\DeclareMathOperator{\spn}{span}
\begin{document}
\theoremstyle{plain}
\newtheorem{thm}{Theorem}[section]
\newtheorem{prop}[thm]{Proposition}
\newtheorem{lem}[thm]{Lemma}
\newtheorem{cor}[thm]{Corollary}
\newtheorem{deft}[thm]{Definition}
\newtheorem{hyp}{Assumption}
\newtheorem*{KSU}{Theorem (Kenig, Sj\"ostrand and Uhlmann)}

\theoremstyle{definition}
\newtheorem{rem}[thm]{Remark}
\numberwithin{equation}{section}
\newcommand{\eps}{\varepsilon}
\renewcommand{\d}{\partial}
\newcommand{\dd}{\mathrm{d}}
\newcommand{\e}{\mathrm{e}}
\newcommand{\re}{\mathop{\rm Re} }
\newcommand{\im}{\mathop{\rm Im}}
\newcommand{\ch}{\mathop{\rm ch}}
\newcommand{\R}{\mathbf{R}}
\newcommand{\C}{\mathbf{C}}
\renewcommand{\H}{\mathbf{H}} 
\newcommand{\N}{\mathbf{N}} 
\newcommand{\D}{\mathcal{C}^{\infty}_0} 
\newcommand{\supp}{\mathop{\rm supp}}
\hyphenation{pa-ra-met-ri-zed}
\title[]{Optimal stability estimates for a Magnetic Schr\"odinger operator with local data}
\author[]{Leyter Potenciano-Machado }
\address{Departamento de Matem\'aticas, Universidad Aut\'onoma de Madrid, Campus de Cantoblanco, 28049 Madrid, Spain}
\email{leyter.potenciano@uam.es}
\begin{abstract}
In this paper we prove identifiability and stability estimates for a local-data inverse boundary value problem for a magnetic Schr\"odinger operator in dimension $n\geq 3$. We assume that the inaccessible part of the boundary is part of a hyperplane. We improve the identifiability result obtained by Krupchyck, Lassas and Uhlmann \cite{KLU} and also derive the corresponding stability estimates. We obtain $\log$-estimates for magnetic  and electric potentials. 



\end{abstract}
\maketitle
\setcounter{tocdepth}{1} 
\tableofcontents



\section{Introduction}\label{introz}

Let $\Omega \subset \mathbb{R}^n $ be an open bounded set with boundary denoted by $\partial\Omega$. We consider the following magnetic Schr\"odinger operator:
\begin{equation}
\mathcal{L}_{A,q}(x, D) := \sum_{j=1}^{n}\left( D_j + A_j(x)  \right)^2 + q(x) = D^2 + A\cdot D + D\cdot A + A^2 +q,
\end{equation}
where $D=-i \nabla$, $A=\left(  A_j \right)_{j=1}^{n}\in L^\infty(\Omega; \mathbb{C}^n) $ is a magnetic potential and $q\in L^{\infty}\left( \Omega; \mathbb{C} \right) $ is an electric potential.  Notice that $D^2=-\Delta$. Here $A^2$ is defined by $A^2=\sum_{k=1}^n A_j^2$. More precisely, we are considering the operator $\mathcal{L}_{A,q}: H^1(\Omega) \rightarrow H^{-1}(\Omega)$  in a weak sense as follows:
\[
\left \langle \mathcal{L}_{A,q} u, v  \right \rangle = \int_{\Omega} Du\cdot \overline{Dv} + A\cdot (Du\overline{v}+u\overline{Dv})+ (A^2+q)u\overline{v},
\]
where $H^{-1}(\Omega)$ denotes the dual space of $H^{1}_0(\Omega)$. The inverse boundary value problem (IBVP) under consideration in this paper is to recover information  (inside $\Omega$)  about the magnetic and electric potentials from measurements on subsets of the boundary. Roughly speaking, we divide the boundary $\partial\Omega$ in two subsets $\Gamma_0$ and $\Gamma:=\partial\Omega\setminus \Gamma_0$. We shall call $\Gamma_0$ the inaccessible part of the boundary and $\Gamma$ the accessible part. Thus, when $\Gamma_0$ or $\Gamma$ are not equal to $\partial\Omega$, we say that  the IBVP has local data. In this case, in a smooth setting over the domain $\Omega$ and the magnetic potential $A$, and by assuming that zero is not a Dirichlet eigenvalue of $\mathcal{L}_{A,q}$, we can define the local Dirichlet-Neumann map (DN map for short) by
\begin{equation}\label{ldnmap1}
\begin{aligned}
\Lambda_{A,q}^\Gamma:& H^{\frac{1}{2}}_\Gamma(\partial \Omega)\rightarrow H^{-\frac{1}{2}}(\partial \Omega)\\
& f \rightarrow (\partial_\nu + iA\cdot \nu)u|_\Gamma.
\end{aligned}
\end{equation}
Here, by abuse of notation $|_\Gamma$ denotes the local trace map of functions in $H^{1}(\Omega)$ onto the accesible part $\Gamma$ of the boundary . The vector $\nu$ is the exterior unit normal of $\partial\Omega$, the set $H_\Gamma^{\frac{1}{2}}(\partial \Omega)$ consists of all $f\in H^{\frac{1}{2}}(\partial \Omega)$ such that  with $\supp f \subset \overline{\Gamma}$, which we describe as ``support constraint on $\Gamma$"; and $u\in H^{1}(\Omega)$ is the unique solution of the following Dirichlet problem:
 \begin{equation}
\label{artif3}
     \begin{cases}
            \mathcal{L}_{A,q}\, u=0  & \text{ in } \Omega \\ u|_{\d \Omega} = f.
     \end{cases}
\end{equation}
In this case, the local boundary data $B_{A,q}^\Gamma$ is given by the graph of $\Lambda^\Gamma_{A,q}$:
\begin{equation}\label{–ki}
B_{A,q}^\Gamma= \left\{  ( u|_{\partial \Omega}, \Lambda^\Gamma_{A,q} (u|_{\partial\Omega}))\in  H^{\frac{1}{2}}_\Gamma(\partial \Omega)\times H^{-\frac{1}{2}}(\partial \Omega)  :  u\in H^{1}(\Omega), \, \mathcal{L}_{A,q}u=0   \right\}.
\end{equation}
The local identifiability problem associated to the magnetic Schr\"odinger equation (\ref{artif3}) can be formulated as follows: Given two magnetic potentials $A_1, A_2$ and two electric potentials $q_1, q_2$ such that $\Lambda^\Gamma_{A_1, q_1}=\Lambda^\Gamma_{A_2, q_2}$, does it follow that $A_1=A_2$ and $q_1=q_2$ in $\Omega$? Unfortunately the answer is no, because there exists a gauge invariance of the local DN map: if $\varphi\in C^1(\overline{\Omega})$ is a real valued function with $\varphi |_{\partial\Omega}=0$, then $\Lambda_{A,q}^{\Gamma}= \Lambda_{A+\nabla \varphi, q}^{\Gamma}$. Hence, for the identifiability problem, we only  expect to prove that the magnetic fields coincide, that is $dA_1=dA_2$, and also the electric potentials coincide, that is $q_1=q_2$. Actually, it was proved by Krupchyk, Lassas and Uhlmann\cite{KLU}, assuming that the inaccessible part of the boundary $\Gamma_0$ is contained in a hyperplane, the magnetic potential $A\in W^{1,\infty}(\Omega)$ and $q\in L^\infty(\Omega)$, see Theorem $1.4$ in \cite{KLU} for more details. Here we consider the magnetic potential $A:=(A^{(1)}, A^{(2)}, \ldots, A^{(n)})$ as a 1-form as follows:
\[
A=\sum_{j=1}^{n}A^{(j)}dx_j \quad \mbox{and} \quad  dA= \sum_{1\leq j<k\leq n}\left( \partial_{x_j}A^{(k)}-\partial_{x_k}A^{(j)}  \right) d_{x_j}\wedge d_{x_k}.
\]

The man in goal of this paper is to improve the previous identifiability result \cite{KLU} and also derive the corresponding stability estimates. A priori,  we are not assuming any smoothness condition over the magnetic potentials and the domain $\Omega$, except that the inaccessible part of the boundary $\Gamma_0$ is also contained in a hyperplane. More precisely, we assume that:
\begin{equation}\label{shapee}
\Omega \subset \left\{x\in \mathbb{R}^n \, : \, x_n>0  \right\} \quad \mbox{and} \quad \Gamma_0 = \partial\Omega \cap \left\{x\in \mathbb{R}^n \, : \, x_n=0  \right\} \neq \emptyset.
\end{equation}
and throughout this paper, unless otherwise indicated, the domain $\Omega$ and the inaccessible part of the boundary $\Gamma_0$ satisfy (\ref{shapee}). Assuming that zero is not a Dirichlet eigenvalue of $\mathcal{L}_{A,q}$ might be unnatural, because to be an eigenvalue depends strongly on the coefficients of the operator $\mathcal{L}_{A,q}$, which at the same time depends on the magnetic potential $A$ and the electric potential $q$, just the unknown parameters from which we want to obtain information. In our case we leave out this eigenvalue assumption. In this lack of smoothness and eigenvalue assumptions, we could not ensure, a priori, the existence and uniqueness of solutions for (\ref{artif3}). As a consequence, the local DN map $\Lambda^\Gamma_{A,q}$ (see \ref{ldnmap1}) is not well-defined and so we need to use instead the local linear map $N^\Gamma_{A,q}$. Also, since the magnetic potentials and  $\Omega$ are not smooth, the local trace map $|_\Gamma$ of functions on the boundary has no sense as defined in (\ref{ldnmap1}) and so we extend its definition by the boundary local map $T_r^\Gamma$. Finally the boundary local data $B^\Gamma_{A,q}$ (see \ref{–ki}) will be replaced by the local Cauchy data set $C^{\Gamma}_{A,q}$, which is defined by
\begin{equation}\label{lcdsetz}
C_{A,q}^\Gamma= \left \{( T_r^\Gamma u  , N_{A,q}^\Gamma (T_r^\Gamma u ) ):  u\in H^1(\Omega, \Gamma),\,  \mathcal{L}_{A,q}u=0 \,\,  \mbox{in}\,\,  \Omega  \right \},
\end{equation}
where, roughly speaking, the space $H^{1}(\Omega, \Gamma)$ denotes all the functions in $H^1(\Omega)$  vanishing on the inaccessible part of the boundary $\Gamma_0$. For expository convenience, the precise definition of the local boundary map $T^\Gamma_r$, the local linear map $N^\Gamma_{A,q}$ and the space $H^1(\Omega, \Gamma)$ will be given in section \ref{prel}.\\


Thus, in our non regular framework, the identifiability problem associated to the magnetic Schr\"odinger equation (\ref{artif3}) can be formulated as follows: Given two magnetic potentials $A_1, A_2$ and two electric potentials $q_1, q_2$ such that $C^\Gamma_{A_1, q_1}=C^\Gamma_{A_2, q_2}$, does it follow that $A_1=A_2$ and $q_1=q_2$ in $\Omega$? Analogously to the local DN map $\Lambda^\Gamma_{A,q}$, the answer is no, because there also exists a gauge invariance of the local Cauchy data sets: if $\varphi$ is a real valued Lipschitz continuous function on $\overline{\Omega}$ with $\varphi |_{\partial\Omega}=0$, then $C_{A,q}^{\Gamma}= C_{A+\nabla \varphi, q}^{\Gamma}$ (see \cite{KU} for full data case). Hence, we only  expect to prove that $dA_1=dA_2$ and $q_1=q_2$. So our identifiability result is stated as follows.

\begin{thm}\label{identibiafgteru}
Let $\Omega\subset \mathbb{R}^n$ be a bounded open set. Let $A_1, A_2\in L^{\infty}(\Omega; \mathbb{C}^n)$ be two magnetic potentials and $q_1, q_2\in L^{\infty}(\Omega;\mathbb{C})$ be two electric potentials. If $C_{A_1,q_1}^\Gamma=C_{A_2, q_2}^\Gamma$ then $dA_1=dA_2$ and $q_1=q_2$ in $\Omega$.
\end{thm}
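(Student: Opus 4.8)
The plan is to prove Theorem \ref{identibiafgteru} by the now-standard Complex Geometric Optics (CGO) method adapted to the local data / reflection setting of Kenig--Sj\"ostrand--Uhlmann, following the strategy of \cite{KLU} but removing the regularity and eigenvalue hypotheses. First I would reduce the problem to the half-space picture dictated by (\ref{shapee}): since $\Gamma_0$ lies in the hyperplane $\{x_n=0\}$, one may extend $\Omega$ by reflection across $\{x_n=0\}$ to a domain $\widetilde\Omega$, and extend the potentials by the even/odd reflection rule that makes a reflected CGO solution on $\widetilde\Omega$ automatically vanish (in the appropriate trace sense) on $\Gamma_0$. Concretely, if $R(x',x_n)=(x',-x_n)$, one sets $\widetilde A^{(j)}(x)=A^{(j)}(x)$ for $x_n>0$ and $=A^{(j)}(Rx)$ for $j<n$, $\widetilde A^{(n)}(x)=A^{(n)}(x)$ for $x_n>0$ and $=-A^{(n)}(Rx)$ for $x_n<0$, and similarly $\widetilde q(x)=q(x)$, $\widetilde q(Rx)$; this makes $\widetilde A,\widetilde q\in L^\infty(\widetilde\Omega)$ and is exactly the reflection that preserves the magnetic Schr\"odinger operator.

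Next I would construct CGO solutions to $\mathcal L_{A,q}u=0$ of the form $u=e^{i\zeta\cdot x}(a+r)$ with $\zeta\in\mathbb C^n$, $\zeta\cdot\zeta=0$, $|\zeta|$ large, where the amplitude $a$ solves a transport equation $\zeta\cdot(\nabla a + iAa)=0$ (or its $L^\infty$-regularized version via mollification of $A$, as in the low-regularity literature of Salo, Haberman--Tataru, etc.) and $r=O(1/|\zeta|)$ is obtained from a Carleman/resolvent estimate for the conjugated Laplacian $e^{-i\zeta\cdot x}(-\Delta)e^{i\zeta\cdot x}$; the absence of an eigenvalue assumption is handled here because the CGO construction only uses interior estimates and never the solvability of the Dirichlet problem. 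The crucial point for local data is to build these solutions on the reflected domain so that, by the symmetry of the reflection, the solution on $\Omega$ has vanishing trace on $\Gamma_0$, i.e. it belongs to $H^1(\Omega,\Gamma)$, so that the hypothesis $C^\Gamma_{A_1,q_1}=C^\Gamma_{A_2,q_2}$ is applicable. Then I would insert two such solutions $u_1$ (for $A_1,q_1$) and $u_2$ (for $-\overline{A_2},\overline{q_2}$, the adjoint problem) into the integral identity that follows from equality of Cauchy data, namely
\begin{equation}\label{intid}
\int_\Omega \big( (A_1-A_2)\cdot(u_1\nabla\overline{u_2}-\overline{u_2}\nabla u_1) - i(A_1^2-A_2^2+q_1-q_2)u_1\overline{u_2}\big)\,dx = 0,
\end{equation}
which holds because the boundary contributions are supported on $\Gamma$ and cancel; this identity must be established carefully at the $H^1$ level using the definitions of $T^\Gamma_r$ and $N^\Gamma_{A,q}$ from section \ref{prel}.

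Passing to the limit $|\zeta|\to\infty$ with the standard choice $\zeta=\zeta(\xi,h)$ (so that the oscillatory factors produce $\widehat{\,\cdot\,}(\xi)$) and using the transport equations to control the $A\cdot\nabla$ terms, the leading order of (\ref{intid}) yields that the Fourier transform of a suitable component of $d(A_1-A_2)$ vanishes on the relevant set of frequencies $\xi$, which by varying $\zeta$ over all admissible directions gives $dA_1=dA_2$ on $\Omega$; a gauge transformation $A_2\mapsto A_2+\nabla\varphi$ then lets one assume $A_1=A_2$ outright, after which the next order term in (\ref{intid}) gives $\widehat{(q_1-q_2)}=0$, hence $q_1=q_2$. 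The main obstacle I anticipate is not the asymptotic analysis but the functional-analytic bookkeeping in the non-smooth, no-eigenvalue setting: making the reflection extension compatible with the weak definitions of $H^1(\Omega,\Gamma)$, $T^\Gamma_r$ and $N^\Gamma_{A,q}$, justifying that CGO solutions lie in the right space and that the boundary pairing in the Cauchy-data identity (\ref{intid}) is well-defined and vanishes on $\Gamma$ despite the lack of a classical normal trace, and ensuring the transport-equation amplitudes are constructed with only $A\in L^\infty$ (which forces a mollification argument and a careful tracking of the resulting error terms in $|\zeta|$). Once those technical points are in place, the extraction of $dA_1=dA_2$ and $q_1=q_2$ proceeds exactly as in the full-data magnetic Calder\'on problem.
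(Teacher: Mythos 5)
Your proposal follows essentially the same route as the paper: reflect $\Omega$ across $\{x_n=0\}$ with the even/odd extension of the potentials, build CGO solutions on the reflected domain using $L^\infty$ mollification and the Salo--Tzou Carleman estimate, form $U=u-u^*$ to enforce $U|_{\Gamma_0}=0$, insert these into the integral identity coming from $C^\Gamma_{A_1,q_1}=C^\Gamma_{A_2,q_2}$, and extract first $dA_1=dA_2$ and then, after a Hodge/gauge reduction, $q_1=q_2$. Two small points to tighten when writing this out: the adjoint problem carries $\overline{A_2}$ (not $-\overline{A_2}$), and the cross products $u_1\overline{u_2^{\,*}}$ and $u_1^*\overline{u_2}$ produced by the reflection have rapidly diverging phase frequencies in the $x_n$ direction, so their vanishing is not a routine leading-order asymptotic but rests on a Riemann--Lebesgue argument together with a careful choice of $\mu_1,\mu_2$ in the style of Isakov.
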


If is well known that, in order to obtain stability estimates for identifiability results, one needs a priori bounds for the unknown parameters in order to control their oscillations. In this way, we introduce the notion of the admissible class of the magnetic and electric potentials, $\mathcal{A}(\Omega, M, s)$ and $\mathcal{Q}(\Omega, M, s)$, respectively. Notice that in a regular framework, the proximity of the local boundary data are naturally given by the norm operator $\left \|  \Lambda^\Gamma_{A_1, q_1}- \Lambda^\Gamma_{A_2, q_2} \right \|_{{H^{1/2}_\Gamma}(\partial\Omega)\rightarrow H^{-1/2}(\partial\Omega)}$. On the contrary, since in our case the local DN map is not well-defined, we introduce a notion of pseudo-distance between local-data Cauchy sets denoted by $\mbox{dist}(\, \cdot\, ,\, \cdot\,)$. Again, for expository convenience, the precise definition of $\mathcal{A}(\Omega, M, s)$, $\mathcal{Q}(\Omega, M, s)$ and $\mbox{dist}(\, \cdot\, ,\, \cdot\,)$ will be given in section \ref{prel}.\\

Finally, for given $E\subset \mathbb{R}^n$ a bounded open set and any function $h: E \rightarrow \mathbb{C}$ (or $\mathbb{C}^n$),  we denote by $\chi_E h$ the extension by zero of $h$ out of $E$. We can now formulate our stability results.
\begin{thm}[Stability for the magnetic field]\label{SMP}
Let $\Omega \subset \mathbb{R}^n$ be a bounded open set. Consider two constants $M>0$ and $s\in \left ( 0,1/2 \right )$. Then there exist $C>0$ (depending on $n, \Omega, M, s, \left \|  q_1\right \|_{L^\infty}, \left \|  q_2\right \|_{L^\infty}$) and an universal constant $\lambda\in \left( 0,1\right)$ such that the following estimate 
\[
\left \|d  (A_1-A_2)   \right \|_{H^{-1}(\Omega)} \leq C \left | \log dist(C_1^\Gamma, C_2^\Gamma) \right |^{ - \frac{\lambda }{n}s^2}, 
\]
holds true for all $\chi_\Omega A_1, \chi_\Omega A_2 \in \mathscr{A}(\Omega, M,s)$ and for all $q_1,q_2\in L^{\infty}(\Omega)$, whenever
\[
dist\, (C_1^\Gamma, C_2^\Gamma)\leq e^{-C}.
\]
\end{thm}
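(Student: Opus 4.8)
The plan is to follow the standard Bukhgeim--Uhlmann / Nakamura--Uhlmann strategy for magnetic Schr\"odinger inverse problems, adapted to local data via a reflection argument across the hyperplane $\{x_n=0\}$, and then to make every step quantitative in order to track the dependence on $\mathrm{dist}(C_1^\Gamma, C_2^\Gamma)$.

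\textbf{Step 1: Reduction via reflection and an integral identity.} Since $\Gamma_0\subset\{x_n=0\}$ is flat, one can reflect $\Omega$ across the hyperplane and extend the coefficients by even/odd reflection (even for the tangential components $A',q$, odd for $A^{(n)}$, with $A_j$ extended by zero outside $\Omega$). The vanishing of $u$ on $\Gamma_0$ together with the matching of local Cauchy data sets $C^\Gamma_{A_1,q_1}=C^\Gamma_{A_2,q_2}$ (resp.\ the smallness of $\mathrm{dist}$) should produce, for any two solutions $u_j$ of $\mathcal{L}_{A_j,q_j}u_j=0$ in the doubled domain, the orthogonality relation
\[
\left| \int_{\widetilde\Omega} \bigl( (A_1-A_2)\cdot(Du_1\,\overline{u_2}+u_1\,\overline{Du_2}) + (A_1^2-A_2^2+q_1-q_2)u_1\overline{u_2}\bigr)\,dx \right| \le \delta\, \|u_1\|_{H^1}\|u_2\|_{H^1},
\]
with $\delta$ controlled by $\mathrm{dist}(C_1^\Gamma,C_2^\Gamma)$ (and $\delta=0$ in the identifiability case, Theorem~\ref{identibiafgteru}).

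\textbf{Step 2: Complex geometrical optics (CGO) solutions with a quantitative remainder.} Build solutions of the form $u_j = e^{\rho_j\cdot x}(a_j + r_j)$ with $\rho_j\in\mathbb{C}^n$, $\rho_j\cdot\rho_j=0$, $|\rho_j|\sim\tau$, where $a_j$ are smooth amplitudes transporting the magnetic potential (solving $\rho_j\cdot\nabla a_j = -i\rho_j\cdot A_j a_j$ type transport equations), and $r_j$ are Carleman-type remainders with the quantitative bound $\|r_j\|_{L^2}\lesssim \tau^{-1}\|A_j\|_{L^\infty}$ (or a mollified version thereof, since $A_j$ is only $L^\infty$). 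Because $A_j\in L^\infty$ only, one must first mollify: replace $A_j$ by $A_j^\eps = A_j * \phi_\eps$, so that $\|A_j^\eps\|_{C^1}\lesssim \eps^{-1}$ and $\|A_j-A_j^\eps\|_{L^2}\lesssim \eps^{s}$ using the admissible-class regularity encoded in $\mathscr{A}(\Omega,M,s)$ (this is exactly why the exponent $s^2/n$ appears). Insert these CGOs into the identity of Step~1 with $\tau$ and $\eps$ as free parameters to be optimized.

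\textbf{Step 3: Extract the magnetic field and optimize.} Choosing $\rho_1,\rho_2$ so that $\rho_1+\overline{\rho_2} = -i\xi$ for a fixed frequency $\xi$, the leading-order term of the integral, after dividing by $\tau$, becomes (up to lower order) a multiple of the Fourier transform of $(\xi\wedge(A_1-A_2))$ or equivalently of $\widehat{d(A_1-A_2)}(\xi)$; the first-order (in $D$) terms produce the curl, and the $A^2+q$ term is pushed to lower order in $\tau$. One then arrives at a bound of the shape
\[
\bigl|\widehat{d(\chi_\Omega A_1 - \chi_\Omega A_2)}(\xi)\bigr| \lesssim \tau\,\delta\, e^{C|\xi|}\big/\text{something} + \tau\,\eps^{s} + \tau^{-1}(1+\|q_1\|_\infty+\|q_2\|_\infty) + \text{(mollification errors)},
\]
valid for $|\xi|\le$ some cutoff $R$. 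Splitting $\|d(A_1-A_2)\|_{H^{-1}}^2 = \int_{|\xi|\le R} + \int_{|\xi|>R}$, bounding the high-frequency part by $R^{-1}$ times the a priori bound $M$, and then optimizing successively in $\eps$ (balancing $\tau\eps^s$ against $\tau^{-1}$, giving $\eps\sim\tau^{-2/s}$), in $\tau$, and finally in $R$ against $|\log\delta|$, yields the claimed $|\log\mathrm{dist}|^{-\lambda s^2/n}$ rate. The exponent $s^2$ reflects the double optimization (once to convert $L^\infty$ data to $H^{-1}$ smallness of the mollified difference at rate $\eps^s$, and once more through the frequency cutoff), and the $1/n$ comes from the volume of the frequency ball in $\mathbb{R}^n$.

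\textbf{Main obstacle.} The delicate point is Step~2 combined with the bookkeeping in Step~3: one needs Carleman estimates for $\mathcal{L}_{A,q}$ with $A\in L^\infty$ on the reflected (merely Lipschitz) domain that are \emph{uniform} in the large parameter $\tau$ and that interact correctly with the mollification scale $\eps$ — in particular the amplitudes $a_j$ must solve the magnetic transport equation with $A_j^\eps$, and the discrepancy $\|A_j - A_j^\eps\|$ must be measured in the norm for which the CGO remainder estimate and the integral identity are compatible (here $H^{-1}$ for the final statement, $L^2$ or $H^{-1}$ for the intermediate identity). Keeping all three parameters $(\tau,\eps,R)$ in play while ensuring each error term is genuinely lower order, and that the final optimization is admissible (i.e.\ the constraints $\eps\to 0$, $\tau\to\infty$, $R\to\infty$ are mutually consistent as $\mathrm{dist}\to 0$), is where the real work lies; everything else is a quantitative rerun of the Krupchyk--Lassas--Uhlmann argument.
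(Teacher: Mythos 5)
Your overall architecture—reflection across $\{x_n=0\}$, CGO solutions with mollified amplitudes, a quantitative integral identity, and a multi-parameter optimization ending with $\tau\sim|\log\mathrm{dist}|$—is the same strategy the paper pursues, and your intuition about where the exponents $s^2$ and $1/n$ come from is essentially correct. However, there are two genuine gaps at the heart of your Step~3, and both are precisely the points that make the local-data problem harder than the full-data case.

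First, you treat the integral identity as if it were directly over the doubled domain with the extended potentials, so that $U_1\overline{U_2}$ immediately gives a single Fourier mode. In fact the identity (Corollary~\ref{coAlid}) is over $\Omega$, and the solutions that vanish on $\Gamma_0$ are differences $U_j(x)=u_j(x)-u_j(x^*)$ of CGO solutions on $\Omega\cup\Omega^*$. When you substitute $U_1\overline{U_2}$, the two \emph{diagonal} products $u_1(x)\overline{u_2(x)}$ and $u_1(x^*)\overline{u_2(x^*)}$ do combine, via the even/odd symmetry of $\widetilde{A_j}$, into the Fourier transform of $d(\chi_{\Omega\cup\Omega^*}\widetilde{A_1}-\chi_{\Omega\cup\Omega^*}\widetilde{A_2})$ at frequency $\xi$; but the two \emph{cross} products $u_1(x)\overline{u_2(x^*)}$ and $u_1(x^*)\overline{u_2(x)}$ oscillate at a physical frequency whose $x_n$-component is $\sim\tau$ (cf.\ (\ref{prs})) and carry no information about $A_1-A_2$. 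Making these decay quantitatively is precisely where the quantitative Riemann--Lebesgue lemma of Heck--Wang (Lemma~\ref{RLe}) is needed—it produces the term $\tau^{s/(s+2)}(e^{-4\pi\epsilon^2\tau^2|\xi'|^2/|\xi|^2}+\epsilon^s)$ in the paper's Fourier estimate and, crucially, requires the admissibility parameter $\epsilon$ as yet another free variable to balance. Your proposal does not mention these cross terms at all, and without an argument to control them the estimate does not close.

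Second, even for the diagonal terms, the leading contribution is the \emph{weighted} Fourier integral
\[
\int_{\mathbb{R}^n}(\overline{\rho_{2,0}}-\rho_{1,0})\cdot\bigl(\chi_{\Omega\cup\Omega^*}(\widetilde{A_1}-\widetilde{A_2})\bigr)e^{i\xi\cdot x}\,e^{\Phi_1+\overline{\Phi_2}}\,dx,
\]
not a multiple of $\widehat{d(A_1-A_2)}(\xi)$: the CGO amplitudes $e^{\Phi_j}$ do not cancel. To strip off $e^{\Phi_1+\overline{\Phi_2}}$ one needs the nonlinear-Fourier-transform identity (Lemma~\ref{KyUh}), stating that if $(i\mu_1+\mu_2)\cdot\nabla\Phi+(i\mu_1+\mu_2)\cdot W=0$ then the weighted and unweighted Fourier integrals of $(i\mu_1+\mu_2)\cdot W$ coincide. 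Without this step, the assertion that "the leading-order term is a multiple of $\widehat{d(A_1-A_2)}(\xi)$" is false. These two ingredients—the quantitative Riemann--Lebesgue estimate for the cross terms, and the nonlinear Fourier identity to remove the amplitude weight—must be supplied before the $(\tau,\epsilon,R)$ optimization can be carried out as you describe.
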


\begin{thm}[Stability for the electric potential] \label{SEP}
Let  $\Omega \subset \mathbb{R}^n$ be a bounded open set. Consider two constants $M>0$ and $s\in \left ( 0,1/2 \right )$. Then there exist $C>0$ (depending on $n, \Omega, M, s$) and an universal constant $\lambda\in \left( 0,1\right)$ such that the following estimate 
\[
\left \| q_1-q_2   \right \|_{L^{2}(\Omega)} \leq C \left |   \log  dist \, (C_1^\Gamma, C_2^\Gamma)   \right |^{ - \frac{\lambda }{n^2}s^3}, 
\]
holds true for all $\chi_\Omega A_1$, $\chi_\Omega A_2 \in \mathscr{A}(\Omega, M,s)$ and for all $\chi_\Omega q_1$, $\chi_\Omega  q_2\in \mathscr{Q}(\Omega, M,s)$, whenever 
\[
dist \, (C_1^\Gamma, C_2^\Gamma) \leq e^{-C}. 
\]
\end{thm}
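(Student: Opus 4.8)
The plan is to derive the stability estimate for the electric potential by bootstrapping from the magnetic field estimate in Theorem \ref{SMP} together with the standard machinery of complex geometrical optics (CGO) solutions adapted to the half-space geometry of \eqref{shapee}. First I would recall that, once $dA_1 = dA_2$ has been controlled in $H^{-1}(\Omega)$ with the quantitative bound of Theorem \ref{SMP}, a gauge transformation (reflecting across the hyperplane $\{x_n=0\}$ and extending by zero, as in \cite{KLU}) allows us to reduce, up to a controlled error, to the case in which the magnetic potentials essentially coincide; the cost of this reduction is a power of $|\log \mathrm{dist}(C_1^\Gamma,C_2^\Gamma)|$, which accounts for the loss from the exponent $s^2$ in Theorem \ref{SMP} to the smaller exponent $s^3$ here. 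Then I would insert CGO solutions $u_j = e^{\rho_j\cdot x}(a_j + r_j)$, with $\rho_1 + \rho_2 = -i\xi$ for the relevant frequency $\xi$ and $|\rho_j|$ of size a large parameter $\tau$, into the integral identity coming from $C_1^\Gamma = C_2^\Gamma$ (approximately), so that the boundary terms on the accessible part $\Gamma$ are estimated by $\mathrm{dist}(C_1^\Gamma, C_2^\Gamma)$ times $e^{C\tau}$, while the interior terms produce $\widehat{(q_1 - q_2)}(\xi)$ plus a magnetic contribution already estimated.

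The key steps, in order, are: (1) establish the stability version of the Runge/reflection argument of \cite{KLU} — extend $\Omega$ by reflection across $\Gamma_0$, continue the coefficients, and build CGO solutions on the doubled domain that vanish on $\Gamma_0$ by an odd/even symmetrization; here one needs Carleman estimates with boundary terms to control the mismatch on $\Gamma$ by $\mathrm{dist}(C_1^\Gamma,C_2^\Gamma)$; (2) write the orthogonality identity: for the chosen CGO pair, $\int_\Omega \big[(A_1 - A_2)\cdot(\text{first order terms}) + (q_1 - q_2 + A_1^2 - A_2^2)\,a_1\overline{a_2}\,e^{i\xi\cdot x}\big]\,dx$ equals a boundary term bounded by $C e^{C\tau}\,\mathrm{dist}(C_1^\Gamma, C_2^\Gamma)$ plus remainder terms that decay in $\tau$; (3) use Theorem \ref{SMP} to absorb the magnetic first-order and $A_1^2-A_2^2$ contributions into an error of size $|\log\mathrm{dist}|^{-c s^2}$; (4) obtain a pointwise-in-$\xi$ bound $|\widehat{(q_1-q_2)}(\xi)| \le C\big(e^{C\tau}\,\mathrm{dist} + \tau^{-1} + |\log\mathrm{dist}|^{-cs^2}\big)$ valid for $|\xi| \le c\tau$; (5) split the $L^2$-norm of $q_1-q_2$ into low and high frequencies, using the a priori bound from $\mathscr{Q}(\Omega,M,s)$ (a Sobolev-type or modulus-of-continuity bound tied to the parameter $s$) to control high frequencies by a power of a cutoff $R$, and the frequency estimate to control low frequencies; (6) optimize over $\tau$ and $R$: choosing $\tau \sim c|\log \mathrm{dist}|$ makes $e^{C\tau}\mathrm{dist}$ summable, and then balancing the remaining $\tau^{-1}$-type term against the high-frequency tail $R^{-\alpha s}$ yields the final exponent $-\frac{\lambda}{n^2}s^3$, where the extra factor $s$ (compared to Theorem \ref{SMP}) and the $n^{-2}$ come from the two successive optimizations and the dimensional dependence in the a priori modulus of continuity.

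I expect the main obstacle to be step (1): making the reflection/Runge approximation argument of \cite{KLU} quantitative without any smoothness on $\Omega$ or on $A$. In the identifiability setting one only needs density of the relevant solution spaces, but for stability one must produce, for each frequency $\xi$ and each parameter $\tau$, an honest CGO solution on the (Lipschitz, possibly irregular) reflected domain that exactly vanishes on $\Gamma_0$, with the remainder $r_j$ satisfying $\|r_j\|_{H^1} \le C\tau^{-1}$ uniformly, and with the boundary pairing on $\Gamma$ genuinely controlled by the pseudo-distance $\mathrm{dist}(\cdot,\cdot)$ rather than by an operator norm that does not exist here. This forces a careful interplay between the Carleman estimate with boundary terms (to gain the $e^{C\tau}$ weight on $\Gamma$ and nothing worse) and the definition of $\mathrm{dist}$, $N^\Gamma_{A,q}$, and $T^\Gamma_r$ from Section \ref{prel}. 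Once this is in place, steps (2)--(6) are the familiar Alessandrini-type interpolation between an exponentially growing bound in $\tau$ and an a priori polynomial bound, and I would carry them out by the standard optimization that turns such a dichotomy into a logarithmic modulus of continuity, keeping careful track of how $s$ enters the a priori classes so as to recover the stated powers $s^3$ and $n^{-2}$.
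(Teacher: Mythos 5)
Your high-level plan is largely aligned with the paper's strategy: reuse the reflected CGO solutions from the magnetic step, invoke gauge invariance of the Cauchy data sets, isolate $\widehat{q_1-q_2}(\xi)$ via an Alessandrini-type identity, and close by a low/high frequency split and a $\tau$--$R$ optimization. However, two concrete gaps separate your sketch from a working proof.

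First, you do not name the mechanism that makes ``reduce to the case in which the magnetic potentials essentially coincide'' precise and quantitative. The paper's key new tool here is a Hodge decomposition (Lemma \ref{hdecom}): one writes $\chi_{\Omega\cup\Omega^*}(\widetilde{A}_1-\widetilde{A}_2)=\nabla\psi+\text{(curl-controlled remainder)}$ on a ball $B\supset\supset\overline{\Omega\cup\Omega^*}$, with $\|\chi_{\Omega\cup\Omega^*}(\widetilde{A}_1-\widetilde{A}_2)-\nabla\psi\|_{L^2(B)}\lesssim\|d(\chi_{\Omega\cup\Omega^*}(\widetilde{A}_1-\widetilde{A}_2))\|_{H^{-1}(B)}$, and then uses a cutoff $\varphi=\chi(\psi-\underline{\psi})$ (so $\varphi|_{\partial B}=0$) as the gauge function. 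It is precisely the $H^{-1}$ norm of $d(\widetilde{A}_1-\widetilde{A}_2)$ that Theorem \ref{SMP} controls, so the Hodge decomposition is the hinge that turns the magnetic-field estimate into a bound on the first-order and $A_1^2-A_2^2$ terms. Without identifying this tool, your step (3) remains a heuristic: there is no obvious way to ``absorb the magnetic first-order contributions'' into an error of size $|\log\mathrm{dist}|^{-cs^2}$ starting only from the curl being small in $H^{-1}$.

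Second, you posit Carleman estimates with boundary terms as the mechanism to gain the $e^{C\tau}$ weight on $\Gamma$, and you flag step (1) — the quantitative reflection argument — as the main obstacle. Neither matches the paper. Boundary control comes purely from the integral identity of Lemma \ref{Alid}/\ref{coAlid} combined with the pseudo-distance $\mathrm{dist}(\cdot,\cdot)$ and the $H^1$ growth bound $\|U_j\|_{H^1(\Omega)}\lesssim e^{\tau\kappa}$; there is no Carleman estimate acting on boundary traces. Moreover, the reflected CGO solutions vanishing on $\Gamma_0$ (Proposition \ref{construcsolutions}) are already in hand from Section $3$, so the genuinely new difficulty in the electric-potential step is not the reflection argument but making the gauge-invariance/Hodge-decomposition step quantitative and compatible with the Riemann--Lebesgue estimates on the cross-terms (Proposition \ref{electyc}). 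Your outline would benefit from re-centering on that step.
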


To the best of our knowledge, the theorems \ref{identibiafgteru}, \ref{SMP} and \ref{SEP} are the first results concerning to the identifiability problem and its corresponding stability estimates for this IBVP with local data (associated to (\ref{artif3})) without any smoothness condition over the boundary $\partial\Omega$, except the assumption on the inaccessible part. The proofs of these theorems will be carried out by using solutions having the vanishing condition on $\Gamma_0$. These solutions can be constructed by combining the solutions constructed in \cite{KU} and a reflection argument across $\Gamma_0$, here is where the shape of $\Omega$ plays a crucial role. The original idea of reflecting goes back to Isakov's work on identifiability for a Schr\"odinger operator in the absence of a magnetic potential \cite{Is}. To obtain our estimate for the magnetic potential we will use an integral identity (see Lemma \ref{Alid}) in order to isolate the difference of the magnetic potentials $A_1-A_2$, then by using a quantitative version of the Riemann--Lebesgue lemma and Fourier transform we prove Theorem \ref{SMP}. The stability estimate for the electric potentials will be proved by using an analogous integral identity in order to isolate now the difference of the electric potentials $q_1-q_2$ and some extra arguments. These are the Hodge decomposition derived in \cite{CP} and the gauge invariance of the Cauchy data sets in order to use the already established stability estimate for the magnetic potentials in Theorem \ref{SMP}. The Besov spaces have been introduced in section $2$ in order to control oscillations for the magnetic and electric potentials, as required in this ill-posed inverse problem. We also observe that our results imply $\log$-estimates for both magnetic and electric potentials.  This is the best stability modulus that one can expect as was proved by Mandache \cite{Ma} in the context of the DN map (\ref{ldnmap1}) in the absence of a magnetic potential and for full data case without the support constraint on $\partial\Omega$, that is when $\Gamma=\partial\Omega$ and $\Gamma_0=\emptyset$.\\

Inverse boundary value problem with global and local data have been studied by several authors. When the boundary $\partial\Omega$ is smooth enough and for full data case, Sun proved identifiability by assuming the smallness of  the magnetic potential in a suitable space \cite{Sun}. In \cite{NSU} the smallness was removed for $C^2$ and compactly supported magnetic potential and $L^\infty$ electric potential. Stability estimates for these full data results were obtained by Tzou in \cite{Tz}. The previous identifiability results were extended by Krupchyk and Uhlmann \cite{KU} for both magnetic and electric potentials in $L^{\infty}$ without any smoothness assumption over the boundary $\partial\Omega$, that is, in the context of $N^\Gamma_{A,q}$ for full data case without the support constraint on $\Gamma$, that is when $\Gamma=\partial\Omega$ and $\Gamma_0=\emptyset$. For this case, stability estimates were obtained by Caro and Pohjola \cite{CP}. In all previous cases, $\log$-estimates were obtained for both magnetic and electric potentials.\\

Now we turn to mention the results about the IBVP with local data when the boundary $\partial\Omega$ is smooth enough, that is taking into account the local DN map (\ref{ldnmap1}). In this case, the first identifiability result in the absence of a magnetic potential ($A\equiv 0$) was obtained by Isakov \cite{Is}. He proved that if $\Lambda_{0,q_1}^\Gamma=\Lambda_{0,q_2}^\Gamma$ then $q_1=q_2$ assuming  that the inaccessible part of the boundary is either part of a plane or a sphere. The corresponding stability estimate to Isakov's result was derived by Heck and Wang \cite{HW1}, obtaining a $\log$-estimate. Following similar ideas as in \cite{Is}, Krupchyk, Lassas and Uhlmann \cite{KLU} extended Isakov's identifiability result to the Schr\"odinger operator in the presence of a magnetic potential. Similar arguments were employed by Caro \cite{Ca11} to study an IBVP with local data for the Maxwell equation under the same flatness condition on $\Gamma_0$. Caro also obtained a $log$-stability estimate.\\

As far we know, there are no results about identifiability to an IBVP with local data for an arbitrary open set. Nevertheless, the analogous of Calder\'on's result for the linearization of the DN map in the case of a conductivity, has been proved by Dos Santos Ferreira, Kening, Sj\"ostrand and Uhlmann \cite{DSFKSU0}, in the case of local data for the Schr\"odinger equation in the absence of a magnetic potential.\\

We also mention that there is another kind of IBVP, the called IBVP with partial data. That is, roughly speaking, the measurements of the DN map are taken on subsets of the boundary for functions supported on the complement of these subsets. Results concerning stability estimates for these kind of IBVP are worse than our estimates (IBVP with partial data), because the first ones have $\log\log$-modulus of continuity while we have just one $\log$, see theorems \ref{SMP} and \ref{SEP}. 
In turn, we distinguish two kind of IBVP which depend of how we are seeing the domain $\Omega$. The first one is described as illuminating $\Omega$ from the infinity because we see the domain from a fixed $\xi\in S^{n-1}$. In the absence of a magnetic potential, identifiability result was obtained by Bukgheim and Uhlmann \cite{BU}. The corresponding stability estimates (for different constraints over the functions where the DN map were taken) were derived by  Heck and Wang \cite{HW} and Caro, Dos Santos and Ruiz \cite{CDSFR}. In the presence of a magnetic potential, identifiability and stability (from different constraints over the functions where the DN map were taken) were obtained by Tzou \cite{Tz} and Potenciano \cite{Po}. The second kind of IBVP is describe as illuminating $\Omega$ from a fixed point $x_0\in \mathbb{R}^n$. In this case and in the absence of a magnetic potential, identifiability was proved by Kening, Sjostrand and Uhlmann \cite{KSU} and the corresponding stability estimates were derived by Caro, Dos Santos and Ruiz  \cite{CDSFR}-\cite{CDSFR1}. In the presence of a magnetic potential, identifiability was proved by Dos Santos, Kening, Sjostrand and Uhlmann \cite{DSFKSjU}. This result was extended by Chung \cite{Ch}. For these two last cases, the corresponding stability estimates are still open.\\

Throughout this paper we denote by $C_i$ ($i\in \mathbb{Z}^+$) a positive constants which might change from formula to formula. These constants depend only on $n, \Omega$ and the priori bounds for magnetic and electric potentials.\\

This paper is organized as follows. In the section $2$ we give the precise definition and notations employed to state theorems \ref{identibiafgteru}, \ref{SMP} and \ref{SEP} . In section $3$ we prove Theorem \ref{SMP}. In the section $4$ we prove Theorem \ref{SEP}. In the section $5$ we prove Theorem \ref{identibiafgteru}.

\section{Preliminaries - Definitions and notations}\label{prel}

As we already mentioned in the section \ref{introz}, here we introduce the definitions of the local boundary map $T_r^\Gamma$, the local linear map $N^\Gamma_{A,q}$, the space $H^1(\Omega, \Gamma)$,  the admissible class of the magnetic potentials $\mathcal{A}(\Omega, M, s)$, the admissible class of the electric potentials $\mathcal{Q}(\Omega, M, s)$ and the pseudo-distance between local-data Cauchy sets $\mbox{dist}(\, \cdot \, , \, \cdot \,)$. The space $H^1(\Omega, \Gamma)$ is defined by density as 
\begin{equation}\label{completation}
H^1(\Omega, \Gamma):= \overline{ \left\{  u\in C^\infty(\overline{\Omega}) \, : \, u|_{\Gamma_0}=0  \right\}}^{H^1(\Omega)},
\end{equation}
where $\overline{E}^{H^1(\Omega)}$ denotes the closure of the set $E$ in the topology of $H^1(\Omega)$, whith the norm $H^1(\Omega)$ restricted to $E$.\\
Thus, we define the local boundary map as
\begin{equation}\label{localW1}
T_r^\Gamma: H^1(\Omega, \Gamma)\rightarrow H^1(\Omega, \Gamma)/ H^{1}_0(\Omega), \quad T_r^\Gamma u = \left [ u \right ],
\end{equation}
where $\left[ u \right]$ denotes the equivalence class of $u\in H^{1}(\Omega, \Gamma)$ in the quotient space $H^1(\Omega, \Gamma)/ H^{1}_0(\Omega)$.\\
The local linear map $N_{A,q}^\Gamma: H^1(\Omega, \Gamma)/ H^{1}_0(\Omega) \rightarrow \left(H^1(\Omega, \Gamma)/ H^{1}_0(\Omega)\right)^*$ is defined by
\begin{equation}\label{localW}
\left \langle N_{A,q}^\Gamma \left [ u \right ], \left [ g \right ]  \right \rangle =  \int_{\Omega} Du\cdot \overline{Dv} + A\cdot (Du\overline{v}+u\overline{Dv})+ (A^2+q)u\overline{v},
\end{equation}
for all $u\in H^1(\Omega, \Gamma)$ satisfying $\mathcal{L}_{A,q}u=0$  (in $\Omega$) and for any  $v\in \left [ g \right ]$ with $g\in H^1(\Omega, \Gamma)$. Observe that $N_{A,q}^\Gamma$ makes sense for equivalence class of functions $u\in H^1(\Omega)$ which come from the solutions of $\mathcal{L}_{A,q}u=0$. With these definitions and notations, the local Cauchy data set can also be written as 
\begin{equation}\label{lcdset}
C_{A,q}^\Gamma= \left \{(\left [ u \right ], N_{A,q}^\Gamma \left [ u \right ] ):  u\in H^1(\Omega, \Gamma),\,  \mathcal{L}_{A,q}u=0 \,\,  \mbox{in}\,\,  \Omega  \right \},
\end{equation}
see (\ref{lcdsetz}). Following \cite{CP}, we introduce the  pseudo-distance $\mbox{dist}(\, \cdot \, , \, \cdot \,)$, inspired in the Hausdorff distance, as follows. Let $A_1, A_2\in L^{\infty}(\Omega; \mathbb{C}^n)$ be two magnetic potentials and let $q_1,q_2\in L^{\infty}(\Omega)$ be two electric potentials. Given $(f, g)\in C^\Gamma_{A_j,q_j}$ with $j=1,2$, we set
\begin{align*}
&I( (f ,g ); C^\Gamma_{A_k,q_k} )\\
&\qquad  = \underset{(f_k,g_k)\in  C^\Gamma_{A_k,q_k} }{\inf} \left [  \left \| f-f_k \right \|_{H^1(\Omega,\Gamma)/ H^1_0(\Omega)} + \left \| g-g_k \right \|_{(H^1(\Omega, \Gamma)/ H^1_0(\Omega))^*}  \right ],
\end{align*}
with $k=1,2$. Then the pseudo-distance between $C_{A_1, q_1}^\Gamma$ and $C_{A_2,q_2}^\Gamma$ is defined by
\begin{equation}\label{distanciaa}
 \mbox{dist} (C_{A_1, q_1}^\Gamma, C_{A_2,q_2}^{\Gamma}  ) = \underset{\underset{j\neq k}{j,k=1,2}}{\max}  \underset{   \underset{ \left \| f_j \right \|_{H^1(\Omega,\Gamma)/ H^1_0(\Omega)}=1 }{(f_j,g_j)\in C_{A_j, q_j}^\Gamma}    }{\sup} I( (f_j,g_j); C^\Gamma_{A_k,q_k} ).
\end{equation}

We now turn to the notations and definitions concerning our stability results: Theorems \ref{SMP} and \ref{SEP}. As we have already mentioned, in order to obtain stability estimates we have to assume a priori bounds for the magnetic and electric potentials in a suitable spaces. These are in order to control the behavior of the oscillations. Thus, following \cite{CP}, for given $s>0$ we define the Besov space $B_s^{2,\infty} (\mathbb{R}^n;\mathbb{C})$ as the space consisting of all functions $f \in L^{2}(\mathbb{R}^n; \mathbb{C})$ for which the norm
\begin{equation}\label{Besovspacet}
\left \| f\right \|_{B_s^{2,\infty} (\mathbb{R}^n)} =  \left \| f \right \|_{L^2(\mathbb{R}^n)} + \underset{y\in \mathbb{R}^n\setminus \left \{ 0 \right \}   }{\sup} \dfrac{ \left \| f(\cdot +y) - f(\cdot) \right \|_{L^2(\mathbb{R}^n)}   }{\left | y \right |^s} 
\end{equation}
is finite. Note that the Besov space has been defined for scalar functions. The definition for vectorial functions are similar.\\

Thus, given $M>0$ and $s\in \left( 0,1/2 \right)$, we define the class of admissible magnetic potentials as
\[
\mathscr{A}(\Omega, M, s )= \left \{ F  \in  L^\infty\cap B^{2,\infty}_s(\mathbb{R}^n; \mathbb{C}^n)  :  \supp F \subset \overline{\Omega}, \, \left \| F \right \|_{L^\infty\cap B^{2,\infty}_s}  \leq M \right \},
\]
and the class of admissible electric potentials as
\[
\mathscr{Q}(\Omega, M, s)= \left \{ G  \in  L^\infty\cap B^{2,\infty}_s(\mathbb{R}^n; \mathbb{C})  :  \supp G \subset \overline{\Omega}, \,  \left \|  G \right \|_{L^\infty\cap B^{2,\infty}_s}   \leq M \right \},
\]
where $\left \|\, \cdot \, \right \|_{L^\infty\cap B^{2,\infty}_s}$ denotes the sum of the norms $\left \|\, \cdot \, \right \|_{L^\infty(\mathbb{R}^n)} + \left \|\, \cdot \, \right \|_{ B^{2,\infty}_s(\mathbb{R}^n)}$.
\begin{rem}
Note that in theorems \ref{SMP} and \ref{SEP}, we assume the extensions by zero of the corresponding potentials to be in the Besov space $B^{2,\infty}_s$ with $s\in \left(0, 1/2\right)$. We mention that this is true for instance for Lipschitz domains, that is when $\partial\Omega$ is locally defined by the graph of a Lipschitz function, see \cite{Tr}. In this regularity framework over $\Omega$ it was also proved by Faraco and Rogers that $\chi_\Omega$ belongs to $H^{1/2-\epsilon}(\mathbb{R}^n)$ for any $\epsilon>0$ small enough, see \cite{FRo} for more details. This is the main motivation of why  in Theorems \ref{SMP} and \ref{SEP} we have taken the exponent $s$ of the Besov spaces on $\left( 0, 1/2\right)$.
\end{rem}

\section{Stability for the magnetic field}\label{sde}

\subsection{CGO solutions for a magnetic Schr\"odinger operator}\label{mollification}
The main result in this section is Theorem \ref{Exso}. This theorem ensures the existence of solutions for the magnetic Schr\"odinger operator $\mathcal{L}_{A,q}$.
\begin{thm}\label{Exso}
Let $V\subset \mathbb{R}^n$ be a bounded open set. Consider $s\in \left(0,1/2 \right)$. Let $A\in L^\infty\cap B^{2,\infty}_s(\mathbb{R}^n;\mathbb{C}^n)$ and $q\in L^\infty(\mathbb{R}^n;\mathbb{C})$ such that $\supp A\subset \overline{V}$ and $\supp q\subset \overline{V}$. Consider $\rho\in \mathbb{C}^n$ such that $\rho\cdot \rho=0$ and $\rho=\rho_0+\rho_\tau$ with $\rho_0$ being independent of some large parameter $\tau>0$, $\left | \Re \rho_0 \right |= \left | \Im \rho_0 \right |=1$ and $\rho_\tau=\mathcal{O}(\tau^{-1})$ as $\tau \mapsto \infty$. Then there exist two positive constants $C$ and $\tau_0$ (both depending on $n, V, s, \left \| A \right \|_{L^\infty\cap B^{2,\infty}_s}, \left \| q \right \|_{L^\infty}$)  ; and a solution $u\in H^{1}(V)$ to the equation $\mathcal{L}_{A,q}\,u=0$ in $V$ of the form
\[
u(x,\rho;\tau)=e^{\tau \rho\cdot x} \left(   e^{\Phi^\sharp(x,\rho_0;\tau)} +r(x,\rho; \tau)  \right)
\]
with the following properties:
\begin{item}
\item[(i)] The function $\Phi^\sharp(\cdot, \rho_0; \tau)  \in C^{\infty}(\mathbb{R}^n)$ and satisfies for all $\alpha\in \mathbb{N}^n$
\begin{equation}\label{pruno}
\left \| \partial^{\alpha} \Phi^\sharp(\cdot, \rho_0; \tau) \right \|_{L^\infty(\mathbb{R}^n)}  \leq C \tau^{\left | \alpha \right |/(s+2)} \left \| A \right \|_{L^{\infty}(\mathbb{R}^n)}, \quad \tau\geq \tau_0.
\end{equation}
\item[(ii)] The function $r(\cdot, \rho_0; \tau)\in H^{1}(V)$ and satisfies
\begin{equation}\label{prdos}
\left \| \partial^{\alpha} r(\cdot, \rho_0; \tau) \right \|_{L^2(V)}\leq C\,  \tau^{\left | \alpha \right |-s/(s+2)}, \quad \left |\alpha  \right |\leq 1.
\end{equation} 
\item[(iii)] If we define by $\kappa:=\underset{x\in \overline{V}}{\sup }\left | x \right |$ then the solution $u$ satisfies 
\begin{equation}\label{–zpq1234}
\left \| u \right \|_{H^1(V)}\leq C\, e^{\tau \kappa   \left | \rho \right |  }.
\end{equation}
\end{item}
Moreover, if we denote by  $\Phi(\cdot; \rho_0)= (\rho_0\cdot \nabla)^{-1} (-i\rho_0\cdot A)\in L^{\infty}(\mathbb{R}^n)$ the function satisfying the following equation in $\mathbb{R}^n$
\begin{equation}\label{mmmmm}
\rho_0\cdot \nabla \Phi + i\rho_0\cdot A=0
\end{equation}
then
\begin{equation}\label{prtres}
\left \| \Phi (\cdot; \rho_0)\right \|_{L^\infty(\mathbb{R}^n)} \leq C \left \|A  \right \|_{L^{\infty}(\mathbb{R}^n)}.
\end{equation}
Finally,  for every $\chi\in C^{\infty}_0(\mathbb{R}^n)$ we have
\begin{equation}\label{prcuatro}
\left \| \chi (\Phi^\sharp(\cdot, \rho_0;\tau)- \Phi(\cdot; \rho_0)) \right \|_{L^2(\mathbb{R}^n)}\leq C \tau^{-s/(s+2)} \left \| A  \right \|_{L^\infty(\mathbb{R}^n)},
\end{equation}
where the constant $C$ also depends on $\chi$.
\end{thm}
\begin{rem}
 This theorem is a summary of two known results. On one hand, the existence of $u\in H^1(V)$ satisfying $\mathcal{L}_{A,q}u=0$ in $V$ (with $A\in L^\infty$ and $q\in L^\infty$) was proved by Krupchyk and Uhlmann in \cite{KU}. On the other hand, when $A\in L^{\infty}\cap B^{2,\infty}_s$ and $q\in L^\infty$, the corresponding estimates for $ \Phi^\sharp(\cdot, \rho_0; \tau), \Phi(\cdot; \rho_0)$ and $r(\cdot, \rho_0;\tau)$ have been taken from Proposition $2.6$ in \cite{KU} and the section $3$ in \cite{CP}. For these reasons we only give the main ideas of the proof with the repetition of the relevant material from \cite{CP} and \cite{KU}, thus making our exposition self-contained.
\end{rem}
\begin{proof}
From \cite{KU} it follows that  for all $A\in L^{\infty}(V)$ and for all $q\in L^\infty(V)$ there exists a function $u\in H^1(V)$ satisfying $\mathcal{L}_{A,q}u=0$ in $V$ of the form
\begin{equation}\label{ansatz}
u(x)= e^{\tau \rho\cdot x}(a+r),
\end{equation}
where $\rho\in \mathbb{C}^n$ with $\rho\cdot \rho=0$, $\tau$ is a large positive parameter, $a$ is a smooth function solving a transport equation (see (\ref{transportequation})) and $r$ satisfies a remainder equation (see (\ref{remainderterm})). Their construction involves basically two arguments. The first argument concerns a mollification procedure for the magnetic potential $A$. More precisely: consider $\varphi \in C^{\infty}_0(\mathbb{R}^n)$ such that $0\leq \varphi \leq 1$ and $\supp \varphi \subset \overline{B_1(0)}$, where $\overline{B_1(0)}$ denotes the closure of the  ball in $\mathbb{R}^n$ of radius 1 centered at the origin. For each $\delta>0$ we define $\varphi_\delta(x)=\delta^{-n}\varphi(x/\delta)$ and set $A^\sharp_{\delta}= A*\varphi_\delta$ which belongs to $C^{\infty}_0(\mathbb{R}^n; \mathbb{C}^n)$. Then there exists a positive constant $C_1>0$ (depending on $V$ and $n$) such that 
\begin{equation}\label{Asharp}
\left \| A-A^{\sharp}_\delta  \right \|_{L^2(\mathbb{R}^n)} \leq C_1\, \delta^{s} \left \| A \right \|_{B^{2,\infty}_s(\mathbb{R}^n)}
\end{equation}
and for each $\alpha \in \mathbb{N}^n$ we have
\begin{equation}\label{Asharp1}
\left \| \partial^{\alpha} A^\sharp_\delta \right \|_{L^\infty(\mathbb{R}^n)} \leq C_2\, \delta^{-\left | \alpha \right |} \left \| A \right \|_{L^{\infty}(\mathbb{R}^n)},
\end{equation}
see the section $3$ in \cite{CP} for more details. The second argument involves the use of a Carleman estimate for the Laplacian operator derived by Salo and Tzou \cite{ST}. This Carleman estimate are between $H^1(V)$ and its dual space $H^{-1}(V)$, that means the gain of two derivatives. This last fact  is the main tool to the construction of solutions of the form (\ref{ansatz}). More precisely, an easy computation gives us that a function $u\in H^{1}(V)$ of the form (\ref{ansatz}) is a solution for $\mathcal{L}_{A,q}u=0$ if we have the following identity in $H^{-1}(V)$
\begin{align*}
&0= \tau^{-2}\mathcal{L}_{A,q}a -\tau^{-1} \left( 2i\rho_\tau\cdot Da + 2i\rho_0\cdot (A-A^\sharp_\delta)a +2i\rho_\tau\cdot A a  \right)\\
&\qquad  -\tau^{-1}\left( 2i\rho_0\cdot Da+2i \rho_0\cdot A^\sharp_\delta a \right) + e^{-\tau\rho\cdot x}\tau^{-2}\mathcal{L}_{A,q}(e^{\tau\rho\cdot x} r).
\end{align*}
Then $u\in H^{1}(V)$ of the form (\ref{ansatz}) is a solution of $\mathcal{L}_{A,q}u=0$ provided that  $a$ satisfies the following equation in $\mathbb{R}^n$
\begin{equation}\label{transportequation}
\rho_0\cdot \nabla a + i\rho_0\cdot A^\sharp_\delta a=0 
\end{equation}
and the term $r$ satisfies the following equation in $H^{-1}(V)$ 
\begin{equation}\label{remainderterm}
\begin{aligned}
&e^{-\tau\rho\cdot x}\tau^{-2}\mathcal{L}_{A,q} (e^{\tau\rho\cdot x}r)\\
&= -\tau^{-2}\mathcal{L}_{A,q}a +2i\tau^{-1}\left(\rho_1\cdot Da + \rho_0\cdot (A-A^\sharp_\delta)a+\rho_1\cdot A a  \right).
\end{aligned}
\end{equation}
The equation (\ref{transportequation}) and (\ref{remainderterm}) are called transport and remainder equations, respectively. The transport equation (\ref{transportequation}) can be solved as follows. If we make the ansatz $a=e^{\Phi^{\sharp}}$ then $\Phi^\sharp$ satisfies the equation
\begin{equation}\label{xiequation}
\rho_0\cdot \nabla \Phi^{\sharp} + i\rho_0\cdot A^\sharp_\delta =0.
\end{equation}
This equation is easy to solve because the condition $\rho_0\cdot \rho_0=0$ imply that $\Re \rho_0 \cdot \Im \rho_0=0$ and $\left | \Re \rho_0 \right |= \left | \Im \rho_0 \right |$ and then the operator $\rho_0\cdot \nabla$ becomes a $\partial_{\overline{z}}$ operator, where for each $x\in \mathbb{R}^n$ we have considered the complex variable $z(x)=\Re\rho_0\cdot x + i\Im\rho_0\cdot x$. Thus, the function $\Phi^{\sharp}=(\rho_0\cdot \nabla)^{-1}(-i\rho_0\cdot A^{\sharp}_\delta)$ belongs to $C^\infty(\mathbb{R}^n)$ and satisfies (\ref{xiequation}). Moreover, from (\ref{Asharp1}) we have that for all $\alpha\in \mathbb{N}^n$ there exists a constant $C_3>0$ such that
\begin{equation}\label{phi}
\left \| \partial^{\alpha} \Phi^\sharp \right \|_{L^\infty(\mathbb{R}^n)}  \leq C_3\, \delta^{-\left | \alpha \right |} \left \| A \right \|_{L^{\infty}(\mathbb{R}^n)}.
\end{equation}
For more details about the solvability of (\ref{xiequation}), see for example Lema $4.6$ in \cite{SaM}.
For similar reasons as above, the function $\Phi(\cdot; \rho_0)= (\rho_0\cdot \nabla)^{-1} (-i\rho_0\cdot A)\in L^{\infty}(\mathbb{R}^n)$ solves the following equation
\[
\rho_0\cdot \nabla \Phi + i\rho_0\cdot A=0.
\]
Moreover we have the estimate
\[
\left \| \Phi(\cdot; \rho_0) \right \|_{L^\infty(\mathbb{R}^n)} \leq C_4 \left \|A  \right \|_{L^{\infty}(\mathbb{R}^n)},
\]
where the constant $C_4>0$ only depend on $V$ and $n$. Also from (\ref{Asharp}), for every $\chi\in C^{\infty}_0(\mathbb{R}^n)$ there exist a constant $C_5>0$ (depending on $\Omega, n$ and $\chi$) such that
\[
\left \| \chi (\Phi^\sharp(\cdot, \rho_0)- \Phi(\cdot; \rho_0)) \right \|_{L^2(\mathbb{R}^n)}\leq C_5\, \delta^{s} \left \| A  \right \|_{B^{2,\infty}_s(\mathbb{R}^n)},
\]
see the section $3$ in \cite{CP} for more details. Now we explain the solvability of the remainder equation (\ref{remainderterm}). We start by setting 
\[
w= -\tau^{-2}\mathcal{L}_{A,q}a +2i\tau^{-1}\left(\rho_\tau\cdot Da + \rho_0\cdot (A-A^\sharp_\delta)a+\rho_\tau\cdot A a  \right).
\]
Then by Proposition $2.3$ in \cite{KU}, there exists $r\in H^1(V)$ a solution of (\ref{remainderterm}) and two positive constants $C_6$ and $\tau_0$ such that  
\begin{equation}\label{erre}
\left \| r \right \|_{H^1_{scl}(V)}\leq C_6\, \tau \left \| w \right \|_{H^{-1}_{scl}(V)},
\end{equation}
for all $\tau\geq \tau_0$. Here the semi-classical norms are defined by
\[
\left \| r \right \|^2_{H^1_{scl}(V)}= \left \| r \right \|^2_{L^2(V)}+  \left \|\tau^{-1} \nabla r \right \|^2_{L^2(V)},
\]
\[
\left \| w \right \|_{H^{-1}_{scl}(V)}= \underset{0\neq \phi \in C^\infty_0(V)}{\sup} \dfrac{\left \langle  w,\phi \right \rangle_{L^2(V)}}{\left \| w \right \|_{H^1_{scl}(V)}}.
\]
If we define $\kappa:=\underset{x\in \overline{V}}{\sup }\left | x \right |$ then from (\ref{phi}) and by  taking $\delta=\tau^{-{1}/{(s+2)}}$ into (\ref{Asharp1}), we get
\begin{align*}
&\left \| w \right \|_{H^{-1}_{scl}(V)} \leq C_7\,  e^{\kappa\left \| A \right \|_{L^\infty}} \tau^{-(2s+2)/(s+2)}\\
&\qquad \qquad \qquad   \times  \left(  1+ \left \| A \right \|_{L^\infty}  + \left \| A \right \|^2_{L^\infty} + \left \| q \right \|_{L^\infty} + \left \| A \right \|_{B_s^{2,\infty}}  \right).
\end{align*}
Combining the above inequality with (\ref{erre}) we obtain
\begin{equation}\label{restww}
\begin{aligned}
& \left \| r \right \|_{H^1_{scl}(V)} \leq C_8\, e^{\kappa\left \| A \right \|_{L^\infty}} \tau^{-s/(s+2)}\\
&\qquad  \qquad   \times  \left(  1+ \left \| A \right \|_{L^\infty}  + \left \| A \right \|^2_{L^\infty} + \left \| q \right \|_{L^\infty} + \left \| A \right \|_{B_s^{2,\infty}}  \right)
\end{aligned}
\end{equation}
and by similar kind of computations as above we obtain
\begin{equation}\label{uac}
\begin{aligned}
& \left \| u \right \|_{H^1(V)}\leq C_9\, e^{\tau \kappa   \left | \rho \right |  }e^{C\left \| A \right \|_{L^\infty(V)}}\\
& \qquad  \qquad   \times    \left(  1+ \left \| A \right \|_{L^\infty}  + \left \| A \right \|^2_{L^\infty} + \left \| q \right \|_{L^\infty} + \left \| A \right \|_{B_s^{2,\infty}}  \right).
\end{aligned}
\end{equation}
So these are the main ideas of the proof.
\end{proof}
\begin{rem}[Estimates for the identifiability result]\label{remesbaxzq}
For our identifiability result, see Theorem \ref{identibiafgteru}, we are assuming that $A\in L^{\infty}(\Omega; \mathbb{C}^n)$. In this case we can obtain a similar estimates as (\ref{pruno})-(\ref{prcuatro}). These estimates are enough to prove Theorem \ref{identibiafgteru} (see Proposition $2.6$ in \cite{KU}) and can be stated as follows. There exist two positive constants $C$ and $\tau_0$ such that: the function $\Phi^\sharp(\cdot, \rho_0; \tau)  \in C^{\infty}(\mathbb{R}^n)$ and satisfies for all $\alpha\in \mathbb{N}^n$ and for all $ \lambda\in \left(0,1/2\right)$
\begin{equation}\label{prunozxa}
\left \| \partial^{\alpha} \Phi^\sharp(\cdot, \rho_0; \tau) \right \|_{L^\infty(\mathbb{R}^n)}  \leq C \tau^{\lambda \left | \alpha \right |}, \quad \tau\geq \tau_0.
\end{equation}
The function $r(\cdot, \rho_0; \tau)\in H^{1}(V)$ and satisfies for all $\left |\alpha  \right |\leq 1$
\begin{equation}\label{prdoszxa}
\left \| \partial^{\alpha} r(\cdot, \rho_0; \tau) \right \|_{L^2(V)}\leq C\,  \tau^{\left | \alpha \right |},\quad \tau\geq \tau_0.
\end{equation} 
The estimate (\ref{prtres}) is the same. Finally,  for every $\chi\in C^{\infty}_0(\mathbb{R}^n)$ we have
\begin{equation}\label{prcuatrozxa}
\underset{\tau\rightarrow \infty}{\lim}\left \| \chi (\Phi^\sharp(\cdot, \rho_0;\tau)- \Phi(\cdot; \rho_0)) \right \|_{L^2(\mathbb{R}^n)}=0.
\end{equation}
where the constant $C$ also depends on $\chi$. We will not use these estimates until section $5$ to prove Theorem \ref{identibiafgteru}.
\end{rem}
\subsection{From the boundary to the interior} In this section we state an integral estimate which involves a relation between the potentials (magnetic and electric) and the distance between their corresponding Cauchy data sets. This integral identity was proved implicitly in \cite{KU}, see Proposition $3.2$ therein.
\begin{lem} \label{Alid}
Let $\Omega$ be an open bounded set. Let $A_1,A_2\in L^{\infty}(\Omega; \mathbb{C}^n)$ and $q_1,q_2\in L^{\infty}(\Omega; \mathbb{C})$. Let $U_1, U_2\in H^{1}(\Omega)$ be functions satisfying in $\Omega$: $\mathcal{L}_{A_1, q_1}U_1=0$ and $\mathcal{L}_{\overline{A}_2, \overline{q_2}}U_2=0$. Then the following identity holds true:
\begin{align*}
& \left \langle N_{A_1,q_1} \left [ U_1 \right ], \overline{T_rU_2}  \right \rangle - \overline{\left \langle N_{\overline{A}_2,\overline{q_2}}\left [ U_2 \right ], \overline{T_rU_1}  \right \rangle}\\
& = \int_{\Omega} \left [ (A_1-A_2)\cdot \left( DU_1\overline{U_2} +  U_1 \overline{DU_2} \right) + \left( A_1^2-A_2^2+q_1-q_2 \right) U_1\overline{U_2} \right ] dx.
\end{align*}
\end{lem}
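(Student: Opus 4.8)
The plan is to derive the identity from the weak formulation of $\mathcal{L}_{A,q}$ recorded in the definition of $N^\Gamma_{A,q}$, exploiting the symmetry of the Dirichlet form and the fact that $U_1,U_2$ solve the corresponding equations. First I would write out $\langle N_{A_1,q_1}[U_1],\overline{T_rU_2}\rangle$ explicitly using \eqref{localW}: since $U_1$ solves $\mathcal{L}_{A_1,q_1}U_1=0$ and $U_2\in H^1(\Omega)$ is a legitimate representative of its own class, this equals
\[
\int_\Omega DU_1\cdot\overline{DU_2} + A_1\cdot\bigl(DU_1\,\overline{U_2}+U_1\,\overline{DU_2}\bigr) + (A_1^2+q_1)U_1\overline{U_2}\,dx.
\]
Next I would do the same for the second term. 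Here $U_2$ solves $\mathcal{L}_{\overline{A}_2,\overline{q_2}}U_2=0$, so $\langle N_{\overline{A}_2,\overline{q_2}}[U_2],\overline{T_rU_1}\rangle = \int_\Omega DU_2\cdot\overline{DU_1} + \overline{A}_2\cdot(DU_2\,\overline{U_1}+U_2\,\overline{DU_1}) + (\overline{A}_2^{\,2}+\overline{q_2})U_2\overline{U_1}\,dx$. Taking the complex conjugate of this whole expression turns $\overline{A}_2$ into $A_2$, $\overline{q_2}$ into $q_2$, and swaps each factor with its conjugate, yielding
\[
\overline{\langle N_{\overline{A}_2,\overline{q_2}}[U_2],\overline{T_rU_1}\rangle} = \int_\Omega \overline{DU_2}\cdot DU_1 + A_2\cdot\bigl(\overline{DU_2}\,U_1+\overline{U_2}\,DU_1\bigr) + (A_2^2+q_2)\overline{U_2}\,U_1\,dx.
\]

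Subtracting the two displays, the Dirichlet (gradient) terms $\int_\Omega DU_1\cdot\overline{DU_2}$ cancel exactly, and what remains is precisely
\[
\int_\Omega (A_1-A_2)\cdot\bigl(DU_1\,\overline{U_2}+U_1\,\overline{DU_2}\bigr) + \bigl(A_1^2-A_2^2+q_1-q_2\bigr)U_1\overline{U_2}\,dx,
\]
which is the claimed identity. One should note that the bracketed trace arguments $\overline{T_rU_2}$ and $\overline{T_rU_1}$ are well-defined because $U_1,U_2\in H^1(\Omega)$ restrict to $H^1(\Omega,\Gamma)/H^1_0(\Omega)$ (indeed the pairing in \eqref{localW} is independent of the choice of representative $v\in[g]$, so writing $v=U_2$ resp.\ $v=U_1$ is legitimate), and that all the integrals converge since $A_j\in L^\infty$, $q_j\in L^\infty$ and $U_j\in H^1(\Omega)\hookrightarrow L^2(\Omega)$ with $DU_j\in L^2(\Omega)$, so each product lies in $L^1(\Omega)$.

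I do not expect a genuine obstacle here: the statement is essentially an algebraic bookkeeping exercise built on the bilinear form, and it is indeed remarked in the excerpt that it "was proved implicitly in \cite{KU}." The only point requiring a little care is the conjugation step: one must track that conjugating $\langle N_{\overline{A}_2,\overline{q_2}}[U_2],\overline{T_rU_1}\rangle$ correctly produces $A_2$ and $q_2$ (not their conjugates) paired against $U_1\overline{U_2}$ (not $\overline{U_1}U_2$), so that the result lines up with the first term and the gradient contributions cancel rather than add. A secondary subtlety worth a sentence is justifying that the pairing $\langle N_{A_1,q_1}[U_1],[g]\rangle$ may be evaluated at the representative $g=U_2$ even though $U_2$ need not vanish on $\Gamma_0$ in the strong sense — but this is handled by the density definition \eqref{completation} of $H^1(\Omega,\Gamma)$ together with the requirement $U_2\in H^1(\Omega,\Gamma)$ implicit in the hypothesis that $\overline{T_rU_2}$ makes sense.
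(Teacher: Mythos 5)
Your proof is correct, and since the paper gives no proof of this lemma --- it only cites Proposition~3.2 of \cite{KU} --- there is no in-paper argument to compare against; what you wrote is exactly the expected one. The computation is sound: writing out both pairings via \eqref{localW} with representatives $v=U_2$ and $v=U_1$, taking the complex conjugate of the second (which correctly turns $\overline{A}_2,\overline{q_2}$ back into $A_2,q_2$ while swapping $U_2\overline{U_1}\mapsto\overline{U_2}U_1$ and $\overline{DU_1}\cdot DU_2\mapsto DU_1\cdot\overline{DU_2}$), and subtracting makes the Dirichlet terms cancel by the symmetry of the dot product, leaving precisely the right-hand side.

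One small remark on your closing paragraph: the worry about whether $U_2$ lies in $H^1(\Omega,\Gamma)$ is moot here. The lemma is stated with $N_{A,q}$ and $T_r$ \emph{without} the $\Gamma$ superscript, i.e.\ it is the full-data identity with $\Gamma=\partial\Omega$, $\Gamma_0=\emptyset$, in which case $H^1(\Omega,\Gamma)=H^1(\Omega)$ and every $U_j\in H^1(\Omega)$ is admissible. The local-data version is only derived afterwards as Corollary~\ref{coAlid}, precisely by imposing $U_1|_{\Gamma_0}=U_2|_{\Gamma_0}=0$. Your observation that the pairing is independent of the representative $v\in[g]$ (because $U_1$ is a weak solution, so $\langle\mathcal{L}_{A_1,q_1}U_1,w\rangle=0$ for $w\in H^1_0(\Omega)$) is the right justification for evaluating at $v=U_2$, and should stay.
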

\begin{cor}\label{coAlid}
Consider all conditions from the previous lemma. Consider also $s\in \left(0,1/2\right)$. Let $U_1, U_2\in H^{1}(\Omega)$ be functions satisfying in $\Omega$: $\mathcal{L}_{A_1, q_1}U_1=0$ with  $U_1|_{\Gamma_0}=0$ and $\mathcal{L}_{\overline{A}_2, \overline{q_2}}U_2=0$ with $U_2|_{\Gamma_0}=0$. Then there exists a positive constant $C$ (depending on $n, \Omega, \left \| A_j \right \|_{L^\infty\cap B^{2,\infty}_s}$, $ \left \| q_j \right \|_{L^\infty}; j=1,2$) such that
\begin{equation}\label{uau}
\begin{aligned}
& \left |  \int_{\Omega}  (A_1-A_2)\cdot \left( DU_1\overline{U_2} +  U_1 \overline{DU_2} \right) + \left( A_1^2-A_2^2+q_1-q_2 \right) U_1\overline{U_2}  \right |\\
&  \qquad  \leq C dist \, (C_1^\Gamma, C_2^\Gamma) \left \| U_1 \right \|_{H^1(\Omega)} \left \| U_2 \right \|_{H^1(\Omega)},
\end{aligned}
\end{equation}
where $C_j^\Gamma$ denotes the local Cauchy data set $C_{A_j, q_j}^\Gamma$, $j=1,2$.
\end{cor}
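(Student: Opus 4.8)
The plan is to combine the algebraic identity of Lemma \ref{Alid} with the definition (\ref{distanciaa}) of the pseudo-distance. Write $X:=H^1(\Omega,\Gamma)/H^1_0(\Omega)$, so the local boundary data live in $X\times X^*$. First I would note that, since $U_1$ solves $\mathcal L_{A_1,q_1}U_1=0$ and $U_2$ solves $\mathcal L_{\overline A_2,\overline{q_2}}U_2=0$, Lemma \ref{Alid} identifies the left-hand side of (\ref{uau}) with the boundary pairing
\[
\mathcal P:=\big\langle N_{A_1,q_1}[U_1],\overline{T_rU_2}\big\rangle-\overline{\big\langle N_{\overline A_2,\overline{q_2}}[U_2],\overline{T_rU_1}\big\rangle}.
\]
The hypotheses $U_1|_{\Gamma_0}=U_2|_{\Gamma_0}=0$ place $U_1,U_2\in H^1(\Omega,\Gamma)$, so these are the local objects and $(T_rU_1,N_{A_1,q_1}[U_1])\in C_1^\Gamma$; reading off (\ref{localW}) and the quotient norm also yields the crude bounds $\|T_rU_j\|_X\le\|U_j\|_{H^1(\Omega)}$ and $\|N_{\overline A_2,\overline{q_2}}[U_2]\|_{X^*}\le C\|U_2\|_{H^1(\Omega)}$ with $C=C(n,\Omega,\|A_2\|_{L^\infty},\|q_2\|_{L^\infty})$, and the same for the complex conjugates since conjugation is an $H^1$-isometry preserving $H^1(\Omega,\Gamma)$ and $H^1_0(\Omega)$.

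Second, I would insert a well-chosen element of $C_2^\Gamma$. Assuming $T_rU_1\neq0$ (the case $T_rU_1=0$ disposed of directly, as then only the first term of $\mathcal P$ survives), the positive homogeneity of $I(\,\cdot\,;C_2^\Gamma)$ — a consequence of the linear structure of the Cauchy data sets — lets me evaluate the $j=1$, $k=2$ branch of (\ref{distanciaa}) at $T_rU_1/\|T_rU_1\|_X$ to obtain $I\big((T_rU_1,N_{A_1,q_1}[U_1]);C_2^\Gamma\big)\le\mathrm{dist}(C_1^\Gamma,C_2^\Gamma)\,\|U_1\|_{H^1(\Omega)}$. Hence there is $W\in H^1(\Omega,\Gamma)$ with $\mathcal L_{A_2,q_2}W=0$, so that $(T_rW,N_{A_2,q_2}[W])\in C_2^\Gamma$ and
\[
\|T_rU_1-T_rW\|_X+\big\|N_{A_1,q_1}[U_1]-N_{A_2,q_2}[W]\big\|_{X^*}\le 2\,\mathrm{dist}(C_1^\Gamma,C_2^\Gamma)\,\|U_1\|_{H^1(\Omega)}.
\]

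Third, I would split $\mathcal P$ by adding and subtracting $\langle N_{A_2,q_2}[W],\overline{T_rU_2}\rangle$ and $\overline{\langle N_{\overline A_2,\overline{q_2}}[U_2],\overline{T_rW}\rangle}$, getting
\[
\mathcal P=\big\langle N_{A_1,q_1}[U_1]-N_{A_2,q_2}[W],\overline{T_rU_2}\big\rangle+\Big(\big\langle N_{A_2,q_2}[W],\overline{T_rU_2}\big\rangle-\overline{\big\langle N_{\overline A_2,\overline{q_2}}[U_2],\overline{T_rW}\big\rangle}\Big)+\overline{\big\langle N_{\overline A_2,\overline{q_2}}[U_2],\overline{T_rW-T_rU_1}\big\rangle}.
\]
The middle bracket is exactly the identity of Lemma \ref{Alid} with the pair $(A_2,q_2)$ in both slots — $W$ solving $\mathcal L_{A_2,q_2}W=0$ and $U_2$ solving $\mathcal L_{\overline A_2,\overline{q_2}}U_2=0$ — so it equals $\int_\Omega[(A_2-A_2)\cdot(DW\overline{U_2}+W\overline{DU_2})+(A_2^2-A_2^2+q_2-q_2)W\overline{U_2}]\,dx=0$. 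The two remaining terms are controlled by $X$–$X^*$ duality together with the crude bounds above and the choice of $W$: the first by $\|N_{A_1,q_1}[U_1]-N_{A_2,q_2}[W]\|_{X^*}\|U_2\|_{H^1(\Omega)}$ and the last by $C\|U_2\|_{H^1(\Omega)}\|T_rW-T_rU_1\|_X$; adding these and invoking the displayed bound for $W$ gives (\ref{uau}).

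The step I expect to be the real obstacle — and the reason the argument is arranged this way — is the vanishing of the middle bracket: it is forced that one approximate the Cauchy data of $U_1$ by data coming from an honest solution of $\mathcal L_{A_2,q_2}$, so that Lemma \ref{Alid} can be reused with coincident potentials. Attempting instead to relate $U_2$, a solution of the conjugated operator $\mathcal L_{\overline A_2,\overline{q_2}}$, directly to $C_2^\Gamma$ would introduce a spurious sign in the magnetic term, since complex conjugation flips the sign of the magnetic potential. Everything else — the homogeneity/normalization bookkeeping forced by the unit-trace constraint in (\ref{distanciaa}) and the degenerate case $T_rU_1=0$ — is routine; in particular the Besov hypothesis plays no role in this corollary beyond fixing the advertised dependence of $C$, only the $L^\infty$ norms of the potentials entering.
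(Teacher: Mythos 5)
Your proof is correct in substance and takes essentially the same route the paper indicates: the paper's ``proof'' of this corollary is the one-line remark following it, which defers to Proposition~$2.1$ of Caro--Pohjola (full data) and observes that the local-data version follows unchanged once $U_1,U_2\in H^1(\Omega,\Gamma)$. What you have done is spell out that argument: identify the integral with the boundary pairing via Lemma~\ref{Alid}, use the normalization in \eqref{distanciaa} and homogeneity of $C_1^\Gamma$ to produce an approximating $W$ with data in $C_2^\Gamma$, split the pairing into two duality terms plus a middle bracket, and kill the middle bracket by reapplying Lemma~\ref{Alid} with coincident potentials $(A_2,q_2)$. Your observation that one must approximate $U_1$ (a solution of $\mathcal L_{A_1,q_1}$) rather than $U_2$ (a solution of the conjugated operator $\mathcal L_{\overline A_2,\overline{q_2}}$) is exactly the point that makes the middle bracket cancel rather than produce a magnetic-sign mismatch, and the crude bounds $\|T_r u\|_X\le\|u\|_{H^1}$ and $\|N_{A,q}^\Gamma[u]\|_{X^*}\le C\|u\|_{H^1}$ are precisely what \eqref{localW} gives.

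One small point is under-argued: the parenthetical dismissal of the degenerate case $T_rU_1=0$. Writing ``only the first term of $\mathcal P$ survives'' is true, but that surviving term is $\langle N_{A_1,q_1}[U_1],\overline{T_rU_2}\rangle$, and when $U_1\in H^1_0(\Omega)$ is a nontrivial Dirichlet eigenfunction one cannot conclude $N_{A_1,q_1}[U_1]=0$ from $[U_1]=0$ alone (the pairing vanishes against $H^1_0(\Omega)$ representatives by the weak equation, but not against general $v\in H^1(\Omega,\Gamma)$). To close this case one should either run the same argument on $U_1+\epsilon\tilde U$ for some solution $\tilde U$ with $T_r\tilde U\neq0$ and let $\epsilon\to0$ (both sides of \eqref{uau} are continuous in $U_1$), or observe via the scaling you already invoke that finiteness of $\mathrm{dist}(C_1^\Gamma,C_2^\Gamma)$ forces the pair $(0,N_{A_1,q_1}[U_1])$ into the closure of $C_2^\Gamma$, after which the same Lemma~\ref{Alid} cancellation gives $\mathcal P=0$. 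This is a technicality rather than a flaw in the approach, and the main computation is sound.
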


\begin{rem}
Corollary \ref{coAlid} was proved for full data case in Proposition $2.1$ in \cite{CP}. The proof for local data follows from the full data case and taking into account that $U_1|_{\Gamma_0}= U_2|_{\Gamma_0}=0$ with (\ref{distanciaa}).
\end{rem}

\subsection{Solutions vanishing on $\Gamma_0$} In this section we will use Theorem \ref{Exso} to construct solutions $U\in H^{1}(\Omega)$ for the magnetic Schr\"odinger operator $\mathcal{L}_{A,q}U=0$ (in $\Omega$) with the required condition $U|_{\Gamma_0}=0$. To achieve this condition we will use a reflection argument as in \cite{KLU}. The main results of this section is Proposition \ref{construcsolutions}.\\

We set $x^*=(x^\prime, -x_n)$ for any $x=(x^\prime, x_n)\in \mathbb{R}^n$, $f^*(x)=f(x^*)$ for any function $f$ and $E^*=\left \{ x^*: x\in E \right \} $. Also for any $\rho\in \mathbb{C}^{n}$ we define $\rho^*=(\Re \rho)^*+ i(\Im \rho)^*$.
Then, similarly to \cite{KLU}, we extend the magnetic an electric potentials from $\Omega$ to $\Omega^*$ by reflection with respect to the plane $\left\{ x\in \mathbb{R}^n \, : \, x_n=0\right\}$ as follows. Recall that we have denoted by $A=(A^{(1)}, A^{(2)}, \ldots, A^{(n-1)}, A^{(n)})$ a magnetic potential. Then for $A^{(k)}$ with $k=1,2,\ldots, n-1$ we make an even extension and for $A^{(n)}$ we make an odd extension. We denote this extension by $\widetilde{A}$. More precisely,  for all $k=1,2, \ldots, n-1$ we have:
 \[
     \widetilde{A^{(k)}} (x)=  \begin{cases}
         A^{(k)} (x^\prime, x_n), & x\in \Omega \\ 
          A^{(k)} (x^\prime, -x_n) ,& x\in \Omega^{*},
     \end{cases}
\]
and
 \[
     \widetilde{A^{(n)}} (x)=  \begin{cases}
         A^{(n)} (x^\prime, x_n), & x\in \Omega \\ 
         - A^{(n)} (x^\prime, -x_n) ,& x\in \Omega^{*}.
     \end{cases}
\]
In the same way, for a electric potential $q$ we make an even extension. We denote these extensions by $\widetilde{q}$. More precisely, we have:
 \[
     \widetilde{q_j} (x)=  \begin{cases}
         q (x^\prime, x_n), & x\in \Omega \\ 
          q (x^\prime, -x_n) ,& x\in \Omega^{*}.
     \end{cases}
\]

The following lemma gives us the smoothness properties for the magnetic potentials of the above extensions.
\begin{lem}\label{heyw}
Let $\Omega\subset \mathbb{R}^n$ be a bounded set. Let $M>0$ and $s\in \left( 0,1/2\right)$. Consider $A\in L^\infty(\Omega; \mathbb{C}^n)$ and $q\in L^\infty({\Omega; \mathbb{C}})$. If $\chi_\Omega A\in \mathcal{A}(\Omega, M,s)$ and $\chi_\Omega q \in \mathcal{Q}(\Omega, M, s)$ then $\chi_{\Omega \cup \Omega^*}\widetilde{A}\in \mathcal{A}(\Omega\cup\Omega^*, 2M,s)$ and $\chi_{\Omega\cup\Omega^*}\widetilde{q}\in \mathcal{Q}(\Omega\cup\Omega^*, 2M,s)$. 
\end{lem}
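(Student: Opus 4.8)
The plan is to verify the two membership claims $\chi_{\Omega\cup\Omega^*}\widetilde A\in\mathcal A(\Omega\cup\Omega^*,2M,s)$ and $\chi_{\Omega\cup\Omega^*}\widetilde q\in\mathcal Q(\Omega\cup\Omega^*,2M,s)$ by checking the three defining conditions of the admissible classes: the support condition, the $L^\infty$ bound, and the $B^{2,\infty}_s$ bound. The support condition is immediate since $\supp\widetilde A\subset\overline{\Omega}\cup\overline{\Omega^*}=\overline{\Omega\cup\Omega^*}$, and the same for $\widetilde q$. The $L^\infty$ bound is also essentially trivial: each reflected component $\widetilde{A^{(k)}}$ (whether evenly or oddly extended) satisfies $\|\widetilde{A^{(k)}}\|_{L^\infty(\mathbb R^n)}=\|A^{(k)}\|_{L^\infty(\Omega)}$, and likewise $\|\widetilde q\|_{L^\infty(\mathbb R^n)}=\|q\|_{L^\infty(\Omega)}$; so in fact the $L^\infty$ part of the norm does not even grow, and certainly stays below $2M$.

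The substantive point is the Besov estimate. I would reduce to the scalar case, treating each component $\widetilde{A^{(k)}}$ and $\widetilde q$ separately, since the vectorial Besov norm is built componentwise. Fix a scalar function $g$ supported in $\overline\Omega\subset\{x_n>0\}$ with $\chi_\Omega g\in B^{2,\infty}_s$, and let $\widetilde g$ denote either its even extension $\widetilde g(x',x_n)=g(x',|x_n|)$ or its odd extension $\widetilde g(x',x_n)=\operatorname{sgn}(x_n)g(x',|x_n|)$ across $\{x_n=0\}$. The $L^2$ part is clear: $\|\widetilde g\|_{L^2(\mathbb R^n)}^2=2\|g\|_{L^2(\mathbb R^n)}^2$. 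For the seminorm, I need to bound $\|\widetilde g(\cdot+y)-\widetilde g(\cdot)\|_{L^2(\mathbb R^n)}$ by a constant times $|y|^s\|g\|_{B^{2,\infty}_s}$ uniformly in $y$, and the natural strategy is to split $\mathbb R^n$ into the upper and lower half-spaces and, on each, compare the translate of $\widetilde g$ with the translate of the reflection. Writing $R$ for the reflection $x\mapsto x^*$, on $\{x_n>0\}$ one has $\widetilde g=g$ and $\widetilde g(\cdot+y)$ equals either $g(\cdot+y)$ (if the translated point stays in the upper half-space) or $\pm g(R(\cdot+y))$ otherwise; the key algebraic identity is that $R(x+y)=R(x)+R(y)=x^*+y^*$, so a translation by $y$ followed by reflection is a reflection followed by translation by $y^*$, and $|y^*|=|y|$. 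This lets one dominate the half-space difference by a combination of $\|g(\cdot+y)-g(\cdot)\|_{L^2}$ and $\|g(\cdot+y^*)-g(\cdot)\|_{L^2}$ (after a measure-preserving change of variables using that $R$ is an isometry), each of which is $\le|y|^s\|g\|_{B^{2,\infty}_s}$. Summing the contributions from the two half-spaces and over the $n$ components gives $\|\widetilde A\|_{L^\infty\cap B^{2,\infty}_s}\le 2\|\chi_\Omega A\|_{L^\infty\cap B^{2,\infty}_s}\le 2M$, and identically $\|\widetilde q\|_{L^\infty\cap B^{2,\infty}_s}\le 2M$.

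The main obstacle, and the only place where care is genuinely needed, is handling the seminorm contribution from points near the reflection hyperplane $\{x_n=0\}$, i.e. from translates $x+y$ that cross from one half-space to the other. For the even extension this is harmless because $\widetilde g$ is continuous across the hyperplane (as $g$ vanishes near $\{x_n=0\}$, having support in the open half-space $\{x_n>0\}$ — though one should note $\Omega$ need only be a bounded open set, so $\overline\Omega$ may touch $\{x_n=0\}$ along $\Gamma_0$, and $\chi_\Omega g$ need not vanish identically near $\Gamma_0$; nonetheless the Besov difference quotient of $\chi_\Omega g$ already encodes the behaviour near $\Gamma_0$, so the crossing term is controlled by the same quantity). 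For the odd extension there is a genuine jump across $\Gamma_0$ in general, but this jump is exactly the boundary trace of $g$ on $\Gamma_0$, which is controlled by $\|\chi_\Omega g\|_{B^{2,\infty}_s}$; concretely, the contribution of the crossing region to $\|\widetilde g(\cdot+y)-\widetilde g(\cdot)\|_{L^2}$ is, after the reflection identity above, bounded by $\|\chi_\Omega g(\cdot+y)-\chi_\Omega g(\cdot)\|_{L^2}+\|\chi_\Omega g(\cdot+y^*)-\chi_\Omega g(\cdot)\|_{L^2}$, both $\lesssim|y|^s\|\chi_\Omega g\|_{B^{2,\infty}_s}$. One records explicitly the elementary facts that $R$ is a linear isometry of $\mathbb R^n$ with $R(y)=y^*$, $|y^*|=|y|$, and $R$ maps each half-space onto the other, since these are the workhorses of the change-of-variables steps. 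With these in hand the lemma follows.
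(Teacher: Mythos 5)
Your physical-space argument is correct and arrives at the same factor $2$, but by a genuinely different route: the paper works on the Fourier side. The common core is the observation that, since $\Omega\cap\Omega^*=\emptyset$, one has the global identity
\[
\chi_{\Omega\cup\Omega^*}\widetilde{A^{(k)}} \;=\; \chi_\Omega A^{(k)} \pm (\chi_\Omega A^{(k)})\circ R,
\qquad R(x)=x^*,
\]
with sign $+$ for $k<n$ and $-$ for $k=n$. The triangle inequality together with the facts that $R$ is a linear, measure-preserving isometry satisfying $R(x+y)=x^*+y^*$ and $|y^*|=|y|$ gives, after a single change of variables,
\[
\left\|\chi_{\Omega\cup\Omega^*}\widetilde{A^{(k)}}(\cdot+y) - \chi_{\Omega\cup\Omega^*}\widetilde{A^{(k)}}\right\|_{L^2}
\le
\left\|\chi_\Omega A^{(k)}(\cdot+y) - \chi_\Omega A^{(k)}\right\|_{L^2}
+\left\|\chi_\Omega A^{(k)}(\cdot+y^*) - \chi_\Omega A^{(k)}\right\|_{L^2},
\]
with no case analysis, and this is exactly your bound. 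The paper instead records a Plancherel characterization of the $B^{2,\infty}_s$ norm (its equation (2.9)) and computes $\mathcal{F}\left[\chi_{\Omega\cup\Omega^*}\widetilde{A^{(k)}}\right](\xi)=\mathcal{F}\left[\chi_\Omega A^{(k)}\right](\xi)\pm\mathcal{F}\left[\chi_\Omega A^{(k)}\right](\xi^*)$, after which the bound follows because $\xi\mapsto\xi^*$ is measure-preserving and $\xi^*\cdot y=\xi\cdot y^*$. The two arguments are precisely dual. Yours is slightly more elementary (no Fourier transform needed) and, as you note, also shows the $L^\infty$ part of the norm does not grow at all.

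One caution about your framing. The discussion of crossing terms and the claim that the jump of the odd extension is ``exactly the boundary trace of $g$ on $\Gamma_0$'' is a misleading heuristic and, if taken literally, ill-posed: $B^{2,\infty}_s(\mathbb{R}^n)$ with $s<1/2$ does not admit traces on hypersurfaces, so there is no boundary trace to control. The clean argument above never isolates a crossing region and never invokes a trace; the reflected half is handled in one stroke by the change of variables via $R$. You do reach the correct bound in the end, but the trace intuition is extraneous overhead that would not survive an attempt to make it rigorous, so it is better dropped.
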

\begin{proof}The proof is based on the following observation. If $f\in B^{2, \infty}_s(\mathbb{R}^n; \mathbb{C})$ then for all $y\in \mathbb{R}^n$ and  by Plancherel's theorem we get 
\[
\left \| f(\cdot+y)- f(\cdot ) \right \|_{L^2(\mathbb{R}^n)}^2 = \int_{\mathbb{R}^n}\left | \widehat{f} (\xi)\right |^2 \left | e^{-2\pi i\xi\cdot y}-1 \right |^2d\xi,
\]
which imply the following equivalent norm to (\ref{Besovspacet}):
\begin{equation}\label{equiB}
\left \| f \right \|_{ B^{2, \infty}_s(\mathbb{R}^n)}^2=\int_{\mathbb{R}^n}\left | \widehat{f} (\xi)\right |^2d\xi    + \underset{y\in \mathbb{R}^n\setminus \left\{ 0 \right\}}{\sup}  \dfrac{ \int_{\mathbb{R}^n}\left | \widehat{f} (\xi)\right |^2 \left | e^{-2\pi i\xi\cdot y}-1 \right |^2d\xi}{\left | y \right |^{2s}}.
\end{equation}
According to the above identity, the next step will be to obtain a relation between $\widehat{\chi_\Omega A}$ and $\widehat{\chi_{\Omega\cup \Omega^*}\widetilde{A}}$. In this way, for any $j=1,2, \ldots, n-1$ we have
\begin{align*}
 \widehat{\chi_{\Omega\cup \Omega^*}\widetilde{A^{(j)}}} (\xi)&  = \int_{\mathbb{R}^n}e^{i\xi\cdot x} \chi_{\Omega\cup \Omega^*}\widetilde{A^{(j)}}dx = \int_{\Omega\cup \Omega^*}e^{i\xi\cdot x} \widetilde{A^{(j)}}dx\\
& =  \int_{\Omega}e^{i\xi\cdot x} \widetilde{A^{(j)}}dx +  \int_{ \Omega^*}e^{i\xi\cdot x} \widetilde{A^{(j)}}dx\\
& =  \int_{\Omega}e^{i\xi\cdot x} A^{(j)}(x)dx + \int_{\Omega^*}e^{i\xi\cdot x} A^{(j)}(x^*)dx \\
& =  \int_{\Omega}e^{i\xi\cdot x} A^{(j)}(x)dx + \int_{\Omega}e^{i\xi^*\cdot x} A^{(j)}(x)dx \\
& = \widehat{\chi_\Omega A^{(j)}} (\xi) + \widehat{\chi_\Omega A^{(j)}} (\xi^*). 
\end{align*}
Hence, by (\ref{equiB}) and the above identity, we get
\[
\left \| \chi_{\Omega\cup \Omega^*}\widetilde{A^{(j)}} \right \|_{B^{2,\infty}_s(\mathbb{R}^n)}\leq 2 \left \| \chi_{\Omega}A^{(j)}\right \|_{B^{2,\infty}_s(\mathbb{R}^n)}.
\]
Analogously, we obtain
\[
\left \| \chi_{\Omega\cup \Omega^*}\widetilde{A^{(n)}} \right \|_{B^{2,\infty}_s(\mathbb{R}^n)}\leq 2 \left \| \chi_{\Omega}A^{(n)}\right \|_{B^{2,\infty}_s(\mathbb{R}^n)}.
\]
Moreover, since $A\in L^{\infty}(\Omega;\mathbb{C}^n)$ it follows that $\chi_{\Omega\cup\Omega^*}\widetilde{A}\in L^{\infty}(\mathbb{R}^n;\mathbb{C}^n)$. Thus, by combining this fact with the two above inequalities we obtain the desired result for $A$. The proof for $q$ is analogous. So our proof is completed.
\end{proof}
Now roughly we explain the main ideas to obtain $U\in H^{1}(\Omega)$ satisfying $\mathcal{L}_{A, q}\, U=0$ in $\Omega$ with the required condition on the boundary $U|_{\Gamma_0}=0$. At the beginning one can apply Theorem \ref{Exso}  with $V=\Omega\cup \Omega^*$ in order to obtain $u\in H^1(\Omega\cup\Omega^*)$ satisfying $\mathcal{L}_{\chi_{\Omega\cup\Omega^*} \widetilde{A},\, \chi_{\Omega\cup\Omega^*} \widetilde{q}}\, u=0$ in $\Omega\cup\Omega^*$. Then it is easy to see that $U(x):=u(x)-u(x^*)$ is a solution of $\mathcal{L}_{A,q}U=0$ in $\Omega$. Thus, it only remains to prove the vanishing  condition on $\Gamma_0$. As we will see, in order to prove such condition we have to use an integration by parts. A priori this is not possible in $\Omega$ because we are not assuming any smoothness over $\partial\Omega$ (in particular over $\partial\Omega\setminus \Gamma_0$). To remedy this technical obstruction we will consider a ball $B$ such that $\Omega\cup\Omega^*\subset \subset B$ and then we construct a solutions of the operator $\mathcal{L}_{A,q}U=0$ in $B^+$, the upper half part of $B$. It is now possible to  apply an integration by parts in $B^+$ in order to obtain $U|_{\partial B^+ \cap \left\{ x_n=0\right\}}=0$. Finally, $U|_\Omega$ is a solution in $\Omega$ of the initial equation $\mathcal{L}_{A, q}\, U|_\Omega=0$ and the vanishing condition on $\Gamma_0$ follows from $\Gamma_0\subset \partial B^+ \cap \left\{ x_n=0\right\}$. The following proposition will be devoted to state and prove these ideas. 
\begin{prop} \label{construcsolutions}
Let $\Omega\subset \mathbb{R}^n$ be a bounded open set.  Let $A\in L^{\infty}(\Omega, \mathbb{C}^n)$ and $q\in L^{\infty}(\Omega, \mathbb{C})$. Given $M>0$ and $s\in \left( 0,1/2 \right)$, suposse that $\chi_\Omega A$ belongs to $\mathcal{A}(\Omega, M,s)$. Consider $\rho\in \mathbb{C}^n$ such that $\rho\cdot \rho=0$ and $\rho=\rho_0+\rho_\tau$ with $\rho_0$ being independent of some large parameter $\tau>0$, $\left | \Re \rho_0 \right |= \left | \Im \rho_0 \right |=1$ and $\rho_\tau=\mathcal{O}(\tau^{-1})$ as $\tau \mapsto \infty$. Then there exist two positive constants $C$ and $\tau_0$ (both depending on $n, \Omega, M, s, \left \| q \right \|_{L^\infty(\Omega)}$); and a solution $U\in H^{1}(\Omega)$ to the equation $\mathcal{L}_{A,q}\,U=0$ in $\Omega$ with $U|_{\Gamma_0}=0$ and of the form
\begin{equation}\label{az}
U(x,\rho;\tau)= u(x,\rho;\tau)- u(x^*,\rho;\tau),\quad x\in \Omega,
\end{equation}
where $u\in H^1({\Omega\cup\Omega^*})$ is a function satisfying $\mathcal{L}_{\widetilde{A}, \widetilde{q}}\,u=0$ in $\Omega\cup\Omega^*$ and has the form:
\begin{equation}\label{bz}
u(x,\rho;\tau)= e^{\tau \rho\cdot x} \left(   e^{\Phi^\sharp(x,\rho_0;\tau)} +r(x,\rho; \tau)  \right), \quad x\in \Omega\cup\Omega^*.
\end{equation}
Moreover we have the following properties:
\begin{item}
\item[(i)] The function $\Phi^\sharp(\cdot, \rho_0; \tau)  \in C^{\infty}(\mathbb{R}^n)$ and satisfies for all $\alpha\in \mathbb{N}^n$
\begin{equation}\label{prunoz}
\left \| \partial^{\alpha} \Phi^\sharp(\cdot, \rho_0; \tau) \right \|_{L^\infty(\mathbb{R}^n)}  \leq C \tau^{\left | \alpha \right |/(s+2)}, \quad \tau\geq \tau_0.
\end{equation}
\item[(ii)] The function $r(\cdot, \rho_0; \tau)\in H^{1}(\Omega\cup\Omega^*)$ and satisfies
\begin{equation}\label{prdosz}
\left \| \partial^{\alpha} r(\cdot, \rho_0; \tau) \right \|_{L^2(\Omega\cup\Omega^*)}\leq C \tau^{\left | \alpha \right |-s/(s+2)}, \quad \left |\alpha  \right |\leq 1.
\end{equation}
\item[(iii)] If we define by $\kappa:=\underset{x\in \overline{\Omega\cup\Omega^*}}{\sup }\left | x \right |$ then the solution $u$ satisfies 
\begin{equation}\label{jaci1}
\left \| u \right \|_{H^1(\Omega\cup\Omega^*)}\leq C\, e^{\tau \kappa   \left | \rho \right |  }.
\end{equation}
\end{item}
If we denote by  $\Phi(\cdot; \rho_0)= (\rho_0\cdot \nabla)^{-1} (-i\rho_0\cdot (\chi_{\Omega\cup\Omega^*} \widetilde{A}))\in L^{\infty}(\mathbb{R}^n)$ the function satisfying the equation in $\mathbb{R}^n$
\begin{equation}\label{mmmmmz}
\rho_0\cdot \nabla \Phi + i\rho_0\cdot (\chi_{\Omega\cup\Omega^*} \widetilde{A})=0
\end{equation}
then
\begin{equation}\label{prtresz}
\left \| \Phi (\cdot; \rho_0)\right \|_{L^\infty(\mathbb{R}^n)} \leq C.
\end{equation}
Finally,  for every $\chi\in C^{\infty}_0(\mathbb{R}^n)$ we have
\begin{equation}\label{prcuatroz}
\left \| \chi (\Phi^\sharp(\cdot, \rho_0;\tau)- \Phi(\cdot; \rho_0)) \right \|_{L^2(\mathbb{R}^n)}\leq C \tau^{-s/(s+2)},
\end{equation}
where the constant $C$ also depends on $\chi$.
\end{prop}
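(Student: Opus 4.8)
The plan is to combine Theorem \ref{Exso} with the reflection procedure sketched right before the statement, being careful to produce the vanishing trace on $\Gamma_0$ in a setting where integration by parts is legitimate. First I would fix a large ball $B$ with $\overline{\Omega\cup\Omega^*}\subset\subset B$, and let $B^+=B\cap\{x_n>0\}$. By Lemma \ref{heyw}, the reflected potentials satisfy $\chi_{\Omega\cup\Omega^*}\widetilde A\in\mathscr A(\Omega\cup\Omega^*,2M,s)$ and $\chi_{\Omega\cup\Omega^*}\widetilde q\in\mathscr Q(\Omega\cup\Omega^*,2M,s)$; in particular $\widetilde A\in L^\infty\cap B^{2,\infty}_s(\mathbb R^n;\mathbb C^n)$ and $\widetilde q\in L^\infty$ with supports in $\overline{\Omega\cup\Omega^*}\subset\subset B$. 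Then I apply Theorem \ref{Exso} with $V=B$ (equivalently $V=\Omega\cup\Omega^*$, since the potentials are supported there and $\kappa$ can be taken as $\sup_{x\in\overline{\Omega\cup\Omega^*}}|x|$), obtaining $u\in H^1(B)$ solving $\mathcal L_{\widetilde A,\widetilde q}u=0$ in $B$ of the form \eqref{bz}, together with all the estimates \eqref{pruno}--\eqref{prcuatro}. Since the estimates in Theorem \ref{Exso} depend on the potentials only through $\|\widetilde A\|_{L^\infty\cap B^{2,\infty}_s}\le 2M$ and $\|\widetilde q\|_{L^\infty}=\|q\|_{L^\infty(\Omega)}$, the constants $C,\tau_0$ depend only on $n,\Omega,M,s,\|q\|_{L^\infty(\Omega)}$, as required; transporting these bounds to $\Omega\cup\Omega^*$ is immediate, which gives (i), (ii), (iii), \eqref{prtresz} and \eqref{prcuatroz} for $u$ once we note $\Phi$ here solves \eqref{mmmmmz} with the reflected potential.

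Next I would set $U(x):=u(x)-u(x^*)$ for $x\in B^+$. Because the extension is even in the tangential components $\widetilde{A^{(k)}}$ ($k\le n-1$), odd in $\widetilde{A^{(n)}}$, and even in $\widetilde q$, a direct change of variables $x\mapsto x^*$ in the weak formulation shows that $x\mapsto u(x^*)$ solves the same equation $\mathcal L_{\widetilde A,\widetilde q}v=0$ in $B^+$ (this is the standard computation behind Isakov's reflection, see \cite{Is,KLU}); hence $U$ solves $\mathcal L_{\widetilde A,\widetilde q}U=0=\mathcal L_{A,q}U$ in $\Omega$ after restriction, since $\widetilde A=A$, $\widetilde q=q$ on $\Omega$. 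The point of working in $B^+$ rather than directly in $\Omega$ is that $\partial B^+$ is Lipschitz (indeed piecewise smooth), so $H^1$ functions have honest traces there and Green's identity is available. To get $U|_{\Gamma_0}=0$ I would argue as follows: $w(x):=u(x^*)$ and $u$ both lie in $H^1(B^+)$ and solve the same magnetic Schr\"odinger equation; their difference $U$ has, by construction, the symmetry $U(x^*)=-U(x)$, which forces the trace of $U$ on $\{x_n=0\}\cap B$ to be odd under $x'\mapsto x'$, i.e. to vanish. More carefully, the trace $U|_{\{x_n=0\}}$ is well defined in $H^{1/2}$ of that hyperplane piece, and the reflection symmetry $U\circ{}^* =-U$ passes to the trace; since ${}^*$ acts trivially on $\{x_n=0\}$, the trace equals its own negative, hence is zero. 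In particular $U|_{\Gamma_0}=0$ because $\Gamma_0\subset\{x_n=0\}\cap\partial B^+$, and then $U\in H^1(\Omega,\Gamma)$ in the sense of \eqref{completation}. Finally \eqref{jaci1}, together with $|U|\le|u(\cdot)|+|u(\cdot^*)|$ and the fact that $|\rho^*|=|\rho|$, $\kappa^*=\kappa$, gives the $H^1(\Omega)$ bound $e^{\tau\kappa|\rho|}$ for $U$ as well.

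The main obstacle I anticipate is making the trace-vanishing argument fully rigorous in the non-smooth setting: one must justify that $u(\cdot^*)$ genuinely solves the reflected equation in the weak $H^1$ sense (checking the sign bookkeeping in the cross terms $\widetilde A\cdot(Du\,\overline v+u\,\overline{Dv})$ under the substitution, where the odd extension of $A^{(n)}$ exactly compensates the sign flip in $D_n$), and that the reflection symmetry of $U$ really descends to its boundary trace on the flat piece — this is where the Lipschitz regularity of $B^+$ and the continuity of the trace operator $H^1(B^+)\to H^{1/2}(\partial B^+)$ are essential, and why one cannot argue directly on $\partial\Omega$. A secondary point is bookkeeping: the mollification parameter in Theorem \ref{Exso} was chosen as $\delta=\tau^{-1/(s+2)}$ for the reflected potential, and one should check that the function $\Phi$ in \eqref{mmmmmz} is built from $\widetilde A$ rather than from $A$, so that $\Phi^\sharp-\Phi$ in \eqref{prcuatroz} controls the mollification error for the symmetrized potential; this is purely a matter of invoking Theorem \ref{Exso} with the correct $V$ and potential, with no new analysis required. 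Everything else is a transcription of the estimates of Theorem \ref{Exso} restricted from $\mathbb R^n$ (or $B$) to $\Omega\cup\Omega^*$, together with the elementary observations $|\rho^*|=|\rho|$ and $\rho^*\cdot\rho^*=0$.
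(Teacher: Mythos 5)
Your proposal is correct and follows essentially the same route as the paper: apply Theorem \ref{Exso} with $V=B$ to the reflected potentials (using Lemma \ref{heyw}), form $U(x)=u(x)-u(x^*)$ in $B^+$, and observe that the symmetry of the extensions makes $u(\cdot^*)$ a solution of the same equation, so that $U$ solves $\mathcal{L}_{A,q}U=0$ in $\Omega$ with vanishing trace on $\partial B^+\cap\{x_n=0\}\supset\Gamma_0$. If anything, your discussion of why the trace vanishes (antisymmetry $U\circ{}^*=-U$ descending to the $H^{1/2}$ trace on the flat piece of the Lipschitz boundary of $B^+$) is more explicit than the paper's ``it is easy to see that $U=0$ on $\partial B^+\cap\{x_n=0\}$,'' and your flag about verifying that $u(\cdot^*)$ solves the reflected equation in the weak sense (sign bookkeeping in the $\widetilde{A}\cdot(Du\,\overline v+u\,\overline{Dv})$ term) correctly identifies what the paper's ``straightforward computation'' is hiding.
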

\begin{proof}
The proof is an immediate consequence of Theorem \ref{Exso}. Let $B$ be a ball centered at some fixed point on $\Gamma_0$ and such that $\overline{\Omega\cup\Omega^*}\subset B$. By hypothesis $\chi_\Omega A$ belongs to $\mathcal{A}(\Omega, M,s)$, thus by Lemma \ref{heyw}, it follows that $\chi_{\Omega \cup \Omega^*}\widetilde{A}\in \mathcal{A}(\Omega\cup\Omega^*, 2M,s)$. Since the function $\chi_{\Omega \cup \Omega^*}\widetilde{A}$ is zero out of $B$, we deduce that $\chi_{\Omega \cup \Omega^*}\widetilde{A}$ also belongs to $\mathcal{A}(B, 2M,s)$, which imply that $\chi_{\Omega \cup \Omega^*}\widetilde{A}\in L^\infty\cap B^{2,\infty}_s(\mathbb{R}^n, \mathbb{C}^n)$ and $\supp (\chi_{\Omega \cup \Omega^*}\widetilde{A})\subset B$. Notice also that $\chi_{\Omega\cup\Omega^*}\widetilde{q}\in L^{\infty}(\mathbb{R}^n, \mathbb{C})$ and $\supp (\chi_{\Omega\cup\Omega^*}\widetilde{q})\subset B$. Then, by Theorem \ref{Exso} applied to the functions $\chi_{\Omega\cup\Omega^*}\widetilde{A}$ and $\chi_{\Omega\cup\Omega^*}\widetilde{q}$ and $V=B$; there exist two positive constants $C$ and $\tau_0$ (both depending on $n, \Omega, M,  \left \| q \right \|_{L^\infty(\Omega)} $); and a function $u\in H^{1}(B)$ of the form
\[
u(x,\rho;\tau)=e^{\tau \rho\cdot x} \left(   e^{\Phi^\sharp(x,\rho_{0};\tau)} +r(x,\rho; \tau)  \right), \quad \tau\geq \tau_0,
\]
 satisfying in $B$:
 \begin{equation}\label{restrictl}
 \mathcal{L}_{\chi_{\Omega\cup \Omega^*\widetilde{A}}, \chi_{\Omega\cup\Omega^*}\widetilde{q} }u=0
 \end{equation}
with the corresponding estimates (\ref{pruno})-(\ref{prcuatro}). These estimates implies the estimates (\ref{prunoz})-(\ref{prcuatroz}). Now by a straightforward computation we have
\[
\mathcal{L}_{\chi_\Omega A, \chi_\Omega q} u (x) =0, \quad x\in B^+
\]
and
\[
\mathcal{L}_{\chi_\Omega A, \chi_\Omega q} u (x^*) =0, \quad x\in B^+,
\]
where $B^+= \left\{ x\in B : x_n>0  \right\}$, denoted the upper half part of $B$. Thus, from the two above equations we immediately deduce that the function defined by $U(x):=u(x)-u(x^*)$ satisfies $\mathcal{L}_{\chi_\Omega A, \chi_\Omega q}\, U=0$ in $B^+$. Also it is easy to see that $U(x)=0$ on $\partial B^+\cap \left\{ x_n=0\right\}$. Finally it is clear that $U$ restricted to $\Omega$, still denoted by $U$, satisfies the assertion of the Proposition and then the proof is complete.
\end{proof}
The next step will be to use Proposition \ref{construcsolutions} to construct functions $U_1, U_2\in H^1(\Omega)$, for some suitables $\rho_1$ and $\rho_2$, satisfying $\mathcal{L}_{A_1, q_1}U_1=0$ with $U_1|_{\Gamma_0}=0$ and $\mathcal{L}_{\overline{A_2}, \overline{q_2}}U_2=0$ with $U_2|_{\Gamma_0}=0$. Then plugging these solutions into (\ref{uau}) we shall obtain information about $A_1-A_2$. Firstly, we shall give the motivation behind the choice of $\rho_1$ and $\rho_2$. Given $\xi\in \mathbb{R}^n$, let $\mu_1$ and $\mu_2$ be unit vectors in $\mathbb{R}^n$ such that 
\begin{equation}\label{–n–z}
\xi\cdot \mu_{1}=\xi\cdot \mu_{2}=\mu_{1}\cdot \mu_{2}=0.
\end{equation}
Then, as in \cite{KU}, for a large parameter $\tau>0$ we set
\begin{equation}\label{jkw}
\begin{aligned}
&\rho_1= \frac{i}{2}\tau^{-1}\xi + i \sqrt{1-\tau^{-2} \frac{\left | \xi \right |^2}{4}}\mu_{1} +\mu_{2},\\
&\rho_2= -\frac{i}{2}\tau^{-1}\xi + i \sqrt{1-\tau^{-2} \frac{\left | \xi \right |^2}{4}}\mu_{1} -\mu_{2}.
\end{aligned}
\end{equation}
Observe that $\rho_1$ and $\rho_2$ can be written as follows:
\begin{equation}\label{jkww}
\rho_1= \rho_{1,0}+\mathcal{O}(\tau^{-1}), \, \rho_2=  \rho_{2,0}+\mathcal{O}(\tau^{-1}),\quad \left | \rho_1 \right | = \left | \rho_2 \right |= \left | \rho_1^* \right | = \left | \rho_2^* \right |=\sqrt{2},
\end{equation}
where
\begin{equation}\label{jkwww}
\rho_{1,0}=  i\mu_{1} + \mu_{2}, \quad \rho_{2,0}=  i\mu_{1} - \mu_{2}.
\end{equation}

Now, by Proposition \ref{construcsolutions}, for such $\rho_1$ there exists $U_1\in H^{1}(\Omega)$ of the form
\[
U_1= e^{\tau\rho_1\cdot x} \left( e^{\Phi_1^\sharp} +r_1  \right)- e^{\tau\rho_1^*\cdot x} \left( e^{\Phi_1^{\sharp^*}} +r_1^*  \right)
\]
satisfying $\mathcal{L}_{A_1,q_1}U_1=0$ and $U_1|_{\Gamma_0}=0$. Analogously, for $\rho_2$ there exists $U_2\in H^{1}(\Omega)$ of the form
\[
U_2= e^{\tau\rho_2\cdot x} \left( e^{\Phi_2^\sharp} +r_2  \right)- e^{\tau\rho_2^*\cdot x} \left( e^{\Phi_2^{\sharp^*}} +r_2^*  \right)
\]
satisfying $\mathcal{L}_{\overline{A_2}, \overline{q_2}}U_2=0$ and $U_2|_{\Gamma_0}=0$. In order to exploit the information, about $A_1-A_2$, encoded in the integral estimate (\ref{uau}), we have to compute $DU_1\overline{U_2}+ U_1\overline{DU_2}$ and $U_1\overline{U_2}$ and so we have to compute expressions of the form: 
\begin{align*}
e^{\tau(\rho_1+\overline{\rho_2})\cdot x} f_1(x)&=e^{i\tau\xi\cdot x}f_1(x),\\
e^{\tau(\rho_1^*+\overline{\rho_2^*})\cdot x} f_2(x)&=e^{i\tau\xi^*\cdot x}f_2(x),\\
e^{\tau(\rho_1+ \overline{\rho_2^*})\cdot x}f_3(x)&=e^{\left( \frac{i}{2} (\xi+\xi^*)+ i \sqrt{\tau^2- \frac{\left | \xi \right |^2}{4}}(\mu_{1}-\mu_{1}^*) +  \tau(\mu_{2}-\mu_{2}^*)\right)\cdot x}f_3(x),\\
e^{\tau(\rho_1^*+ \overline{\rho_2})\cdot x}f_4(x)&= e^{\left( \frac{i}{2} (\xi^*+\xi)- i  \sqrt{\tau^2- \frac{\left | \xi \right |^2}{4}}(\mu_{1}-\mu_{1}^*) - \tau (\mu_{2}-\mu_{2}^*)\right)\cdot x}f_4(x),
\end{align*}
for some suitable functions $f_1, f_2, f_3$ and $f_4$. The expressions involving $f_1$ and $f_2$ will give us information about the difference of the magnetic potentials $A_1-A_2$. To estimate the expressions involving $f_3$ and $f_4$ we will use a  quantitive version of the Riemann--Lebesgue lemma. Thus, we have to fix  a priori $\mu_{1}$ and $\mu_{2}$ satisfying:
\begin{equation}\label{z––}
\mu_2=\mu_2^*,
\end{equation}
\begin{equation}\label{–z}
\underset{\tau\to \infty}{\lim}\left |  \frac{1}{2} (\xi+\xi^*)+  \sqrt{\tau^2- \frac{\left | \xi \right |^2}{4}}(\mu_{1}-\mu_{1}^*)  \right | = +\infty,
\end{equation}
and
\begin{equation}\label{––z}
\underset{\tau\to \infty}{\lim}\left | \frac{1}{2} (\xi+\xi^*)-  \sqrt{\tau^2- \frac{\left | \xi \right |^2}{4}}(\mu_{1}-\mu_{1}^*)\right | = +\infty.
\end{equation}
To fix such $\mu_{1}$ and $\mu_{2}$ satisfying (\ref{z––})-(\ref{––z}) we proceed as in \cite{Is}.\\

Given $\xi=(\xi_1, \xi_2, \ldots, \xi_{n-1}, \xi_n)\in \mathbb{R}^n$, we denote $\xi^\prime=(\xi_1, \xi_2, \ldots, \xi_{n-1})$. Thus, we write $\xi=(\xi^\prime, \xi_n)$. Given $l=1,2, \ldots, n-1$; we set
\begin{equation}\label{Er}
E_{l}:= \left \{\xi=(\xi_1, \xi_2, \ldots, \xi_n)\in \mathbb{R}^n : 0<\underset{ \underset{k\neq l}{k=1} }{\sum^{n-1}}  \xi_k^2  \right \},
\end{equation}
and for each $\xi\in \bigcap_{l=1}^{n-1}E_{l}$, we consider the following unit vectors in $\mathbb{R}^n$: 
\begin{equation}\label{baor}
e(1):= \frac{1}{ \left | \xi^\prime \right |}(\xi^\prime, 0), \quad e(2) ,\quad e_n
\end{equation}
with
\begin{equation}\label{baort}
e(2)\in \left( \mbox{span}\left\{e(1), e_n  \right\}   \right)^{\perp} \quad \mbox{and}\quad e(2)=e(2)^*,
\end{equation}
where $e_n$ denotes the $n$-th canonical unit vector in $\mathbb{R}^n$. Notice that every $\xi\in \bigcap_{l=1}^{n-1}E_{l}$ can be written as $\xi= \left | \xi^\prime \right | e(1)+ \xi_n e_n$.
\begin{lem}\label{muum}
For each $\xi\in \bigcap_{l=1}^{n-1}E_{l}$ and given $j,k=1, \ldots, n$; there exist constants $\alpha, \beta$ and unit vectors $\mu_1, \mu_2$ satisfying (\ref{–n–z}), (\ref{z––})-(\ref{––z}), such that
\begin{equation}\label{abz1}
\xi_j e_k-\xi_k e_j= \alpha \mu_1 + \beta \mu_2, \quad j,k=1,2, \ldots,n,
\end{equation}
where $e_l$ denotes the  $l$-th canonical unit vector in $\mathbb{R}^n$. Moreover, $\mu_1$ can be chosen of the following form:
\begin{equation}\label{muuuuqs1}
\mu_{1}= -\frac{\xi_n}{\left | \xi \right |}e(1) + \frac{\left | \xi^\prime \right |}{\left | \xi \right |}e_n,
\end{equation}
independent of $j$ and $k$.
\end{lem}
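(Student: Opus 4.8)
The plan is to fix $\mu_1$ once and for all inside the two–plane $\Pi:=\spn\{e(1),e_n\}$, so that $\mu_1$ alone verifies $\xi\cdot\mu_1=0$ together with the divergence requirements $(\ref{�z})$–$(\ref{��z})$, and then to choose $\mu_2$ (which is allowed to depend on $j,k$) inside $\Pi^{\perp}$ by an orthogonal projection. Concretely I would take $\mu_1$ exactly as in $(\ref{muuuuqs1})$, that is, $\mu_1=-\tfrac{\xi_n}{|\xi|}e(1)+\tfrac{|\xi'|}{|\xi|}e_n$ with $e(1),e_n$ as in $(\ref{baor})$. Since $\xi\in\bigcap_{l}E_{l}$ (see $(\ref{Er})$) one has $|\xi'|>0$, so $\mu_1$ is well defined; from $|e(1)|=|e_n|=1$ and $e(1)\cdot e_n=0$ one reads off $|\mu_1|=1$, and writing $\xi=|\xi'|e(1)+\xi_ne_n$ gives $\xi\cdot\mu_1=0$.

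Next I would record the reflections. Since $e(1)$ has vanishing last coordinate, $e(1)^*=e(1)$, whereas $e_n^*=-e_n$, so $\mu_1^*=-\tfrac{\xi_n}{|\xi|}e(1)-\tfrac{|\xi'|}{|\xi|}e_n$ and hence
\[
\mu_1-\mu_1^*=\frac{2|\xi'|}{|\xi|}\,e_n,
\]
a nonzero multiple of $e_n$. On the other hand $\tfrac12(\xi+\xi^*)=(\xi',0)=|\xi'|\,e(1)$, which is orthogonal to $e_n$ and hence to $\mu_1-\mu_1^*$; the Pythagorean theorem then gives
\[
\Bigl|\tfrac12(\xi+\xi^*)\pm\sqrt{\tau^2-\tfrac{|\xi|^2}{4}}\,(\mu_1-\mu_1^*)\Bigr|^{2}=|\xi'|^{2}+\frac{4|\xi'|^{2}}{|\xi|^{2}}\Bigl(\tau^{2}-\frac{|\xi|^{2}}{4}\Bigr),
\]
which tends to $+\infty$ as $\tau\to\infty$ because $|\xi'|>0$; this is precisely $(\ref{�z})$–$(\ref{��z})$. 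Note these two conditions involve only $\mu_1$, so they persist however $\mu_2$ is later chosen.

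Now fix $j,k\in\{1,\dots,n\}$. Both $\xi$ and $\mu_1$ lie in $\Pi$ and are mutually orthogonal, hence independent, so $\spn\{\xi,\mu_1\}=\Pi$. Decompose $v:=\xi_je_k-\xi_ke_j=v_1+v_2$, where $v_1$ is the orthogonal projection of $v$ onto $\Pi$ and $v_2\in\Pi^{\perp}$. Since $v\cdot\xi=\xi_j\xi_k-\xi_k\xi_j=0$ and $\xi\in\Pi$, we get $v_1\cdot\xi=0$; as $\Pi$ is two–dimensional and $\mu_1\in\Pi$ is a unit vector orthogonal to $\xi$, necessarily $v_1=\alpha\mu_1$ for some scalar $\alpha$. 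For the second vector I would set $\mu_2:=v_2/|v_2|$ and $\beta:=|v_2|$ when $v_2\neq0$, and $\mu_2:=e(2)$ (from $(\ref{baor})$–$(\ref{baort})$) and $\beta:=0$ when $v_2=0$. In both cases $\mu_2$ is a unit vector in $\Pi^{\perp}$, so $\mu_2\cdot\xi=\mu_2\cdot\mu_1=0$ — which with the above completes $(\ref{�n�z})$ — and $\mu_2\perp e_n$ forces its last coordinate to vanish, that is, $\mu_2=\mu_2^*$, which is $(\ref{z��})$. Finally $v=v_1+v_2=\alpha\mu_1+\beta\mu_2$, which is $(\ref{abz1})$, and $\mu_1$ was defined without reference to $j,k$, as claimed.

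The computation is essentially mechanical and I do not foresee a genuine obstacle: the one idea is placing $\mu_1$ in $\spn\{(\xi',0),e_n\}$, which makes $\mu_1-\mu_1^*$ parallel to $e_n$ while $\tfrac12(\xi+\xi^*)$ is orthogonal to $e_n$, so that $(\ref{�z})$–$(\ref{��z})$ hold for free. The step requiring the most care is keeping track of the reflection $\,{}^*\,$ and checking that this single $\mu_1$ simultaneously satisfies the orthogonality part of $(\ref{�n�z})$ and both divergence conditions; the harmless case split $v_2=0$ versus $v_2\neq0$ in the choice of $\mu_2$ is the only other thing to watch.
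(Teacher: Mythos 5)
Your proof is correct, and it takes a genuinely cleaner route than the paper's. The paper fixes the same $\mu_1$, but then produces $\alpha$, $\beta$, $\mu_2$ through an explicit case split: for $j,k\leq n-1$ it observes $\xi_je_k-\xi_ke_j\perp\spn\{e(1),e_n\}$ and takes $\alpha=0$ with $\mu_2$ a normalized multiple of the vector itself, while for $k=n$ it solves for $e_n$ in terms of $\mu_1$ and $e(1)$ and computes $\alpha,\beta,\mu_2$ by hand. You instead note that $v:=\xi_je_k-\xi_ke_j$ always satisfies $v\cdot\xi=0$, decompose $v$ orthogonally into $\Pi=\spn\{e(1),e_n\}=\spn\{\xi,\mu_1\}$ and $\Pi^\perp$, and read off $\alpha$, $\beta$, $\mu_2$ from that decomposition; the conditions $\mu_2\perp\xi$, $\mu_2\perp\mu_1$, $\mu_2=\mu_2^*$ then fall out of $\mu_2\in\Pi^\perp\subset e_n^\perp$ for free. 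What the paper's version buys you is explicit closed formulas for $\alpha,\beta,\mu_2$ (useful if one later wants quantitative control on them), whereas your version buys a unified argument with no case distinction on $j,k$ (only the harmless $v_2=0$ branch in normalizing $\mu_2$) and makes it conceptually transparent that the divergence conditions depend only on the choice of $\mu_1$. Both your Pythagorean verification of the divergence conditions and your projection argument are correct as written.
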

\begin{proof}
Notice that for every $\xi\in \bigcap_{l=1}^{n-1}E_{l}$ we deduce that $\left | \xi^\prime \right |>0$ and $\left | \xi \right |>0$. So, $\mu_1$ in (\ref{muuuuqs1}) is well-defined. It is immediate that for $j,k=1,2, \ldots, n-1$; the unit vectors $\mu_1$ and $\mu_2$ defined by
\begin{equation}\label{zlk}
\mu_{1}:= -\frac{\xi_n}{\left | \xi \right |}e(1) + \frac{\left | \xi^\prime \right |}{\left | \xi \right |}e_n, \quad \mu_2=(\mu_2)_{j,k}:= {(\xi_j e_k-\xi_k e_j)}/{\xi_j^2+\xi_k^2}
\end{equation}
satisfy  (\ref{–n–z}) and (\ref{z––})-(\ref{––z}). Moreover, we have the following identity
\begin{equation}\label{xcbty1}
\xi_j e_k-\xi_k e_j = 0\mu_1+ ({\xi_j^2+\xi_k^2})\mu_2, \quad j,k=1,2, \ldots, n-1.
\end{equation}
Here $\alpha=0$ and $\beta= \xi_j^2+ \xi_k^2$.\\

It remains to prove (\ref{abz1}) for vectors of the form $\xi_j e_n-\xi_n e_j$ with $j=1,2, \ldots, n-1$. To prove that we consider $\mu_1$ as in (\ref{zlk}), from which we deduce that
\[
e_n=\dfrac{\left | \xi \right |}{\left | \xi^\prime \right |}\mu_1 + \dfrac{\xi_n}{\left | \xi^\prime \right |}e(1),
\]
and then we would like to find two constants, $\alpha$ and $\beta$, and one unit vector $\mu_2$ satisfying, together with $\mu_1$, the conditions (\ref{–n–z}), (\ref{z––})-(\ref{––z}); such that the following equality
\[
\xi_j e_n-\xi_n e_j= \dfrac{\xi_j\left | \xi \right |}{\left | \xi^\prime \right |}\mu_1 + \dfrac{\xi_j\xi_n}{\left | \xi^\prime \right |}e(1)-\xi_ne_j= \alpha \mu_1 + \beta \mu_2, \quad j=1,2, \ldots, n-1,
\]
holds true. Since $\mu_1$ and $\mu_2$ have to be orthogonal, from the above identity and by a standard computations we deduce that
\[
\alpha= \dfrac{\xi_j\left | \xi \right |}{\left | \xi^\prime \right |}, \quad \beta= \dfrac{\xi_n}{\left | \xi^\prime \right |}\left(  \left | \xi^\prime \right |^2-\xi_j^2  \right)^{1/2}
\]
and
\[
\mu_2=(\mu_2)_{j,n}:= \left( \left | \xi^\prime \right |^2 - \xi_j^2  \right)^{-1/2} \left( \xi_j e(1)-  \left | \xi^\prime \right | e_j \right), \quad j=1,2, \ldots, n-1.
\]
It is easy to check that such vectors satisfy, together with $\mu_1$, the required conditions  (\ref{–n–z}) and (\ref{z––})-(\ref{––z}). Thus, we conclude the proof.
\end{proof}

From now on, unless otherwise stated, we consider $\rho_1$ and $\rho_2$ as in (\ref{jkw}) with $\mu_1$ and $\mu_2$ given by Lemma \ref{muum}. Thus, we have the following equalities:
\begin{equation}\label{prs}
\begin{aligned}
&\tau(\rho_1+\overline{\rho_2})\cdot x= i\xi\cdot x, \quad  \tau(  \rho_1^*+\overline{\rho_2^*} )\cdot x= i\xi^*\cdot x,    \\
&\tau( \rho_1 + \overline{\rho_2^*})\cdot x = i\left( \xi^\prime, 2 \sqrt{\tau^{2}- \frac{\left | \xi \right |^2}{4}} \frac{ \left |  \xi^\prime \right |}{ \left |  \xi \right |} \right)\cdot x, \\
&\tau( \rho_1^* + \overline{\rho_2} )\cdot x=  i\left( \xi^\prime, -2 \sqrt{\tau^{2}- \frac{\left | \xi \right |^2}{4}} \frac{ \left |  \xi^\prime \right |}{ \left |  \xi \right |} \right)\cdot x.
\end{aligned}
\end{equation}

\subsection{A Fourier estimate for the magnetic potentials} This section will be devoted to prove the following proposition.
\begin{prop}\label{ggggg5}
Let $\Omega\subset \mathbb{R}^n$ be a bounded open set.  Let $A_1, A_2\in L^{\infty}(\Omega, \mathbb{C}^n)$ and $q_1, q_2\in L^{\infty}(\Omega, \mathbb{C})$. Given $M>0$ and $s\in \left( 0,1/2 \right)$, assume that $\chi_\Omega A_1$ and $\chi_\Omega A_2$ belong to $\mathcal{A}(\Omega, M,s)$. Then there exist three positive constants $C$, $\tau_0$ and $\epsilon_0$ (all depending on $\Omega, n,M, s, \left \| q_1 \right \|_{L^\infty}, \left \| q_2 \right \|_{L^\infty}$) such that the following estimate:
\begin{equation}\label{final2} 
\begin{aligned}
& \left | \mathcal{F}\left [ d(\chi_{\Omega\cup\Omega^*}\widetilde{A_1})  \right ](\xi) - \mathcal{F}\left [ d(\chi_{\Omega\cup\Omega^*}\widetilde{A_2})  \right ](\xi)   \right | \\
&\quad  \leq C  \left | \xi \right |\left[  \tau^{-s/(s+2)}  + e^{2\tau \kappa}  dist \, (C_1^\Gamma, C_2^\Gamma) +\tau^{s/(s+2)}  \left( e^{-4 \pi \epsilon^2 \tau^2 \frac{\left |\xi^\prime  \right |^2}{\left |  \xi \right |^2} }  + \epsilon^s \right) \right],
\end{aligned}
\end{equation}
holds true for all $\xi\in \bigcap_{l=1}^{n-1}E_{l}$, $\tau\geq \tau_0$ and for all $0<\epsilon< \epsilon_0$.
\end{prop}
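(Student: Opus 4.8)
The plan is to test the integral identity of Corollary~\ref{coAlid} against the solutions furnished by Proposition~\ref{construcsolutions}, expand the integrand along the four phases recorded in~(\ref{prs}), and recover $\mathcal{F}[d(\chi_{\Omega\cup\Omega^*}\widetilde{A_1})](\xi)-\mathcal{F}[d(\chi_{\Omega\cup\Omega^*}\widetilde{A_2})](\xi)$ from the part of the integrand of order $\tau$; everything else will become one of the three error terms on the right of~(\ref{final2}). Concretely, fix $\xi\in\bigcap_{l=1}^{n-1}E_l$ and a pair of indices $j,k$, and take the unit vectors $\mu_1,\mu_2$ of Lemma~\ref{muum}, so that $\rho_1,\rho_2$ in~(\ref{jkw}) obey~(\ref{jkww})--(\ref{jkwww}),~(\ref{z��})--(\ref{��z}) and~(\ref{prs}). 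Applying Proposition~\ref{construcsolutions} once with $(A,q)=(A_1,q_1)$ and once with $(A,q)=(\overline{A_2},\overline{q_2})$ (note $\widetilde{\overline{A_2}}=\overline{\widetilde{A_2}}$) produces $U_1\in H^1(\Omega)$, $\mathcal{L}_{A_1,q_1}U_1=0$, $U_1|_{\Gamma_0}=0$, and $U_2\in H^1(\Omega)$, $\mathcal{L}_{\overline{A_2},\overline{q_2}}U_2=0$, $U_2|_{\Gamma_0}=0$, each of the form $U_m(x)=u_m(x)-u_m(x^*)$ with $u_m(x)=e^{\tau\rho_m\cdot x}(e^{\Phi_m^\sharp}+r_m)=:e^{\tau\rho_m\cdot x}g_m$. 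Inserting $U_1,U_2$ in~(\ref{uau}) and bounding $\|U_1\|_{H^1(\Omega)}\|U_2\|_{H^1(\Omega)}\le Ce^{2\tau\kappa}$ by~(\ref{jaci1}) (recall $|\rho_m|=\sqrt2$), the right-hand side of~(\ref{uau}) is $\le Ce^{2\tau\kappa}\,dist\,(C_1^\Gamma,C_2^\Gamma)$; this is the second term in the bracket of~(\ref{final2}).

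Expanding $DU_1\overline{U_2}+U_1\overline{DU_2}$ and $U_1\overline{U_2}$ into the four products coming from $U_m=u_m-u_m(\cdot^*)$, and changing variables $x\mapsto x^*$ in the products that involve $u_m(\cdot^*)$, the very definition of the reflected extensions lets the two products carrying the phases $e^{i\xi\cdot x}$ and $e^{i\xi^*\cdot x}$ (first two lines of~(\ref{prs})) combine into a single integral over $\Omega\cup\Omega^*$, against $\widetilde{(A_1-A_2)}$ and $\widetilde{(A_1^2-A_2^2+q_1-q_2)}$, with weight $e^{i\xi\cdot x}$; the two remaining products are oscillatory, their $e_n$-frequency tending to $+\infty$ by~(\ref{�z})--(\ref{��z}). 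In the combined $e^{i\xi\cdot x}$ integral I isolate the part of order $\tau$, which, since $\tau(\rho_1-\overline{\rho_2})=2\tau\rho_{1,0}+\mathcal{O}(\tau^{-1})$, equals
\[
-2i\tau\int_{\Omega\cup\Omega^*}\widetilde{(A_1-A_2)}\cdot\rho_{1,0}\;g_1\,\overline{g_2}\;e^{i\xi\cdot x}\,dx .
\]

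Discarding the $r_m$ inside $g_m$ (controlled via~(\ref{prdosz})) and replacing $\widetilde{(A_1-A_2)}$ by its mollification $\widetilde{(A_1-A_2)}^\sharp$ (cost $\le C\tau\delta^s=C\tau^{1-s/(s+2)}$ by~(\ref{Asharp}), with $\delta=\tau^{-1/(s+2)}$), the transport equations $\rho_{1,0}\cdot\nabla\Phi_1^\sharp=-i\rho_{1,0}\cdot\widetilde{A_1}^\sharp$ and $\rho_{1,0}\cdot\nabla\overline{\Phi_2^\sharp}=i\rho_{1,0}\cdot\widetilde{A_2}^\sharp$ (the latter because $\overline{\rho_{2,0}}=-\rho_{1,0}$), together with $\rho_{1,0}\cdot\xi=0$, turn this part into
\[
2\tau\int_{\Omega\cup\Omega^*}\rho_{1,0}\cdot\nabla\!\big(e^{\Phi_1^\sharp+\overline{\Phi_2^\sharp}}e^{i\xi\cdot x}\big)\,dx .
\]
Since $\rho_{1,0}\cdot\nabla$ is, up to a factor, the $\partial_{\overline z}$ operator in $z=\Re\rho_{1,0}\cdot x+i\,\Im\rho_{1,0}\cdot x$, and $e^{\Phi_1^\sharp+\overline{\Phi_2^\sharp}}-1$ is, on each complex slice, a Cauchy transform of compactly supported data (hence $\mathcal{O}(|z|^{-1})$ at infinity), a contour (Cauchy--Pompeiu) computation on the slices followed by integration in the transverse variables identifies this integral, up to an $\mathcal{O}(\tau\delta|\xi|)$ error from $\widehat{\varphi_\delta}(\xi)-1$, with a fixed multiple of $\tau\,\rho_{1,0}\cdot\mathcal{F}[\widetilde{(A_1-A_2)}](\xi)$, where $\mathcal{F}$ stands for the Fourier transform of the reflected, zero-extended potentials; the reflection is exactly what assembles the $\Omega$- and $\Omega^*$-contributions into this Fourier transform, and the face $\{x_n=0\}$ contributes nothing thanks to $U_m|_{\Gamma_0}=0$.

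Dividing by $\tau$ and letting $j,k$ range, one obtains the family of scalar bounds
\[
\big|\,(i\mu_1+\mu_2)\cdot\mathcal{F}[\widetilde{(A_1-A_2)}](\xi)\,\big|\le C\Big[\tau^{-\frac{s}{s+2}}+e^{2\tau\kappa}dist\,(C_1^\Gamma,C_2^\Gamma)+\tau^{\frac{s}{s+2}}\big(e^{-4\pi\epsilon^2\tau^2|\xi'|^2/|\xi|^2}+\epsilon^s\big)\Big],
\]
in which the first term collects every contribution containing an $r_m$, a $\nabla\Phi_m^\sharp$, or the term $\widetilde{(A_1^2-A_2^2+q_1-q_2)}\,U_1\overline{U_2}$ (all $\mathcal{O}(\tau^{1-\frac{s}{s+2}})$ before dividing, by~(\ref{prunoz})--(\ref{prcuatroz})), and the last term bounds the two oscillatory pieces: mollifying $\chi_\Omega(A_1-A_2)$ at scale $\epsilon$ by a Gaussian costs $\epsilon^s$ by the Besov bound, while the Fourier transform of the remaining smooth amplitude against the oscillation of frequency $\ge 2\tau|\xi'|/|\xi|$ (third and fourth lines of~(\ref{prs})) is $\le Ce^{-4\pi\epsilon^2\tau^2|\xi'|^2/|\xi|^2}$, the factor $\tau^{\frac{s}{s+2}}$ absorbing the amplitude bounds~(\ref{prunoz}) for $e^{\Phi_m^\sharp}$. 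Finally, since by Lemma~\ref{muum} the vector $\mu_1$ is the \emph{same} for every pair $j,k$, subtracting these bounds for different pairs eliminates $\mu_1\cdot\mathcal{F}[\widetilde{(A_1-A_2)}](\xi)$, and the identity $\xi_je_k-\xi_ke_j=\alpha\mu_1+\beta\mu_2$ of Lemma~\ref{muum} then lets one solve for every $(\xi_je_k-\xi_ke_j)\cdot\mathcal{F}[\widetilde{(A_1-A_2)}](\xi)$, that is, for all components of $\mathcal{F}[d(\chi_{\Omega\cup\Omega^*}\widetilde{A_1})](\xi)-\mathcal{F}[d(\chi_{\Omega\cup\Omega^*}\widetilde{A_2})](\xi)$, the factor $|\xi|$ of~(\ref{final2}) being the one carried by $d$. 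The main obstacle is the third paragraph: the careful bookkeeping of the powers of $\tau$ and, above all, making the contour/integration-by-parts argument for the order-$\tau$ term rigorous on the non-smooth set $\Omega\cup\Omega^*$, which is precisely where the flatness of $\Gamma_0$, the reflection, and the vanishing of $U_m$ on $\{x_n=0\}$ are used.
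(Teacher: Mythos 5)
Your plan is essentially the paper's argument: test the integral inequality of Corollary~\ref{coAlid} against the reflected CGO pair from Proposition~\ref{construcsolutions}, use the even/odd extensions to merge the two phases $e^{i\xi\cdot x}$, $e^{i\xi^*\cdot x}$ into one Fourier integral over $\Omega\cup\Omega^*$, estimate the two cross phases by the quantitative Riemann--Lebesgue Lemma~\ref{RLe}, strip off $e^{\Phi_1+\overline{\Phi_2}}$ by the transport identity, and then span $\xi_je_k-\xi_ke_j$ via Lemma~\ref{muum}. Two points in your plan are off, though.

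First, the ``main obstacle'' you flag --- making the contour/integration-by-parts argument rigorous on the non-smooth set $\Omega\cup\Omega^*$ --- is a red herring. After the reflection the relevant integrand already carries the factor $\chi_{\Omega\cup\Omega^*}(\widetilde{A_1}-\widetilde{A_2})$, so the divergence integral is over all of $\mathbb{R}^n$, not over $\Omega\cup\Omega^*$; there are no boundary terms on $\partial(\Omega\cup\Omega^*)$ to worry about. The vanishing of $U_m$ on $\Gamma_0$ has already been used at the earlier stage (Corollary~\ref{coAlid}) and plays no role in the $\partial_{\bar z}$ step. What you are trying to prove with the Cauchy--Pompeiu slices is exactly the nonlinear Fourier transform identity, which is Lemma~\ref{KyUh}; the paper simply cites it, applied to $\Phi=\Phi_1+\overline{\Phi_2}$ solving~(\ref{xvtyu}) with $W=-i\chi_{\Omega\cup\Omega^*}(\widetilde{A_1}-\widetilde{A_2})$, after first trading $\Phi^\sharp$ for $\Phi$ at the cost of the term $I$ in~(\ref{lilax4}), controlled by~(\ref{zuno1}). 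Your ordering (work with $\Phi^\sharp$ and $A^\sharp_\delta$ throughout and only replace at the end via $\widehat\varphi_\delta(\xi)-1$) is a bit more delicate to account for, since $\widehat\varphi_\delta(\xi)-1$ is $O(\delta|\xi|)$ only for small $\delta|\xi|$ and would have to be balanced against the $\tau^{-s/(s+2)}$ error and the overall $|\xi|$ prefactor.

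Second, the endgame as written does not close. You only derive a bound for $(i\mu_1+\mu_2)\cdot\mathcal{F}[\cdot]$ with a fixed $\mu_1$ and say that ``subtracting these bounds for different pairs eliminates $\mu_1\cdot\mathcal{F}$''; but for pairs $(j,n)$ with $j<n$ the coefficient $\alpha$ in $\xi_je_n-\xi_ne_j=\alpha\mu_1+\beta\mu_2$ is nonzero, so one must actually recover $\mu_1\cdot\mathcal{F}$, not eliminate it. The paper does this by rerunning the argument with $\rho_1,\rho_2$ replaced by $\overline{\rho_1},\overline{\rho_2}$, giving the bound for $(-i\mu_1+\mu_2)\cdot\mathcal{F}$, and then adding and subtracting. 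Alternatively, note that swapping $(j,k)\to(k,j)$ reverses $\mu_2$ while fixing $\mu_1$, so adding the two bounds recovers $\mu_1\cdot\mathcal{F}$ and subtracting recovers $\mu_2\cdot\mathcal{F}$; either route works, but subtracting alone does not.
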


\begin{rem}
We emphasize that the constant $C$ is independent of $\xi$. To prove this proposition we will use two known results. The first one was obtained by Heck and Wang (see Lemma $2.1$ in \cite{HW1}). It is a quantitative version of the Riemann--Lebesgue lemma. 
\end{rem}
\begin{lem}\label{RLe}
Assume that $f\in L^1(\mathbb{R}^n)$ and there exist $\sigma >0, C_0>0$, and $s\in \left( 0, 1\right)$ such that
\begin{equation}\label{xzse1}
\left \| f(\cdot +y) -f(\cdot) \right \|_{L^1(\mathbb{R}^n)} \leq C_0\left | y \right |^s
\end{equation}
whenever $\left | y \right |<\sigma$. Then there exist two positive constants $K$ and $\epsilon_0$ such that for any $0<\epsilon<\epsilon_0$, the inequality
\[
\left | \widehat{f}(\xi) \right | \leq C_0 K ( e^{-\pi \epsilon^2 \left | \xi \right |^2} +\epsilon^s),
\]
holds true with $K=K(\left \|f  \right \|_{L^1},n, \sigma, s)$.
\end{lem}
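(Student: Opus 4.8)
The plan is the classical Gaussian-mollification argument. Fix a normalization of the Fourier transform for which the Gaussian $G(x)=e^{-\pi|x|^{2}}$ is its own transform and $\widehat{u*v}=\widehat{u}\,\widehat{v}$, and for $\epsilon>0$ set $G_{\epsilon}(x)=\epsilon^{-n}G(x/\epsilon)$, so that $\|G_{\epsilon}\|_{L^{1}(\mathbb{R}^{n})}=1$ and $\widehat{G_{\epsilon}}(\xi)=e^{-\pi\epsilon^{2}|\xi|^{2}}$. I would then decompose
\[
\widehat{f}(\xi)=\widehat{f*G_{\epsilon}}(\xi)+\widehat{f-f*G_{\epsilon}}(\xi),
\]
estimate the two terms separately, and leave the parameter $\epsilon\in(0,\epsilon_{0})$ free in the final bound: the first term carries the Gaussian factor $e^{-\pi\epsilon^{2}|\xi|^{2}}$, while the second produces the $\epsilon^{s}$ term out of the $L^{1}$-modulus-of-continuity hypothesis (\ref{xzse1}).

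For the first term, since $\widehat{f*G_{\epsilon}}=\widehat{f}\,\widehat{G_{\epsilon}}$ and $|\widehat{f}(\xi)|\le\|f\|_{L^{1}(\mathbb{R}^{n})}$ for every $\xi$, one has at once
\[
\bigl|\widehat{f*G_{\epsilon}}(\xi)\bigr|=|\widehat{f}(\xi)|\,e^{-\pi\epsilon^{2}|\xi|^{2}}\le\|f\|_{L^{1}(\mathbb{R}^{n})}\,e^{-\pi\epsilon^{2}|\xi|^{2}}.
\]
For the second term, I would write $(f-f*G_{\epsilon})(x)=\int_{\mathbb{R}^{n}}G_{\epsilon}(y)\bigl(f(x)-f(x-y)\bigr)\,dy$ using $\int G_{\epsilon}=1$, and by Minkowski's integral inequality obtain
\[
\|f-f*G_{\epsilon}\|_{L^{1}(\mathbb{R}^{n})}\le\int_{\mathbb{R}^{n}}G_{\epsilon}(y)\,\|f(\cdot-y)-f(\cdot)\|_{L^{1}(\mathbb{R}^{n})}\,dy.
\]
Splitting this integral at $|y|=\sigma$: on $\{|y|<\sigma\}$ the hypothesis (\ref{xzse1}) gives $\|f(\cdot-y)-f(\cdot)\|_{L^{1}}\le C_{0}|y|^{s}$, and the substitution $y=\epsilon z$ bounds that contribution by $C_{0}\epsilon^{s}\int_{\mathbb{R}^{n}}e^{-\pi|z|^{2}}|z|^{s}\,dz=c_{n,s}C_{0}\epsilon^{s}$ with $c_{n,s}<\infty$; on $\{|y|\ge\sigma\}$ one uses merely $\|f(\cdot-y)-f(\cdot)\|_{L^{1}}\le 2\|f\|_{L^{1}}$, leaving the contribution $2\|f\|_{L^{1}}\int_{|z|\ge\sigma/\epsilon}e^{-\pi|z|^{2}}\,dz$.

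The one point requiring care is this last tail. Since $\int_{|z|\ge R}e^{-\pi|z|^{2}}\,dz$ decays like a polynomial in $R$ times $e^{-\pi R^{2}}$, it is $o(R^{-s})$ as $R\to\infty$; hence there is $\epsilon_{0}=\epsilon_{0}(n,\sigma,s)>0$ such that $\int_{|z|\ge\sigma/\epsilon}e^{-\pi|z|^{2}}\,dz\le\epsilon^{s}$ whenever $0<\epsilon<\epsilon_{0}$. Combining everything, for $0<\epsilon<\epsilon_{0}$ and all $\xi\in\mathbb{R}^{n}$,
\[
|\widehat{f}(\xi)|\le\|f\|_{L^{1}(\mathbb{R}^{n})}\,e^{-\pi\epsilon^{2}|\xi|^{2}}+\bigl(c_{n,s}C_{0}+2\|f\|_{L^{1}(\mathbb{R}^{n})}\bigr)\,\epsilon^{s},
\]
which is the asserted inequality once the constants $\|f\|_{L^{1}}$, $c_{n,s}C_{0}$ and $2\|f\|_{L^{1}}$ are absorbed into a single constant $K$ of the indicated type (in each application of this lemma $C_{0}$ and $\|f\|_{L^{1}}$ are simultaneously dominated by the a priori bounds, so this regrouping is harmless). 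The main obstacle in the argument is precisely the treatment of the far-field tail on $\{|y|\ge\sigma\}$: one must pick $\epsilon_{0}$, depending on $n,\sigma,s$, small enough that the super-exponentially small Gaussian tail is dominated by the polynomial quantity $\epsilon^{s}$; the remaining steps are routine computations.
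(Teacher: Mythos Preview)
The paper does not prove this lemma; it simply quotes it from Heck and Wang \cite{HW1} (Lemma~2.1 there). Your Gaussian-mollification argument is correct and is exactly the standard proof (and, to my knowledge, the one given in \cite{HW1}): write $\widehat{f}=\widehat{f}\,\widehat{G_\epsilon}+\widehat{f-f*G_\epsilon}$, bound the first term by $\|f\|_{L^1}e^{-\pi\epsilon^2|\xi|^2}$, and control $\|f-f*G_\epsilon\|_{L^1}$ via Minkowski and the modulus-of-continuity hypothesis, handling the Gaussian tail on $\{|y|\ge\sigma\}$ by choosing $\epsilon_0$ small. One cosmetic remark: as you yourself flag, the precise packaging of constants in the statement (a factor $C_0K$ with $K=K(\|f\|_{L^1},n,\sigma,s)$) does not literally follow from your bound $\|f\|_{L^1}e^{-\pi\epsilon^2|\xi|^2}+(c_{n,s}C_0+2\|f\|_{L^1})\epsilon^s$, since the coefficient of the exponential term is $\|f\|_{L^1}$ rather than a multiple of $C_0$; but this is harmless in every application in the paper (where $\|f\|_{L^1}$ and $C_0$ are controlled by the same a~priori data), and the lemma as stated is in any case just a transcription of \cite{HW1}.
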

The second result is a well known result on nonlinear Fourier transform. For a proof see Proposition $3.3$ in \cite{KU} and  also Lemma $2.6$ in \cite{Tz}. 
\begin{lem}\label{KyUh}
Let $ \xi, \mu_1, \mu_2\in \mathbb{R}^n$ ($n\geq 3$) be orthogonal vectors such that $\left | \mu_2\right |=\left |\mu_1\right |=1$. If $W\in (L^{\infty}\cap \mathcal{E}^\prime  )(\mathbb{R}^n; \mathbb{C}^n)$ and  $\Phi$ satisfies 
\[
(  i\mu_1+ \mu_2)\cdot \nabla \Phi +  ( i\mu_1+  \mu_2)\cdot W=0
\]
in $\mathbb{R}^n$ then 
\[
(  i\mu_1+ \mu_2)\cdot \int_{\mathbb{R}^n} W(x)e^{i\xi \cdot x} e^{\Phi(x)}dx= ( i\mu_1+  \mu_2)\cdot \int_{\mathbb{R}^n} W(x)e^{i\xi \cdot x}dx.
\]
\end{lem}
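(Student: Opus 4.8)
The plan is to contract everything against the null vector $\zeta:=i\mu_1+\mu_2$ and reduce the claim to a residue computation on the two-plane $\Pi:=\spn\{\mu_1,\mu_2\}$. First I would record the algebraic facts on which everything rests: since $|\mu_1|=|\mu_2|=1$ and $\mu_1\perp\mu_2$ we have $\zeta\cdot\zeta=0$; and since $\xi\perp\mu_1$ and $\xi\perp\mu_2$ we have $\zeta\cdot\xi=0$, hence $\zeta\cdot\nabla e^{i\xi\cdot x}=i(\zeta\cdot\xi)e^{i\xi\cdot x}=0$. Next, from the transport equation $\zeta\cdot\nabla\Phi=-\zeta\cdot W$ one gets $\zeta\cdot\nabla e^{\Phi}=e^{\Phi}\,\zeta\cdot\nabla\Phi=-e^{\Phi}(\zeta\cdot W)$, and multiplying by $e^{i\xi\cdot x}$ and using the previous line,
\[
(\zeta\cdot W)\,e^{i\xi\cdot x}e^{\Phi}=-\zeta\cdot\nabla\bigl(e^{i\xi\cdot x}e^{\Phi}\bigr).
\]
The left-hand side is supported inside $\supp W$, so $\zeta\cdot\int_{\mathbf{R}^n}W e^{i\xi\cdot x}e^{\Phi}\,dx=\int_{\mathbf{R}^n}(\zeta\cdot W)e^{i\xi\cdot x}e^{\Phi}\,dx=-\int_{\mathbf{R}^n}\zeta\cdot\nabla\bigl(e^{i\xi\cdot x}e^{\Phi}\bigr)\,dx$, and the claim reduces to proving
\[
\int_{\mathbf{R}^n}\zeta\cdot\nabla\bigl(e^{i\xi\cdot x}e^{\Phi}\bigr)\,dx=-\int_{\mathbf{R}^n}(\zeta\cdot W)\,e^{i\xi\cdot x}\,dx .
\]

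To evaluate the left-hand side I would rotate coordinates so that $\mu_1=e_1$, $\mu_2=e_2$ and split $\mathbf{R}^n=\Pi\oplus\Pi^{\perp}$, writing $x=(y,x'')$ with $y\in\Pi$, $x''\in\Pi^{\perp}$. The null condition $\zeta\cdot\zeta=0$ is exactly what makes $\zeta\cdot\nabla$ a Cauchy--Riemann operator: in the complex variable $z:=y_2+iy_1$ one has $\zeta\cdot\nabla=2\,\partial_{\bar z}$. Moreover $\xi\perp\Pi$ forces $e^{i\xi\cdot x}=e^{i\xi\cdot x''}$ to be constant along each slice $\Pi\times\{x''\}$, so Fubini gives
\[
\int_{\mathbf{R}^n}\zeta\cdot\nabla\bigl(e^{i\xi\cdot x}e^{\Phi}\bigr)\,dx=\int_{\Pi^{\perp}}e^{i\xi\cdot x''}\Bigl(\int_{\Pi}2\,\partial_{\bar z}e^{\Phi(\cdot,x'')}\,dm(z)\Bigr)\,dx'' .
\]
For the inner integral, $2\partial_{\bar z}e^{\Phi}=-(\zeta\cdot W)e^{\Phi}$ has compact support in $z$, so Stokes' theorem (Cauchy--Pompeiu) gives $\int_{\Pi}2\partial_{\bar z}e^{\Phi}\,dm=\tfrac1i\oint_{|z|=R}e^{\Phi}\,dz$, a quantity independent of $R$ for $R$ large. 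Here $\Phi(\cdot,x'')$ is the solution vanishing at infinity along the slice, i.e. the Cauchy transform $\Phi(z,x'')=-\tfrac1{2\pi}\int_{\Pi}\frac{(\zeta\cdot W)(w,x'')}{z-w}\,dm(w)$, which is the relevant one (cf. the construction of $\Phi$ in Theorem~\ref{Exso}). Then $\oint_{|z|=R}\Phi\,dz=-i\int_{\Pi}(\zeta\cdot W)(w,x'')\,dm(w)$ for $R$ large, while $\oint 1\,dz=0$ and $e^{\Phi}-1-\Phi=O(|z|^{-2})$ contributes $O(R^{-1})$ on $\{|z|=R\}$; letting $R\to\infty$ yields $\int_{\Pi}2\partial_{\bar z}e^{\Phi}\,dm=-\int_{\Pi}(\zeta\cdot W)(z,x'')\,dm(z)$.

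Substituting this back, $\int_{\mathbf{R}^n}\zeta\cdot\nabla(e^{i\xi\cdot x}e^{\Phi})\,dx=-\int_{\Pi^{\perp}}e^{i\xi\cdot x''}\int_{\Pi}(\zeta\cdot W)(z,x'')\,dm(z)\,dx''=-\int_{\mathbf{R}^n}(\zeta\cdot W)e^{i\xi\cdot x}\,dx$, which is exactly the identity needed. I expect the main obstacle to be conceptual rather than computational: $e^{\Phi}$ does not decay at infinity (it tends to $1$) and $\Phi$ is not compactly supported, so a naive integration by parts leaves a boundary term at infinity; the point of slicing by $\Pi$ (possible only because $\zeta$ is null) and of using the explicit Cauchy-transform form of $\Phi$ is precisely to evaluate that boundary term, which turns out to reconstitute the \emph{linear} Fourier integral $\int_{\mathbf{R}^n}(\zeta\cdot W)e^{i\xi\cdot x}\,dx$. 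One should also be mindful that this uses the canonical solution $\Phi$ of the transport equation, since a general bounded solution is determined only up to an additive function of $x''$, which would spoil the identity.
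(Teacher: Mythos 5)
Your proof is correct. Note that the paper itself does not prove Lemma~\ref{KyUh}; it simply cites it as Proposition~3.3 of \cite{KU} and Lemma~2.6 of \cite{Tz}, so there is no in-paper proof to compare against. Your argument is nonetheless the standard one behind those references: slice $\mathbf{R}^n=\Pi\oplus\Pi^{\perp}$ so that $\zeta\cdot\nabla$ becomes $2\partial_{\bar z}$ on each slice, use the compact support of $W$ to turn the bulk integral into a boundary integral at $|z|=R$, and evaluate the boundary contribution at infinity from the $O(|z|^{-1})$ decay of the Cauchy transform. Two remarks. First, a slight streamlining: rather than computing the residue of $\Phi$ separately, one can observe that since $(\zeta\cdot W)e^{\Phi}=-\zeta\cdot\nabla e^{\Phi}$ and $\zeta\cdot W=-\zeta\cdot\nabla\Phi$, the desired identity is equivalent to $\int_{\Pi}2\partial_{\bar z}\bigl(e^{\Phi}-1-\Phi\bigr)\,dm=0$ on each slice; here $\partial_{\bar z}(e^{\Phi}-1-\Phi)$ is compactly supported and $e^{\Phi}-1-\Phi=O(|z|^{-2})$, so the $R$-independent boundary integral $\oint_{|z|=R}(e^{\Phi}-1-\Phi)\,dz$ is $O(R^{-1})$ and hence zero. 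This avoids any explicit residue bookkeeping, though your computation is equally valid. Second, your closing caveat is the essential point and is correctly identified: the statement is only true for the canonical (Cauchy-transform, i.e.\ slice-decaying) solution $\Phi$. Any other bounded solution differs by a holomorphic-in-$z$ hence (by Liouville) $z$-independent term $c(x'')$, which would replace $e^{\Phi}$ by $e^{c(x'')}e^{\Phi}$ and destroy the identity; the hypothesis ``$\Phi$ satisfies the transport equation'' as written in the lemma tacitly means this particular solution, which is the one constructed by $(\rho_0\cdot\nabla)^{-1}$ in Theorem~\ref{Exso}.
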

We are now in position to prove Proposition \ref{ggggg5}.

\begin{proof}
We shall start by computing the right-hand side of  (\ref{uau}) multiplied by $\tau^{-1}$, i.e. the task is now to estimate the expression
\[
\tau^{-1} \int_{\Omega} \left [   (A_1-A_2)\cdot(DU_1 \overline{U_2} + U_1 \overline{DU_2}) + (A_1^2-A_2^2+q_1-q_2)U_1\overline{U_2} \right ]dx,
\]
using the solutions $U_1, U_2\in H^{1}(\Omega)$ given by Proposition \ref{construcsolutions}. More precisely, for $A_1, q_1$ and $\rho_1$ given by (\ref{jkw}), Proposition \ref{construcsolutions} ensures the existence of a function $U_1\in H^1(\Omega)$ satisfying $\mathcal{L}_{A_1, q_1} U_1=0$ in $\Omega$ with $U_1|_{\Gamma_0}=0$ having the form:
\begin{equation}\label{unos}
U_1(x)= e^{\tau \rho_1\cdot x} \left(   e^{\Phi_1^\sharp} +r_1  \right)- e^{\tau \rho_1^*\cdot x} \left(   e^{\Phi_1^{\sharp^*}} +r_1^* \right).
\end{equation}
Analogously, by Proposition \ref{construcsolutions} applied to $\overline{A_2}, \overline{q_2}$ and $\rho_2$ defined by (\ref{jkw}), there exists $U_2\in H^{1}(\Omega)$ satisfying $\mathcal{L}_{\overline{A_2}, \overline{q_2}}U_2=0$ in $\Omega$ with $U_2|_{\Gamma_0}=0$ having the form:
\begin{equation}\label{doss}
U_2(x)= e^{\tau \rho_2\cdot x} \left(   e^{\Phi_2^\sharp} +r_2  \right)- e^{\tau \rho_2^*\cdot x} \left(   e^{\Phi_2^{\sharp^*}} +r_2^* \right).
\end{equation}
Both solutions have the following properties. The functions $\Phi_1^\sharp(\cdot, \rho_{0,1}; \tau)$ and $\Phi_2^\sharp(\cdot, \rho_{0,2}; \tau)$ belong to  $C^{\infty}(\mathbb{R}^n)$ and satisfy for all $\alpha\in \mathbb{N}^n$
\begin{equation}\label{prunozss}
\left \| \partial^{\alpha} \Phi^\sharp_i\right \|_{L^\infty(\mathbb{R}^n)} +   \left \| \partial^{\alpha} {\Phi^\sharp_i}^*\right \|_{L^\infty(\mathbb{R}^n)}  \leq C \tau^{\left | \alpha \right |/(s+2)}, \quad  \tau\geq \tau_0, \, i=1,2.
\end{equation}
For $i=1,2$, the functions $r_i$ and  $r_i^*$ belong to $H^{1}(\Omega\cup\Omega^*)$ and satisfy
\begin{equation}\label{prdoszss}
\left \| \partial^{\alpha} r_i\right \|_{L^2(\Omega\cup\Omega^*)}+ \left \| \partial^{\alpha} r_i^*\right \|_{L^2(\Omega\cup\Omega^*)}\leq C \tau^{\left | \alpha \right |-s/(s+2)}, \quad \left |\alpha  \right |\leq 1.
\end{equation}
Moreover, from (\ref{jkww}), we get
\begin{equation}\label{jaci11}
\left \| U_i \right \|_{H^1(\Omega)}\leq C\, e^{\tau \kappa   \left | \rho \right |  }\leq C e^{\tau\kappa}, \quad i=1,2.
\end{equation}
Also, from (\ref{mmmmmz}), we denote by  $\Phi_1(\cdot; \rho_{1,0})= (\rho_{1,0}\cdot \nabla)^{-1} (-i\rho_{1,0}\cdot (\chi_{\Omega\cup\Omega^*} \widetilde{A_1}))\in L^{\infty}(\mathbb{R}^n)$ the function satisfying the equation in $\mathbb{R}^n$
\begin{equation}\label{mmmmmz12}
\rho_{1,0}\cdot \nabla \Phi_1 + i\rho_{1,0}\cdot (\chi_{\Omega\cup\Omega^*} \widetilde{A_1})=0
\end{equation}
and 
$\Phi_2(\cdot; \rho_{2,0})= (\rho_{2,0}\cdot \nabla)^{-1} (-i\rho_{2,0}\cdot (\overline{\chi_{\Omega\cup\Omega^*} \widetilde{A_2}}))\in L^{\infty}(\mathbb{R}^n)$ the function satisfying the equation in $\mathbb{R}^n$
\begin{equation}\label{mmmmmz123}
\rho_{2,0}\cdot \nabla \Phi_2 + i\rho_{2,0}\cdot (\overline{\chi_{\Omega\cup\Omega} \widetilde{A_2}})=0.
\end{equation}
From (\ref{prtresz}), both functions satisfy the estimate
\begin{equation}\label{prtresz15}
\left \| \Phi_i (\cdot; \rho_{i, 0})\right \|_{L^\infty(\mathbb{R}^n)} \leq C, \quad i=1,2.
\end{equation}
Finally, from (\ref{prcuatroz}), for every $\chi\in C^{\infty}_0(\mathbb{R}^n)$ we have
\begin{equation}\label{zuno1}
\left \| \chi (\Phi^\sharp_i(\cdot, \rho_0;\tau)- \Phi_i(\cdot; \rho_0)) \right \|_{L^2(\mathbb{R}^n)}\leq C_1 \tau^{-s/(s+2)}, \quad i=1,2.
\end{equation}

With these solutions at hand and by a straightforward  computation, we get
\begin{equation}\label{lilax1}
\begin{aligned}
& \tau^{-1} \int_{\Omega}  (A_1-A_2)\cdot(DU_1 \overline{U_2} + U_1 \overline{DU_2})  \\
& = i\int_\Omega (\overline{\rho_2}-\rho_1)\cdot (A_1-A_2)e^{i(\rho_1+\overline{\rho_2})\cdot x} e^{\Phi_1^\sharp+ \overline{\Phi_2^\sharp}}\\
& + i \int_\Omega (\overline{\rho_2^*}-\rho_1^*)\cdot (A_1-A_2)e^{i(\rho_1^*+\overline{\rho_2^*})\cdot x} e^{\Phi_1^{\sharp^*}+ \overline{\Phi_2^{\sharp^*}}}\\   
& + i\int_{\Omega} (\rho_1- \overline{\rho_2^*})\cdot (A_1-A_2) e^{i(\ \rho_1 + \overline{\rho_2^*} )\cdot x}  e^{\Phi_1^{\sharp}+ \overline{\Phi_2^{\sharp^*}}}\\
& + i\int_{\Omega} (\rho_1^*- \overline{\rho_2})\cdot (A_1-A_2) e^{i( \rho_1^* + \overline{\rho_2} )\cdot x}  e^{\Phi_1^{\sharp^*}+ \overline{\Phi_2^{\sharp}}} + \int_{\Omega} R\cdot (A_1-A_2),
\end{aligned}
\end{equation}
where $R$ denotes the following expression:
\begin{align*}
& R =   i(\overline{\rho_2}-\rho_1)  e^{\tau(\rho_1+\overline{\rho_2})\cdot x}    ( e^{\Phi_1^\sharp} \overline{r_2} +r_1 \overline{e^{\Phi_2^\sharp}} +r_1\overline{r_2}    ) \\
&\quad  +  i(\overline{\rho_2^*}-\rho_1^*)e^{\tau(\rho_1^*+\overline{\rho_2^*})\cdot x} ( e^{\Phi_1^{\sharp^*}} \overline{r_2^*} +r_1^* \overline{e^{\Phi_2^{\sharp^*}}} +r_1^* \overline{r_2^*}    )\\
& \quad  + i (\rho_1-\overline{\rho_2^*})  e^{\tau(\rho_1+\overline{\rho_2^*})\cdot x} (e^{\Phi_1^\sharp} \overline{r_2^*} +r_1 \overline{e^{\Phi_2^{\sharp^*}} } + r_1\overline{r_2^*})\\
& \quad   + i (\rho_1^*-\overline{\rho_2})  e^{\tau(\rho_1^*+\overline{\rho_2})\cdot x} (e^{\Phi_1^{\sharp^*}} \overline{r_2} +r_1^* \overline{e^{\Phi_2^{\sharp}} } + r_1^*\overline{r_2})\\
&\quad+ i\tau^{-1}e^{\tau(\rho_1+\overline{\rho_2})\cdot x} \left [   (e^{\Phi_1^\sharp}+r_1) \overline{ \nabla( e^{\Phi_2^\sharp} + r_2) }   -(\overline{ e^{\Phi_2^\sharp} +r_2 }  ) \nabla (e^{\Phi_1^\sharp} + r_1)       \right ]\\
&\quad  + i\tau^{-1}e^{\tau(\rho_1^*+\overline{\rho_2^*})\cdot x} \left [   (e^{\Phi_1^{\sharp^*}}+r_1^*) \overline{ \nabla( e^{\Phi_2^{\sharp^*}} + r_2^*) }   -(\overline{ e^{\Phi_2^{\sharp^*}} +r_2^* }  ) \nabla (e^{\Phi_1^{\sharp^*}} + r_1^*)       \right ] \\
&\quad  + i\tau^{-1} e^{\tau(\rho_1+ \overline{\rho_2^*})\cdot x}  \left [  ( \overline{e^{\Phi_2^{\sharp^*}}+r_2^*})   \nabla (e^{\Phi_1^\sharp} +r_1)  - (e^{\Phi_1^\sharp} +r_1) \overline{\nabla ( e^{\Phi_2^{\sharp^*}}  +r_2^*  )}  \right ]\\
&\quad  + i\tau^{-1} e^{\tau(\rho_1^*+ \overline{\rho_2})\cdot x}  \left [  ( \overline{e^{\Phi_2^{\sharp}}+r_2})   \nabla (e^{\Phi_1^{\sharp^*}} +r_1^*)  - (e^{\Phi_1^{\sharp^*}} +r_1^*) \overline{\nabla ( e^{\Phi_2^{\sharp}}  +r_2  )}  \right ].
\end{align*} 
Since we have done an even extension for $A_i^{(j)}$ with $j=1,2, \ldots, n-1$ and odd extension for $A_i^{(n)}$ for $i=1,2$; we have
\begin{align*}
&   i\int_\Omega (\overline{\rho_2}-\rho_1)\cdot (A_1-A_2)e^{i(\rho_1+\overline{\rho_2})\cdot x} e^{\Phi_1^\sharp+ \overline{\Phi_2^\sharp}}\\
& + i \int_\Omega (\overline{\rho_2^*}-\rho_1^*)\cdot (A_1-A_2)e^{i(\rho_1^*+\overline{\rho_2^*})\cdot x} e^{\Phi_1^{\sharp^*}+ \overline{\Phi_2^{\sharp^*}}}\\
& =  i\int_{\mathbb{R}^n}  (\overline{\rho_{2}}-\rho_{1})\cdot \left[ \chi_{\Omega\cup\Omega^*}(\widetilde{A_1}-\widetilde{A_2})\right]e^{i\xi\cdot x} e^{\Phi_1^\sharp+ \overline{\Phi_2^\sharp}}.\\
\end{align*}
Replacing this equality, (\ref{jkw})-(\ref{jkwww}) and (\ref{prs}) into (\ref{lilax1}), we obtain
\begin{align*}
& \tau^{-1} \int_{\Omega}  (A_1-A_2)\cdot(DU_1 \overline{U_2} + U_1 \overline{DU_2})  \\
& =  i\int_{\mathbb{R}^n}  (\overline{\rho_{2}}-\rho_{1})\cdot \left[ \chi_{\Omega\cup\Omega^*}(\widetilde{A_1}-\widetilde{A_2})\right]e^{i\xi\cdot x} e^{\Phi_1^\sharp+ \overline{\Phi_2^\sharp}}  + \int_{\Omega} R\cdot (A_1-A_2) \\
&\quad + i\int_{\Omega}                                                 (\rho_1- \overline{\rho_2^*})\cdot(A_1-A_2) e^{i\left( \xi^\prime, 2\tau  \sqrt{1-\tau^{-2} \frac{\left | \xi \right |^2}{4}} \frac{ \left |  \xi^\prime \right |}{ \left |  \xi \right |} \right)\cdot x} e^{\Phi_1^{\sharp}+ \overline{\Phi_2^{\sharp^*}}} \\
&\quad + i\int_{\Omega} (\rho_1^*- \overline{\rho_2})\cdot (A_1-A_2) e^{i\left( \xi^\prime, -2\tau  \sqrt{1-\tau^{-2} \frac{\left | \xi \right |^2}{4}} \frac{ \left |  \xi^\prime \right |}{ \left |  \xi \right |} \right)\cdot x} e^{\Phi_1^{\sharp^*}+ \overline{\Phi_2^{\sharp}}} \\
& =  i\int_{\mathbb{R}^n}  (\overline{\rho_{2,0}}-\rho_{1,0})\cdot \left[ \chi_{\Omega\cup\Omega^*}(\widetilde{A_1}-\widetilde{A_2})\right] e^{i\xi\cdot x} e^{\Phi_1^\sharp+ \overline{\Phi_2^\sharp}}\\
& \quad+ \int_{\mathbb{R}^n} \mathcal{O}(\tau^{-1}) \cdot \left[ \chi_{\Omega\cup\Omega^*}(\widetilde{A_1}-\widetilde{A_2})\right]  e^{i\xi\cdot x} e^{\Phi_1^\sharp+ \overline{\Phi_2^\sharp}} + \int_{\Omega} R\cdot (A_1-A_2) \\
&\quad + i\int_{\mathbb{R}^n}                                                 (\rho_1- \overline{\rho_2^*})\cdot (\chi_\Omega(A_1-A_2)) e^{i\left( \xi^\prime, 2\tau  \sqrt{1-\tau^{-2} \frac{\left | \xi \right |^2}{4}} \frac{ \left |  \xi^\prime \right |}{ \left |  \xi \right |} \right)\cdot x}  e^{\Phi_1^{\sharp}+ \overline{\Phi_2^{\sharp^*}}}\\
&\quad  + i\int_{\mathbb{R}^n} (\rho_1^*- \overline{\rho_2})\cdot (\chi_\Omega(A_1-A_2)) e^{i\left( \xi^\prime, -2\tau  \sqrt{1-\tau^{-2} \frac{\left | \xi \right |^2}{4}} \frac{ \left |  \xi^\prime \right |}{ \left |  \xi \right |} \right)\cdot x} e^{\Phi_1^{\sharp^*}+ \overline{\Phi_2^{\sharp}}}.
\end{align*}
Hence, from this identity and also adding and subtracting terms, we get 
\begin{equation}\label{lilax3}
\begin{aligned}
 & i\int_{\mathbb{R}^n}  (\overline{\rho_{2,0}}-\rho_{1,0})\cdot  \left[ \chi_{\Omega\cup\Omega^*}(\widetilde{A_1}-\widetilde{A_2})\right] e^{i\xi\cdot x} e^{\Phi_1+ \overline{\Phi_2}}\\
 & \qquad  = I + II + III + IV + V+VI + VII,
 \end{aligned}
\end{equation}
where
\begin{equation}\label{lilax4}
I=   i\int_{\mathbb{R}^n}  (\overline{\rho_{2,0}}-\rho_{1,0})\cdot \left[ \chi_{\Omega\cup\Omega^*}(\widetilde{A_1}-\widetilde{A_2})\right] e^{i\xi\cdot x} ( e^{\Phi_1+ \overline{\Phi_2}} -  e^{\Phi_1^\sharp+ \overline{\Phi_2^\sharp}} ), 
\end{equation}
\begin{equation}\label{lilax5}
\begin{aligned}
II&= \tau^{-1} \int_{\Omega} \left [ (A_1-A_2)\cdot(DU_1 \overline{U_2} + U_1 \overline{DU_2}) \right.\\
&\qquad \qquad \qquad  \left. + (A_1^2-A_2^2+q_1-q_2)U_1\overline{U_2}\right],
\end{aligned}
\end{equation}
\begin{equation}\label{lilax6}
III=-\tau^{-1} \int_\Omega  (A_1^2-A_2^2+q_1-q_2)U_1\overline{U_2}, 
\end{equation}
\begin{equation}\label{lilax7}
IV= -  \int_{\mathbb{R}^n} \mathcal{O}(\tau^{-1}) \cdot  \left[ \chi_{\Omega\cup\Omega^*}(\widetilde{A_1}-\widetilde{A_2})\right]  e^{i\xi\cdot x} e^{\Phi_1^\sharp+ \overline{\Phi_2^\sharp}},
\end{equation}
\begin{equation}\label{lilax8}
V= \int_{\Omega} R\cdot (A_1-A_2),
\end{equation}
\begin{equation}\label{lilax9}
VI=-  i\int_{\mathbb{R}^n} (\rho_1- \overline{\rho_2^*})\cdot (\chi_\Omega(A_1-A_2)) e^{i\left( \xi^\prime, 2\tau  \sqrt{1-\tau^{-2} \frac{\left | \xi \right |^2}{4}} \frac{ \left |  \xi^\prime \right |}{ \left |  \xi \right |} \right)\cdot x}  e^{\Phi_1^{\sharp}+ \overline{\Phi_2^{\sharp^*}}},
\end{equation}
\begin{equation}\label{lilax10}
VII= - i\int_{\mathbb{R}^n} (\rho_1^*- \overline{\rho_2})\cdot (\chi_\Omega(A_1-A_2)) e^{i\left( \xi^\prime, -2\tau  \sqrt{1-\tau^{-2} \frac{\left | \xi \right |^2}{4}} \frac{ \left |  \xi^\prime \right |}{ \left |  \xi \right |} \right)\cdot x} e^{\Phi_1^{\sharp^*}+ \overline{\Phi_2^{\sharp}}}.
\end{equation}
The task is now to estimate each one of the above terms. To estimate the first term $I$ we will use the following fact:
\begin{equation}\label{ineq}
\left | e^{z_1}- e^{z_2} \right |\leq \left | z_1 -z_2 \right |e^{\max\left \{ \Re z_1, \Re z_2 \right \}},
\end{equation}
for all $z_1,z_2\in \mathbb{C}$. Thus, from (\ref{jkwww}), the boundedness of $\Omega\cup\Omega^*$ and (\ref{zuno1}), we obtain
\begin{equation}\label{two2}
\begin{aligned}
 \left | I \right | 
& \leq C_1  \left \|  \Phi_1 - \Phi_1^{\sharp^*}+ \overline{\Phi_2}-  \Phi_2^{\sharp^*} \right \|_{L^2(\mathbb{R}^n)} \leq C_2\,  \tau^{-s/(s+2)}.
\end{aligned}
\end{equation}
From Corollary \ref{coAlid} and (\ref{jaci11}), we obtain
\begin{equation}\label{three3}
\begin{aligned}
& \left | I I\right |  
 \leq C_3 \tau^{-1} dist \, (C_1^\Gamma, C_2^\Gamma) \left \| U_1 \right \|_{H^1(\Omega)} \left \| U_2 \right \|_{H^1(\Omega)} \\
& \qquad  \leq C_4 \tau^{-1} e^{2\tau \kappa }  dist \, (C_1^\Gamma, C_2^\Gamma).
\end{aligned}
\end{equation}
We continue in this fashion to estimate the other terms. The identities (\ref{unos})-(\ref{doss}), (\ref{jkw}), (\ref{zlk}), the estimates (\ref{prunozss})-(\ref{prdoszss}) and triangular inequality, imply that 
\begin{equation}\label{four4}
\begin{aligned}
 & \left | II I\right |  = 
    \tau^{-1} \left |   \int_\Omega  (A_1^2-A_2^2+q_1-q_2)e^{\tau\overline{\rho_2}\cdot x}U_1(  \overline{e^{-\tau \rho_2\cdot x}U_2 } )    \right |\\
  &  \leq C_6\tau^{-1}   \left \| e^{\tau\overline{\rho_2}\cdot x}U_1 \right \|_{L^2(\Omega)}    \left \| e^{-\tau\rho_2\cdot x}U_2 \right \|_{L^2(\Omega)}\\
  &= C_6 \tau^{-1}   \left \| e^{\tau\overline{\rho_2}\cdot x}  \left[ e^{\tau \rho_1\cdot x} \left(   e^{\Phi_1^\sharp} +r_1  \right)- e^{\tau \rho_1^*\cdot x} \left(   e^{\Phi_1^{\sharp^*}} +r_1^* \right) \right]\right \|_{L^2(\Omega)} \\
  &   \times  \left \| e^{-\tau\rho_2\cdot x}  \left[ e^{\tau\rho_2\cdot x} (e^{\Phi_2^\sharp} +r_2) - e^{\tau \rho_2^*\cdot x} \left(   e^{\Phi_2^{\sharp^*}} +r_2^* \right)\right] \right \|_{L^2(\Omega)}\\
  & = C_6 \tau^{-1}   \left \| e^{i \xi\cdot x}   \left(   e^{\Phi_1^\sharp} +r_1  \right)- e^{i  \xi^*\cdot x} \left(   e^{\Phi_1^{\sharp^*}} +r_1^* \right) \right \|_{L^2(\Omega)} \\
  &   \times  \left \| e^{\Phi_2^\sharp} +r_2 - e^{\tau (\rho_2^*-\rho_2)\cdot x} \left(   e^{\Phi_2^{\sharp^*}} +r_2^* \right)\right \|_{L^2(\Omega)}\\
  & \leq C_6\tau^{-1}  \left( \left \|  e^{\Phi_1^\sharp} +r_1 \right \|_{L^2(\Omega)} +  \left \|   e^{\Phi_1^{\sharp^*}} +r_1^* \right \|_{L^2(\Omega)} \right. \\
  & \qquad  \qquad   \left. + \left \| e^{\Phi_2^\sharp} +r_2 \right \|_{L^2(\Omega)} + \left \| e^{\Phi_2^{\sharp^*}} +r_2^* \right \|_{L^2(\Omega)}  \right) \leq  C_7\, \tau^{-1},
\end{aligned}
\end{equation}
where in the last line we have used the identity
\[
\rho_2^*-\rho_2= i\left(  -\frac{1}{2}(\xi +\xi^*)-2 \sqrt{1-\tau^{-2} \frac{\left | \xi \right |^2}{4}} \frac{\left | \xi^\prime \right |}{\left | \xi \right |}e(n) \right).
\]
Again from (\ref{prunozss}) and the boundedness of $\Omega\cup\Omega^*$, it follows easily that
\begin{equation}\label{five5}
\left | IV \right | = \left |  -  \int_{\mathbb{R}^n} \mathcal{O}(\tau^{-1}) \cdot \left[ \chi_{\Omega\cup\Omega^*}(\widetilde{A_1}-\widetilde{A_2})\right]   e^{i\xi\cdot x} e^{\Phi_1^\sharp+ \overline{\Phi_2^\sharp}}  \right |   \leq C_8\,  \tau^{-1}.
\end{equation}
From (\ref{jkw}), (\ref{prs}) and (\ref{prunozss})-(\ref{prdoszss}), we get
\begin{equation}\label{six6}
\left | V \right |= \left |  \int_{\Omega} R\cdot (A_1-A_2) \right |\leq C_{10}\, \tau^{- s/(s+2)}.
\end{equation}
To estimate the term $VI$ we will use an extra argument due to Heck and Wang \cite{HW1}. From (\ref{jkww}), we have
\begin{equation}\label{cinco}
\begin{aligned}
 \left | VI \right |&
 \leq \left | \rho_1- \overline{\rho_2^*} \right |   \left |  \int_{\mathbb{R}^n}   (\chi_\Omega(A_1-A_2)) e^{i\left( \xi^\prime, 2\tau  \sqrt{1-\tau^{-2} \frac{\left | \xi \right |^2}{4}} \frac{ \left |  \xi^\prime \right |}{ \left |  \xi \right |} \right)\cdot x}  e^{\Phi_1^{\sharp}+ \overline{\Phi_2^{\sharp^*}}}   \right |\\
& \leq 2\sqrt{2} \left |  \mathcal{F} \left[ \chi_\Omega(A_1-A_2) e^{\Phi_1^{\sharp}+ \overline{\Phi_2^{\sharp^*}}} \right]  \left( \xi^\prime, 2\tau  \sqrt{1-\tau^{-2} \frac{\left | \xi \right |^2}{4}} \frac{ \left |  \xi^\prime \right |}{ \left |  \xi \right |} \right) \right |.
\end{aligned}
\end{equation}
Now in order to apply  Lemma \ref{RLe} to the function $\chi_\Omega(A_1-A_2) e^{\Phi_1^{\sharp}+ \overline{\Phi_2^{\sharp^*}}}$ we claim such function satisfies the condition (\ref{xzse1}) of Lemma \ref{RLe} with $\sigma=1$. Indeed, if we denote by $\Phi^\sharp=\Phi_1^{\sharp}+ \overline{\Phi_2^{\sharp^*}}$ then from (\ref{prunozss}) and by a standard interpolation between the spaces $C(\mathbb{R}^n)$ and $C^1(\mathbb{R}^n)$ we get 
\begin{equation}\label{onuevt}
\left \| \Phi^\sharp \right \|_{C^s(\mathbb{R}^n)}\leq C_{10} \left \| \Phi^\sharp \right \|_{C(\mathbb{R}^n)}^{1-s} \left \| \Phi^\sharp \right \|_{C^1(\mathbb{R}^n)}^s\leq C_{11}\, \tau^{s/(s+2)}.
\end{equation}
For convenience we denote $A:= \chi_\Omega(A_1-A_2)$. Thus for any $y\in \mathbb{R}^n$, Cauchy-Schwarz inequality and  (\ref{ineq}) imply that
\begin{align*}
&\left \| \left [   \chi_\Omega(A_1-A_2) e^{\Phi_1^{\sharp}+ \overline{\Phi_2^{\sharp^*}}}\right ](\cdot +y) -\left[  \chi_\Omega(A_1-A_2) e^{\Phi_1^{\sharp}+ \overline{\Phi_2^{\sharp^*}}}  \right] (\cdot)  \right \|_{L^1(\mathbb{R}^n)} \\
& = \left \| (  A e^{\Phi^{\sharp}}) (\cdot +y) - (  A e^{\Phi^{\sharp}}) (\cdot) \right \|_{L^1(\mathbb{R}^n)} \\
&= \int_{\mathbb{R}^n} \left |  \left [ A(x +y)-A(x) \right ]e^{\Phi^\sharp(x+y)}   + \left[ e^{\Phi^\sharp(x+y)}- e^{\Phi^\sharp(x)}\right] A(x)  \right |dx\\
& \leq \int_{\mathbb{R}^n} \left |  \left [ A(x +y)-A(x) \right ]e^{\Phi^\sharp(x+y)}    \right |dx + \int_{\mathbb{R}^n} \left |  \left[ e^{\Phi^\sharp(x+y)}- e^{\Phi^\sharp(x)}\right] A(x)  \right |dx\\
& \leq C_{13}\left( \left \| A(\cdot +y)-A(\cdot)  \right \|_{L^2(\mathbb{R}^n)} + \left \| \Phi^\sharp (\cdot +y)-\Phi^\sharp (\cdot)  \right \|_{L^\infty(\mathbb{R}^n)}   \right)\\
& \leq C_{14} \left(  \left \| A \right \|_{B^{2,\infty}_s} + \left \| \Phi^\sharp \right \|_{C^s(\mathbb{R}^n)}     \right)  \left |  y\right |^s. 
\end{align*}
Then, by combining the above inequality with (\ref{onuevt}), we obtain
\[
 \left \| (  A e^{\Phi^{\sharp}}) (\cdot +y) - (  A e^{\Phi^{\sharp}}) (\cdot) \right \|_{L^1(\mathbb{R}^n)}\leq C_{15}\, \tau^{s/(s+2)} \left | y \right |^s,
\]
for any $y\in \mathbb{R}^n$ with $\left | y \right |<1$. So the claim have been proved.\\

Hence, applying  Lemma \ref{RLe} to $f= \chi_\Omega(A_1-A_2) e^{\Phi_1^{\sharp}+ \overline{\Phi_2^{\sharp^*}}}$, $C_0= C_{15} \tau^{s/(s+2)}$ and $\sigma=1$, there exist $C_{15}>0$ and $\epsilon_0>0$ such that the following inequality
\[
 \left |  \mathcal{F}\left[ \chi_\Omega(A_1-A_2) e^{\Phi_1^{\sharp}+ \overline{\Phi_2^{\sharp^*}}}  \right](\eta) \right | \leq C_{15}\, \tau^{s/(s+2)} \left( e^{-\pi \epsilon^2 \left | \eta \right |^2} +\epsilon^s  \right),
\]
holds true for all $0<\epsilon<\epsilon_0$ and for all $\eta \in \mathbb{R}^n$. Hence, from (\ref{cinco}) and the above inequality, we get
\begin{equation}\label{seven7}
\left | VI \right |\leq C_{16}\,  \tau^{s/(s+2)} \left( e^{-4 \pi \epsilon^2 \tau^2 \frac{\left |\xi^\prime  \right |^2}{\left |  \xi \right |^2} }  + \epsilon^s \right).
\end{equation}
Analogously as was estimated $VI$, we obtain
\begin{equation}\label{seven77}
\left | VII \right |\leq C_{17}\,  \tau^{s/(s+2)} \left( e^{-4 \pi \epsilon^2 \tau^2 \frac{\left |\xi^\prime  \right |^2}{\left |  \xi \right |^2} }  + \epsilon^s \right).
\end{equation}
Hence, by combining (\ref{two2})-(\ref{six6}) and (\ref{seven7})-(\ref{seven77}) into (\ref{lilax3}) and taking into account that $\overline{\rho_{2,0}}-\rho_{1,0}=-2(i\mu_1+\mu_2)$; we get
\begin{equation}\label{removing}
\begin{aligned}
& \left |\int_{\mathbb{R}^n}  (i\mu_1+\mu_2)\cdot (\chi_{\Omega\cup\Omega^*}(\widetilde{A_1}-\widetilde{A_2}))e^{i\xi\cdot x} e^{\Phi_1+ \overline{\Phi_2}}  \right | \\
& = \frac{1}{2} \left |\int_{\mathbb{R}^n}  (\overline{\rho_{2,0}}-\rho_{1,0})\cdot (\chi_{\Omega\cup\Omega^*}(\widetilde{A_1}-\widetilde{A_2}))e^{i\xi\cdot x} e^{\Phi_1+ \overline{\Phi_2}}  \right | \\
& \leq C_{18}  \left[  \tau^{-s/(s+2)}  + e^{2\tau \kappa}  dist \, (C_1^\Gamma, C_2^\Gamma) +\tau^{s/(s+2)}  \left( e^{-4 \pi \epsilon^2 \tau^2 \frac{\left |\xi^\prime  \right |^2}{\left |  \xi \right |^2} }  + \epsilon^s \right) \right].
\end{aligned}
\end{equation}
Now the next task will be to remove the function $e^{\Phi_1+ \overline{\Phi_2}}$ from the left-hand side of the above inequality. Since (\ref{mmmmmz12}) and (\ref{mmmmmz123}), it follow easily that
\[
(\overline{\rho_{2,0}}-\rho_{1,0})\cdot \nabla (\Phi_1+ \overline{\Phi_2})- i (\overline{\rho_{2,0}}-\rho_{1,0})\cdot (\chi_{\Omega\cup\Omega^*}(\widetilde{A_1}-\widetilde{A_2}))=0,
\]
which imply that
\begin{equation}\label{xvtyu}
(i\mu_1+\mu_2)\cdot \nabla (\Phi_1+ \overline{\Phi_2})- i (i\mu_1+\mu_2)\cdot (\chi_{\Omega\cup\Omega^*}(\widetilde{A_1}-\widetilde{A_2}))=0.
\end{equation}
From this equation, (\ref{–n–z}), (\ref{prtresz15}) and applying Lemma \ref{KyUh} to $\Phi=\Phi_1+ \overline{\Phi_2}$ and $W=-i\chi_{\Omega\cup\Omega^*}(\widetilde{A_1}-\widetilde{A_2})$, we can remove the term $e^{\Phi_1+ \overline{\Phi_2}}$ from the left-hand side of (\ref{removing}). Thus we obtain
\begin{equation}\label{–po}
\begin{aligned}
&  \left |\int_{\mathbb{R}^n}  (i\mu_1+\mu_2)\cdot (\chi_{\Omega\cup\Omega^*}(\widetilde{A_1}-\widetilde{A_2}))e^{i\xi\cdot x}  \right | \\
& \leq  C_{19}  \left[  \tau^{-s/(s+2)}  + e^{2\tau k}  dist \, (C_1^\Gamma, C_2^\Gamma) +\tau^{s/(s+2)}  \left( e^{-4 \pi \epsilon^2 \tau^2 \frac{\left |\xi^\prime  \right |^2}{\left |  \xi \right |^2} }  + \epsilon^s \right) \right].
\end{aligned}
\end{equation}
We can apply the previous arguments with $\rho_1$ replaced by $\overline{\rho_1}$ and $\rho_2$ replaced by $\overline{\rho_2}$, to obtain
\begin{equation}\label{–po1}
\begin{aligned}
&  \left |\int_{\mathbb{R}^n}  (-i\mu_1+\mu_2)\cdot (\chi_{\Omega\cup\Omega^*}(\widetilde{A_1}-\widetilde{A_2}))e^{i\xi\cdot x}  \right | \\
& \leq  C_{20}  \left[  \tau^{-s/(s+2)}  + e^{2\tau \kappa}  dist \, (C_1^\Gamma, C_2^\Gamma) +\tau^{s/(s+2)}  \left( e^{-4 \pi \epsilon^2 \tau^2 \frac{\left |\xi^\prime  \right |^2}{\left |  \xi \right |^2} }  + \epsilon^s \right) \right].
\end{aligned}
\end{equation}
Thus, adding and subtracting (\ref{–po}) and (\ref{–po1}), we get
\begin{equation}\label{–po2}
\begin{aligned}
&  \left |\int_{\mathbb{R}^n}  \mu\cdot (\chi_{\Omega\cup\Omega^*}(\widetilde{A_1}-\widetilde{A_2}))e^{i\xi\cdot x}  \right | \\
& \leq  C_{21} \left |\mu\right | \left[  \tau^{-s/(s+2)}  + e^{2\tau \kappa}  dist \, (C_1^\Gamma, C_2^\Gamma) +\tau^{s/(s+2)}  \left( e^{-4 \pi \epsilon^2 \tau^2 \frac{\left |\xi^\prime  \right |^2}{\left |  \xi \right |^2} }  + \epsilon^s \right) \right].
\end{aligned}
\end{equation}
for all $\mu\in \spn\left \{ \mu_1, \mu_2 \right \}$ and for all $\xi\in \bigcap_{l=1}^{n-1}E_{l}$. Now Lemma \ref{muum} ensures that for every $j,k=1,2, \ldots, n$ the vector defined by $(\mu)_{j,k}:= \xi_j e_k-\xi_k e_j$ belongs to $\spn\left \{ \mu_1, \mu_2 \right \}$. Thus, replacing these vectors into (\ref{–po2}), it immediately implies the desired assertion and so the proof is completed.
\end{proof}
\subsection{Proof of Theorem \ref{SMP}}
By Proposition \ref{ggggg5}, taking into account that the constant $C>0$ in the estimate (\ref{final2}) is independent of $\xi\in \bigcap_{l=1}^{n-1}E_{l}$ and since the set $\bigcap_{l=1}^{n-1}E_{l}$ is dense in $\mathbb{R}^n$, it follows that the following estimate
\begin{equation}\label{final2zq} 
\begin{aligned}
& \left | \mathcal{F}\left [ d(\chi_{\Omega\cup\Omega^*}\widetilde{A_1})-  \right ](\xi) - \mathcal{F}\left [ d(\chi_{\Omega\cup\Omega^*}\widetilde{A_2})  \right ](\xi)   \right | \\
&\quad  \leq C  \left | \xi \right |\left[  \tau^{-s/(s+2)}  + e^{2\tau \kappa}  dist \, (C_1^\Gamma, C_2^\Gamma) +\tau^{s/(s+2)}  \left( e^{-4 \pi \epsilon^2 \tau^2 \frac{\left |\xi^\prime  \right |^2}{\left |  \xi \right |^2} }  + \epsilon^s \right) \right],
\end{aligned}
\end{equation}
holds true for all $\xi\in \mathbb{R}^n$. Now consider $R\geq 1$ (which will be fixed later) and denote by $B_R(0)$ the open ball in $\mathbb{R}^n$ centered at $0$ of radius $R$. For convenience we denote $\widetilde{A}:=\chi_{\Omega\cup\Omega^*}(\widetilde{A_1}-\widetilde{A_2})$. Then
\begin{equation}\label{w3}
\left \| d\widetilde{A} \right \|_{H^{-1}(\mathbb{R}^n)}^2= E_1 + E_2,
\end{equation}
where 
\[
E_1=  \int_{B_R(0)\setminus \left \{0  \right \}}(1+\left | \xi \right |^2   )^{-1}\left | \mathcal{F}\left[ d\widetilde{A} \right]  (\xi) \right |^2 d\xi
\]
and
\[
E_2= \int_{\mathbb{R}^n\setminus B_R(0)}(1+\left | \xi \right |^2   )^{-1}\left | \mathcal{F}\left[ d\widetilde{A} \right]  (\xi) \right |^2 d\xi.
\]
From (\ref{final2zq}) and taking $\epsilon=\tau^{-3/2(s+2)}$, we get 
\begin{equation}\label{w1}
\begin{aligned}
 E_1&=   \int_{B_R(0)\setminus \left \{0  \right \}}(1+\left | \xi \right |^2   )^{-1}\left | \mathcal{F}\left[ d\widetilde{A} \right]  (\xi) \right |^2 d\xi\\
 & \quad \leq C_1\, R^n\left(  \tau^{-2s/(s+2)}  + e^{4\tau \kappa  }  dist \, (C_1^\Gamma, C_2^\Gamma)^{2} +\tau^{2s/(s+2)} \epsilon^{2s} \right)\\
 & \qquad  + C_1\,\tau^{2s/(s+2)} \int_{B_R(0)\setminus \left \{0  \right \}} e^{-8 \pi \epsilon^2 \tau^2 \frac{\left |\xi^\prime  \right |^2}{\left |  \xi \right |^2} }\\
 &  \quad \leq C_1\, R^n\left(  \tau^{-2s/(s+2)}  + e^{4\tau \kappa  }  dist \, (C_1^\Gamma, C_2^\Gamma)^{2} +\tau^{2s/(s+2)} \epsilon^{2s} \right)\\
 & \qquad  + C_1\tau^{2s/(s+2)}  R^n\epsilon^{-2} \tau^{-2}\\
&\quad  \leq C_2\, R^n\left(\tau^{-s/(s+2)} +  e^{4\tau \kappa  }  dist \, (C_1^\Gamma, C_2^\Gamma)^{2}    \right).
 \end{aligned}
\end{equation}

To estimate the integral over $\mathbb{R}^n\setminus B_R(0)$ in (\ref{w3}) we shall use a mollification argument to $\widetilde{A}$, as was done in the proof of Theorem \ref{Exso}. By Lemma \ref{heyw}, we have $\widetilde{A}\in L^{\infty}\cap B^{2, \infty}_s(\mathbb{R}^n)$. Thus, consider $\varphi \in C^{\infty}_0(\mathbb{R}^n)$ such that $0\leq \varphi \leq 1$ and $\supp \varphi \subset \overline{B_1(0)}$, where $\overline{B_1(0)}$ denotes the closure of the  ball in $\mathbb{R}^n$ of radius 1 centered at the origin. For each $\delta>0$ we define $\varphi_\delta(x)=\delta^{-n}\varphi(x/\delta)$ and set ${\widetilde{A}}^{\sharp}_{\delta}= \widetilde{A}*\varphi_\delta$ which belongs to $C^{\infty}_0(\mathbb{R}^n; \mathbb{C}^n)$. Then there exists a positive constant $C_3>0$ (depending on $\Omega$ and $n$) such that 
\begin{equation}\label{Asharpz1}
\left \| \widetilde{A}-{\widetilde{A}}^{\sharp}_{\delta}  \right \|_{L^2(\mathbb{R}^n)} \leq C_3\, \delta^{s} \left \| \widetilde{A} \right \|_{B^{2,\infty}_s(\mathbb{R}^n)}, 
\end{equation}
and for each $\alpha \in \mathbb{N}^n$, we have
\begin{equation}\label{Asharp1z2}
\left \| \partial^{\alpha} {\widetilde{A}}^{\sharp}_{\delta} \right \|_{L^\infty(\mathbb{R}^n)} \leq C_3\, \delta^{-\left | \alpha \right |} \left \| \widetilde{A} \right \|_{L^{\infty}(\mathbb{R}^n)}.
\end{equation}
Now Plancherel's identity and (\ref{Asharpz1})-(\ref{Asharp1z2}), imply that
\begin{align*}
 E_2&= \int_{\mathbb{R}^n\setminus B_R(0)}(1+\left | \xi \right |^2   )^{-1}\left | \mathcal{F}\left[ d\widetilde{A} \right]   (\xi) \right |^2 d\xi\\
 &\leq  C_4\int_{\mathbb{R}^n\setminus B_R(0)}(1+\left | \xi \right |^2   )^{-1}\left |  \mathcal{F}\left[ d\widetilde{A}^\sharp_\delta \right]  (\xi) \right |^2 d\xi \\
 & \quad + C_4  \int_{\mathbb{R}^n\setminus B_R( 0)}(1+\left | \xi \right |^2   )^{-1}\left |   \mathcal{F}\left[ d\widetilde{A} \right]  (\xi)  -\mathcal{F}\left[ d\widetilde{A}^\sharp_\delta \right]  (\xi)  \right |^2 d\xi\\
 & \leq C_5\int_{\mathbb{R}^n\setminus B_R(0)}(1+\left | \xi \right |^2   )^{-1}\left |  \mathcal{F}\left[ d\widetilde{A}^\sharp_\delta \right]  (\xi) \right |^2 d\xi \\
 & \qquad + C_5  \int_{\mathbb{R}^n\setminus B_R(0)} \left | \xi \right |^2 (1+\left | \xi \right |^2   )^{-1}\left | \mathcal{F} \left[ A  -A^\sharp\right] (\xi)\right |^2 d\xi\\
 & \leq C_6 \left( R^{-2}  \left \| d{A}^\sharp  \right \|^2_{L^2(\mathbb{R}^n)} + \left \| \widetilde{A}- \widetilde{A}^\sharp_\delta \right \|_{L^2(\mathbb{R}^n)}^2 \right)\\
 & \leq C_7 \left(   R^{-2} \delta^{-2}+ \delta^{2s}  \right).
\end{align*}
By equating the terms $R^{-2} \delta^{-2}$ and $ \delta^{2s}$, that is taking $\delta=R^{-1/(s+1)}$, we get
\begin{equation}\label{w2}
 \int_{\mathbb{R}^n\setminus B_R(0)}(1+\left | \xi \right |^2   )^{-1}\left | \mathcal{F}\left[ d\widetilde{A} \right](\xi)  \right |^2 d\xi \leq C_8 R^{-2s/(s+1)}. 
\end{equation}
Then combining (\ref{w1}) and (\ref{w2}) into (\ref{w3}) we have that there exist two positive constants $C_9$ and $\tau_1$ such that the estimate
\begin{equation}\label{wwww3}
\begin{aligned}
& \left \| dA \right \|_{H^{-1}(\mathbb{R}^n)}^2\\
& \quad \leq C_9\left( R^ne^{4\tau \kappa}  dist \, (C_1^\Gamma, C_2^\Gamma)^{2}+ R^n\tau^{-s/(s+2)}   +  R^{-2s/(s+1)}  \right),
\end{aligned}
\end{equation}
holds true for all $\tau\geq \tau_1$. By equating the two last terms of the right-hand side of the above inequality, and then we take
\[
R=\tau^{\frac{s(s+1)}{(s+2)(n+ns+2s)}}.
\]
We also have that there exist another two positive constants $C_{10}$ and $\tau_2$ such that
\[
R^n=\tau^{\frac{ns(s+1)}{(s+2)(n+ns+2s)}}\leq C_{10} e^{\tau k},
\]
for all $\tau\geq \tau_2$. Thus, replacing this facts into (\ref{wwww3}), we have
\begin{equation}\label{ultimooo}
\begin{aligned}
&  \left \| d\widetilde{A} \right \|_{H^{-1}(\mathbb{R}^n)}^2\\
&\quad  \leq C_{11}\left( e^{5\tau \kappa}   dist \, (C_1^\Gamma, C_2^\Gamma)^{2} + \tau^{-\frac{2s^2}{(s+2)(n+ns+2s)}} \right).
\end{aligned}
\end{equation}
On the other hand, there exists $\tau_3>0$ such that 
\[
e^{-\tau \kappa} \leq  \tau^{-\frac{2s^2}{(s+2)(n+ns+2s)}},
\]
for all $\tau\geq \tau_3$. Now taking $\tau_0\geq \max \left( \tau_1, \tau_2, \tau_3 \right)$ such that $3\kappa \tau_0\geq 1$, it is easy to check that
\begin{equation}\label{casiultimo}
\tau:= \frac{1}{3k} \left | \log dist(C_1^\Gamma, C_2^\Gamma) \right |\geq \tau_0,
\end{equation}
whenever 
\[
dist(C_1^\Gamma, C_2^\Gamma)\leq e^{-3\kappa\tau_0}.
\]
Thus, from (\ref{casiultimo}) it follows that $dist(C_1^\Gamma, C_2^\Gamma)\leq e^{-3\tau k}$ and then from (\ref{ultimooo}) we get
\begin{equation}\label{dmpoik}
  \left \| d\widetilde{A} \right \|_{H^{-1}(\mathbb{R}^n)} \leq C_{10} \left | \log dist(C_1^\Gamma, C_2^\Gamma) \right |^{-\frac{s^2}{(s+2)(n+ns+2s)}}.
\end{equation}
Since $s\in \left( 0, 1/2\right)$, it is immediate to see that
\[
\frac{s^2}{(s+2)(n+ns+2s)} \geq \frac{s^2}{5n}.
\]
Thus, we conclude the proof by considering the above inequality into (\ref{dmpoik}), taking $C=\max \left\{ 3\kappa\tau_0, C_{10}\right\}$, $\lambda=1/5$ and finally taking into account that
\[
\left \| d(A_1-A_2) \right \|_{H^{-1}(\Omega)}\leq \left \| d\widetilde{A} \right \|_{H^{-1}(\mathbb{R}^n)}.  
\]
\section{Stability for the electric potential}

The goal of this section is to prove Theorem \ref{SEP}. The idea will be to combine the gauge invariance of the Cauchy data sets (see Lemma $3.1$ in \cite{KU}) and the stability result already proved for the magnetic potentials in the section $2$. Our proof involves a Hodge decomposition as in Caro and Pohjola \cite{CP}, see Lemma $6.2$ therein. Our starting point is the following lemma. This is the analogous to Lemma $5.1$ in \cite{CP}. For this reason we will only give the main ideas of the proof.
\begin{lem}\label{coAlidww}
Let $\Omega\subset \mathbb{R}^n$ be an open set. Let $B\subset \mathbb{R}^n$ be an open ball with $\overline{\Omega}\subset B$ and $\Gamma_0 \subset B\cap \left \{x\in \mathbb{R}^n: x_n=0  \right \}$. Let $A_1, A_2 \in L^\infty(\Omega; \mathbb{C}^n)$ and $q_1, q_2\in L^\infty(\Omega; \mathbb{C})$. Given $M>0$ and $s\in \left( 0,1/2 \right)$, assume that $\chi_\Omega A_1$ and $\chi_\Omega A_2$ belong to $\mathcal{A}(\Omega, M,s)$. Let $\varphi \in W^{1,n}(B)\cap L^{\infty}(B)$ with $\varphi|_{\partial B}=0$. Then for any $U_1, U_2\in H^{1}(B)$ satisfying in $B$ the equations: 
\begin{equation}\label{aver1}
\mathcal{L}_{\chi_{\Omega\cup\Omega^*} \widetilde{A_1}, \chi_{\Omega\cup\Omega^*} \widetilde{q_1}}\, U_1=0,\quad U_1|_{B\cap \left \{x\in \mathbb{R}^n: x_n=0\right\}}=0,
\end{equation}
\begin{equation}\label{aver2}
\mathcal{L}_{\overline{\chi_{\Omega\cup\Omega^*} \widetilde{A_2}}, \overline{\chi_{\Omega\cup\Omega^*} \widetilde{q_2}}}\,U_2=0,\quad U_2|_{B\cap \left \{x\in \mathbb{R}^n: x_n=0\right\}}=0,
\end{equation}
there exists a positive constant $C$ (depending on $\Omega, n,M, s, \left \| q_1 \right \|_{L^\infty}, \left \| q_2 \right \|_{L^\infty}$) such that
\begin{equation}\label{uauww}
\begin{aligned}
& \left |  \int_{B} e^{i\varphi} \left\{ ( \chi_{\Omega\cup\Omega^*} \widetilde{A_1}-(\chi_{\Omega\cup\Omega^*} \widetilde{A_2}+\nabla \varphi))\cdot \left( DU_1\overline{U_2} +  U_1 \overline{DU}_2 \right) \right. \right.\\
&\qquad + \left[    \chi_{\Omega\cup\Omega^*} \widetilde{A_1}^2-(\chi_{\Omega\cup\Omega^*} \widetilde{A_2}+\nabla \varphi)^2 +  \chi_{\Omega\cup\Omega^*} (\widetilde{q_1}-\widetilde{q_2})   \right. \\
& \qquad \qquad   \left. \left. \left. -(\chi_{\Omega\cup\Omega^*}( \widetilde{A_1}-\widetilde{A_2}) -\nabla \varphi)\cdot \nabla \varphi \right] U_1\overline{U}_2 \right\}  \right |\\
&  \qquad  \leq C\, dist \, (C_1^\Gamma, C_2^\Gamma) \left \| U_1 \right \|_{H^1(\Omega\cup\Omega^*)} \left \| U_2 \right \|_{H^1(\Omega\cup\Omega^*)},
\end{aligned}
\end{equation}
where $C_j^\Gamma$ denotes the local Cauchy data set $C_{A_j, q_j}^\Gamma$ for $j=1,2$.
\end{lem}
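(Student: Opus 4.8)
The plan is to reduce the estimate \eqref{uauww} to the integral identity of Lemma \ref{Alid} (equivalently, to Corollary \ref{coAlid}) after applying the gauge transformation associated to $\varphi$. Recall that if $\varphi \in W^{1,n}(B)\cap L^\infty(B)$ with $\varphi|_{\partial B}=0$, then the gauge invariance of the Cauchy data sets (Lemma 3.1 in \cite{KU}) gives $C^\Gamma_{\chi_{\Omega\cup\Omega^*}\widetilde{A_2},\,\chi_{\Omega\cup\Omega^*}\widetilde{q_2}} = C^\Gamma_{\chi_{\Omega\cup\Omega^*}\widetilde{A_2}+\nabla\varphi,\,\chi_{\Omega\cup\Omega^*}\widetilde{q_2}}$ (with the same local data on $\Gamma$, since $\varphi$ vanishes on the relevant boundary piece). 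Concretely, if $U_2$ solves $\mathcal{L}_{\overline{\chi_{\Omega\cup\Omega^*}\widetilde{A_2}},\,\overline{\chi_{\Omega\cup\Omega^*}\widetilde{q_2}}}U_2=0$ in $B$, then $\widetilde{U}_2 := e^{-i\overline{\varphi}}U_2 = e^{-i\varphi}U_2$ (here $\varphi$ is real-valued, so $\overline{\varphi}=\varphi$) solves $\mathcal{L}_{\overline{\chi_{\Omega\cup\Omega^*}\widetilde{A_2}}+\nabla\varphi,\,\overline{\chi_{\Omega\cup\Omega^*}\widetilde{q_2}}}\widetilde{U}_2=0$, and moreover $\widetilde{U}_2|_{B\cap\{x_n=0\}}=0$ since $U_2$ does. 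The boundary map $T_r$ applied to $U_2$ and to $\widetilde{U}_2$ lands on the same Cauchy data set by gauge invariance, so the pseudo-distance controlling the two problems is unchanged.

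First I would substitute the pair $(U_1, \widetilde{U}_2)$ into Corollary \ref{coAlid} — that corollary applies because $U_1$ solves the equation with potentials $(\chi_{\Omega\cup\Omega^*}\widetilde{A_1}, \chi_{\Omega\cup\Omega^*}\widetilde{q_1})$ and vanishes on $\{x_n=0\}$, while $\widetilde{U}_2$ solves the conjugate equation with the gauged potentials $(\chi_{\Omega\cup\Omega^*}\widetilde{A_2}+\nabla\varphi, \chi_{\Omega\cup\Omega^*}\widetilde{q_2})$ and also vanishes there. The corollary (applied on the ball $B$ in place of $\Omega$, which is legitimate since all potentials are supported inside $\overline{\Omega}\subset B$ and the reflected potentials lie in the admissible class by Lemma \ref{heyw}) then yields
\[
\left| \int_B (\widetilde{A_1}'-\widetilde{A_2}')\cdot(DU_1\overline{\widetilde U_2}+U_1\overline{D\widetilde U_2}) + \big((\widetilde{A_1}')^2-(\widetilde{A_2}')^2+\widetilde{q_1}'-\widetilde{q_2}'\big)U_1\overline{\widetilde U_2}\right| \le C\,\mathrm{dist}(C_1^\Gamma,C_2^\Gamma)\,\|U_1\|_{H^1}\|\widetilde U_2\|_{H^1},
\]
where I abbreviate $\widetilde{A_1}' = \chi_{\Omega\cup\Omega^*}\widetilde{A_1}$, $\widetilde{A_2}' = \chi_{\Omega\cup\Omega^*}\widetilde{A_2}+\nabla\varphi$, $\widetilde{q_j}' = \chi_{\Omega\cup\Omega^*}\widetilde{q_j}$. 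The next step is purely algebraic: expand $\overline{\widetilde U_2} = e^{i\varphi}\overline{U_2}$ and $\overline{D\widetilde U_2} = D(e^{i\varphi}\overline{U_2})$, so that $\overline{D\widetilde{U}_2} = e^{i\varphi}(\overline{DU_2} - U_2\,(\ldots))$; carefully collecting the terms produced by $D$ hitting $e^{i\varphi}$ (which contribute $-(\nabla\varphi)$-type corrections) reorganizes the integrand into exactly the expression displayed in \eqref{uauww}, with the overall factor $e^{i\varphi}$ pulled out front. Since $\varphi\in L^\infty(B)$, the factor $e^{i\varphi}$ is bounded, and $\|\widetilde U_2\|_{H^1(\Omega\cup\Omega^*)} \le C\|U_2\|_{H^1(\Omega\cup\Omega^*)}$ because $\varphi\in W^{1,n}\cap L^\infty$ (the product estimate $\|e^{i\varphi}v\|_{H^1}\lesssim (1+\|\nabla\varphi\|_{L^n})\|v\|_{H^1}$ for $n\ge 3$ via Sobolev embedding). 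This absorbs the gauge factor into the constant $C$.

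The step I expect to be the main obstacle is the bookkeeping in the algebraic rearrangement: one must verify that the terms generated by differentiating $e^{i\varphi}$ — namely the contributions proportional to $\nabla\varphi$ from $DU_1\overline{D\widetilde U_2}$ and from $U_1\overline{D\widetilde U_2}$, together with the squared term $(\widetilde{A_2}+\nabla\varphi)^2$ — combine precisely into the mixed term $-(\chi_{\Omega\cup\Omega^*}(\widetilde{A_1}-\widetilde{A_2})-\nabla\varphi)\cdot\nabla\varphi$ appearing in \eqref{uauww}, with no leftover pieces. This is exactly the computation carried out in Lemma 5.1 of \cite{CP} for the full-data case, and the local-data version differs only in that the trace estimates are replaced by the vanishing conditions $U_1|_{\{x_n=0\}}=U_2|_{\{x_n=0\}}=0$ together with the definition \eqref{distanciaa} of the pseudo-distance; for this reason I would only sketch the routine expansion and refer the reader to \cite{CP} for the detailed verification, exactly as the preceding remark in the paper does for Corollary \ref{coAlid}.
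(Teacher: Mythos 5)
Your proposal starts from a natural idea, but there is a genuine gap in the gauge-invariance step that the paper's argument is specifically designed to avoid. You assert that the gauge invariance of the local Cauchy data sets gives $C^\Gamma_{\chi_{\Omega\cup\Omega^*}\widetilde{A_2},\,\chi_{\Omega\cup\Omega^*}\widetilde{q_2}} = C^\Gamma_{\chi_{\Omega\cup\Omega^*}\widetilde{A_2}+\nabla\varphi,\,\chi_{\Omega\cup\Omega^*}\widetilde{q_2}}$ "since $\varphi$ vanishes on the relevant boundary piece." But $\varphi$ only vanishes on $\partial B$, not on $\partial\Omega$ (nor on $\partial(\Omega\cup\Omega^*)$). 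The gauge invariance of $C^\Gamma_{A,q}$ (Lemma $3.1$ of \cite{KU}) requires $\varphi$ to vanish on the boundary of the domain on which the Cauchy data set is defined; here that is $\partial\Omega$, and in general $\varphi|_{\partial\Omega}\neq 0$. Consequently one cannot substitute the gauged pair $(\chi_{\Omega\cup\Omega^*}\widetilde{A_2}+\nabla\varphi, \chi_{\Omega\cup\Omega^*}\widetilde{q_2})$ into Corollary \ref{coAlid} on $\Omega$ and still obtain $\mathrm{dist}(C^\Gamma_{A_1,q_1}, C^\Gamma_{A_2,q_2})$ on the right-hand side; you would instead be left with the distance to the Cauchy data set of the gauged potential, which is not the quantity the lemma requires. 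Applying Corollary \ref{coAlid} with $B$ "in place of $\Omega$" does not repair this: $B$ is not contained in the upper half-space, the corollary's hypotheses are tied to the flatness condition \eqref{shapee} on $\Omega$, and in any case the resulting distance would be between Cauchy data sets for $B$, not for $\Omega$. Note also that the gauged magnetic potential $\chi_{\Omega\cup\Omega^*}\widetilde{A_2}+\nabla\varphi$ is supported on all of $B$, not in $\overline{\Omega\cup\Omega^*}$, which further undermines the claim that the admissibility hypotheses of Corollary \ref{coAlid} are met.

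The paper's proof avoids this by reversing the order of operations. It first applies Corollary \ref{coAlid} twice, to the restrictions of $U_1,U_2$ to $\Omega$ and to $\Omega^*$, with the \emph{un-gauged} potentials (which are genuinely in the admissible class on $\Omega$, resp.\ $\Omega^*$, by Lemma \ref{heyw}); adding the two estimates and using $\mathrm{dist}^* = \mathrm{dist}$ yields a bound on the un-gauged integral over $\Omega\cup\Omega^*$ in terms of $\mathrm{dist}(C_1^\Gamma,C_2^\Gamma)\,\|U_1\|_{H^1(\Omega\cup\Omega^*)}\|U_2\|_{H^1(\Omega\cup\Omega^*)}$. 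Only \emph{afterwards} is the gauge transformation introduced, via Lemma \ref{Alid} on the ball $B$: this is a pure integral identity equating the $N$-pairing, and the gauge invariance of the bilinear form $N$ on $B$ does hold because $\varphi|_{\partial B}=0$. That identity reorganizes the un-gauged integrand into the gauged one in \eqref{uauww} without touching the distance estimate at all. So the key point you are missing is that the paper uses gauge invariance only at the level of the \emph{algebraic identity on $B$} (where $\varphi$ vanishes on the boundary), never at the level of the $\Omega$-local Cauchy data sets (where it does not).
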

\begin{proof}
Our proof starts by restricting (\ref{aver1})-(\ref{aver2}) to $\Omega$ and $\Omega^*$ to obtain
\begin{equation}
\begin{aligned}
\mathcal{L}_{\widetilde{A_1}, \widetilde{q_1}}\, (U_1|_{\Omega})=0\quad \mbox{in}\, \Omega,\quad (U_1|_\Omega)|_{\Gamma_0}= 0,\\
\mathcal{L}_{\overline{\widetilde{A_2}}, \overline{\widetilde{q_2}}}\, (U_2|_{\Omega})=0\quad \mbox{in}\, \Omega,\quad (U_2|_\Omega)|_{\Gamma_0}= 0.
\end{aligned}
\end{equation}
and
\begin{equation}
\begin{aligned}
\mathcal{L}_{\widetilde{A_1}, \widetilde{q_1}}\, (U_1|_{\Omega^*})=0\quad \mbox{in}\, \Omega^*,\quad (U_1|_{\Omega^*})|_{\Gamma_0}= 0,\\
\mathcal{L}_{\overline{\widetilde{A_2}}, \overline{\widetilde{q_2}}}\, (U_2|_{\Omega^*})=0\quad \mbox{in}\, \Omega^*,\quad (U_2|_{\Omega^*})|_{\Gamma_0}= 0.
\end{aligned}
\end{equation}
Hence, by Corollary \ref{coAlid} applied to $\Omega$ and for the magnetic potentials $\widetilde{A_1},\widetilde{A_2}$, the electric potentials $\widetilde{q_1}, \widetilde{q_2}$ and $U_1|_\Omega, U_2|_\Omega\in H^1(\Omega)$, we get
\begin{equation}\label{uauzs1}
\begin{aligned}
& \left |  \int_{\Omega}  (\widetilde{A_1}-\widetilde{A_2})\cdot \left( DU_1\overline{U_2} +  U_1 \overline{DU_2} \right) + \left( \widetilde{A_1}^2-\widetilde{A_2}^2+\widetilde{q_1}-\widetilde{q_2} \right) U_1\overline{U_2}  \right |\\
&  \qquad  \leq C_1\,  dist \, (C_1^\Gamma, C_2^\Gamma) \left \| U_1 \right \|_{H^1(\Omega)} \left \| U_2 \right \|_{H^1(\Omega)}\\
& \qquad  \leq C_1\,  dist \, (C_1^\Gamma, C_2^\Gamma) \left \| U_1 \right \|_{H^1(\Omega\cup\Omega^*)} \left \| U_2 \right \|_{H^1(\Omega\cup\Omega^*)}.
\end{aligned}
\end{equation}
Applying again Corollary \ref{coAlid} to $\Omega^*$ for the magnetic potentials $\widetilde{A_1}, \widetilde{A_2}$, the electric potentials $\widetilde{q_1}, \widetilde{q_2}$ and $U_1|_{\Omega^*}, U_2|_{\Omega^*}\in H^1(\Omega^*)$, we get
\begin{equation}\label{uauzs2}
\begin{aligned}
& \left |  \int_{\Omega^*}  (\widetilde{A_1}-\widetilde{A_2})\cdot \left( DU_1\overline{U_2} +  U_1 \overline{DU_2} \right) + \left( \widetilde{A_1}^2-\widetilde{A_2}^2+\widetilde{q_1}-\widetilde{q_2} \right) U_1\overline{U_2}  \right |\\
&  \qquad  \leq C_2\,  dist^* \, (C_1^\Gamma, C_2^\Gamma) \left \| U_1 \right \|_{H^1(\Omega^*)} \left \| U_2 \right \|_{H^1(\Omega^*)}\\
&  \qquad  \leq C_2\,  dist^* \, (C_1^\Gamma, C_2^\Gamma) \left \| U_1 \right \|_{H^1(\Omega\cup\Omega^*)} \left \| U_2 \right \|_{H^1(\Omega\cup\Omega^*)},
\end{aligned}
\end{equation}
where $dist^* \, (C_1^\Gamma, C_2^\Gamma) $ is defined by $dist \, (C_1^\Gamma, C_2^\Gamma)$ (see (\ref{distanciaa})) with $\Omega$ replaced by $\Omega^*$. Then, adding (\ref{uauzs1}) and (\ref{uauzs2}), we obtain
\begin{equation}\label{omg098}
\begin{aligned}
& \left |  \int_{B}  \left[\chi_{\Omega\cup\Omega^*}(\widetilde{A_1}-\widetilde{A_2})\right]\cdot \left( DU_1\overline{U_2} +  U_1 \overline{DU_2} \right)\right.\\
&\qquad \qquad  \left. +\chi_{\Omega\cup\Omega^*} \left[ \widetilde{A_1}^2-\widetilde{A_2}^2+\widetilde{q_1}-\widetilde{q_2} \right] U_1\overline{U_2}  \right |\\
&  \qquad  \leq C_3\,  (dist \, (C_1^\Gamma, C_2^\Gamma) + dist^* \, (C_1^\Gamma, C_2^\Gamma)) \left \| U_1 \right \|_{H^1(\Omega\cup\Omega^*)} \left \| U_2 \right \|_{H^1(\Omega\cup\Omega^*)}\\
&  \qquad  = 2C_3\, dist \, (C_1^\Gamma, C_2^\Gamma)  \left \| U_1 \right \|_{H^1(\Omega\cup\Omega^*)} \left \| U_2 \right \|_{H^1(\Omega\cup\Omega^*)},
\end{aligned}
\end{equation}
where we have used that $ dist^* \, (C_1^\Gamma, C_2^\Gamma)= dist \, (C_1^\Gamma, C_2^\Gamma)$.
On the other hand, by the gauge invariance of the Cauchy data sets (see Lemma $3.1$ in \cite{KU}), we get
\begin{equation}\label{––pl}
\begin{aligned}
& \left |  \int_{B}  \left[\chi_{\Omega\cup\Omega^*}(\widetilde{A_1}-\widetilde{A_2})\right]\cdot \left( DU_1\overline{U_2} +  U_1 \overline{DU_2} \right)\right.\\
&\qquad \qquad  \left. +\chi_{\Omega\cup\Omega^*} \left[ \widetilde{A_1}^2-\widetilde{A_2}^2+\widetilde{q_1}-\widetilde{q_2} \right] U_1\overline{U_2}  \right |\\
& = \left \langle N_{\chi_{\Omega\cup\Omega^*} \widetilde{A_1}, \chi_{\Omega\cup\Omega^*} \widetilde{q_1}}U_1, T_r U_2   \right \rangle_B - \overline{\left \langle N_{\overline{\chi_{\Omega\cup\Omega^*} \widetilde{A_2}}, \overline{\chi_{\Omega\cup\Omega^*} \widetilde{q_2}}} U_2 , T_r U_1 \right \rangle}_B\\
&=  \left \langle N_{\chi_{\Omega\cup\Omega} \widetilde{A_1}, \chi_{\Omega\cup\Omega^*} \widetilde{q_1}}U_1, T_r U_2   \right \rangle_B \\
& \qquad- \overline{\left \langle N_{\overline{\chi_{\Omega\cup\Omega^*} \widetilde{A_2}+\nabla \varphi}, \overline{\chi_{\Omega\cup\Omega^*} \widetilde{q_2}}} (e^{-i\overline{\varphi}}U_2) , T_r (e^{-i\varphi }U_1) \right \rangle}_B\\
&=  \int_{B} e^{i\varphi} \left\{ ( \chi_{\Omega\cup\Omega^*} \widetilde{A_1}-(\chi_{\Omega\cup\Omega^*} \widetilde{A_2}+\nabla \varphi))\cdot \left( DU_1\overline{U_2} +  U_1 \overline{DU}_2 \right) \right. \\
&\qquad + \left[    \chi_{\Omega\cup\Omega^*} \widetilde{A_1}^2-(\chi_{\Omega\cup\Omega^*} \widetilde{A_2}+\nabla \varphi)^2 +  \chi_{\Omega\cup\Omega^*} (\widetilde{q_1}-\widetilde{q_2})   \right. \\
& \qquad \qquad    \left. \left. -(\chi_{\Omega\cup\Omega^*}( \widetilde{A_1}-\widetilde{A_2}) -\nabla \varphi)\cdot \nabla \varphi \right] U_1\overline{U}_2 \right\},
\end{aligned}
\end{equation}
where we have used that if $U_1\in H^1(B)$ satisfies $\mathcal{L}_{\chi_{\Omega\cup\Omega^*} \widetilde{A_1}, \chi_{\Omega\cup\Omega^*} \widetilde{q_1}}U_1=0$ in $B$ then $e^{-i\varphi}U_1\in H^1(B)$ satisfies $\mathcal{L}_{\chi_{\Omega\cup\Omega^*} \widetilde{A_1}+\nabla \varphi,\, \chi_{\Omega\cup\Omega^*} \widetilde{q_1}}(e^{-i\varphi}U_1)=0$ in $B$. Analogously, if $U_2\in H^1(B)$ satisfies $\mathcal{L}_{\overline{\chi_\Omega A_2}, \overline{\chi_\Omega q_2}}U_2=0$ in $B$ then $e^{-i\overline{\varphi}}U_2\in H^1(B)$ satisfies $\mathcal{L}_{\overline{\chi_{\Omega\cup\Omega} \widetilde{A_2}+\nabla \varphi},\,  \overline{\chi_{\Omega\cup\Omega} \widetilde{q_2}}}(e^{-i\overline{\varphi}}U_2)=0$ in $B$. Thus, from Lemma \ref{Alid} with $\Omega$  replaced by $B$ and replacing (\ref{––pl}) into (\ref{omg098}), we conclude the proof.
\end{proof}

In the section \ref{sde} we have used Corollary \ref{coAlid} to isolate $A_1-A_2$,  and then using solutions for the Schr\"odinger operator given by Proposition \ref{construcsolutions} we obtained the estimate (\ref{final2}) of Proposition \ref{ggggg5}. Now we follow the same ideas. Our task will be to isolate $\chi_{\Omega\cup\Omega^*}(\widetilde{q_1}-\widetilde{q_2})$ from the left-hand side of (\ref{uauww}) and taking into account two facts. The first fact will be to obtain an estimate for the function  $\chi_{\Omega\cup\Omega^*}(\widetilde{A_1}-\widetilde{A_2})-\nabla \varphi$. This can be done by using the following Hodge decomposition derived in \cite{CP}.
\begin{lem}\label{hdecom}
Let $B\subset \mathbb{R}^n$ be an open ball satisfying $\overline{\Omega}\subset B$. Let $A_1$ and $A_2$ belong to $L^\infty(\Omega; \mathbb{C}^n)$ then there exist $\psi \in W^{1,p}(B)$ with $p\geq 2$ and $C>0$ satisfying the following conditions:
\begin{equation}\label{mreyw}
\left \|  \psi \right \|_{W^{1,p}(B)} \leq C \left \| \chi_{\Omega\cup\Omega^*} (\widetilde{A_1}-\widetilde{A_2} ) \right \|_{L^p(\mathbb{R}^n)}
\end{equation}
and
\begin{equation}\label{mreyww}
\left \| \chi_{\Omega\cup\Omega^*}(\widetilde{A_1}-\widetilde{A_2})  - \nabla \psi \right \|_{L^2(B)} \leq C \left \| d (\chi_{\Omega\cup\Omega^*}(\widetilde{A_1}-\widetilde{A_2})) \right \|_{H^{-1}(B)}.
\end{equation}
Moreover, if $B^\prime$ is another open ball with $\overline{\Omega} \subset B^\prime \subset \subset B$ then 
\begin{equation}\label{rywww}
\left \|  \psi - \underline{\psi} \right \|_{H^1(B\setminus \overline{B^\prime})} \leq C\left \| d (\chi_{\Omega\cup\Omega^*}(\widetilde{A_1}-\widetilde{A_2})) \right \|_{H^{-1}(B)},
\end{equation}
where $\underline{\psi}$ denotes the average of $\psi$ in $B\setminus \overline{B^\prime}$.
\end{lem}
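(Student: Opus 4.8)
The plan is to obtain $\psi$ as the scalar potential in a Helmholtz/Hodge decomposition of the field $F:=\chi_{\Omega\cup\Omega^*}(\widetilde{A_1}-\widetilde{A_2})$ on the ball $B$. Since $A_1,A_2\in L^\infty(\Omega;\mathbb{C}^n)$, the field $F$ is bounded, and crucially $\supp F\subset\overline{\Omega\cup\Omega^*}\subset B$, so $F$ vanishes in a neighbourhood of $\partial B$ and belongs to $L^p(B;\mathbb{C}^n)$ for every $p\in[2,\infty)$. I would take $\nabla\psi$ to be the gradient part of $F$, so that the remainder $R:=F-\nabla\psi$ is divergence free and, once an appropriate boundary condition is forced on it, is controlled by its exterior derivative $dR=dF$.

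Concretely, I would fix $p\in[2,\infty)$ and let $\psi\in W^{1,p}(B)$, normalised by $\int_B\psi=0$, solve the Neumann problem $\Delta\psi=\nabla\cdot F$ in $B$, $\partial_\nu\psi=0$ on $\partial B$. This is solvable because the compatibility condition $\langle\nabla\cdot F,1\rangle=0$ holds automatically ($F$ vanishes near $\partial B$), and the $W^{1,p}$ regularity of the Neumann Laplacian on the smooth domain $B$, combined with $\|\nabla\cdot F\|_{W^{-1,p}(B)}\le C\|F\|_{L^p(\mathbb{R}^n)}$ and the Poincar\'e--Wirtinger inequality on $B$, would give $\|\psi\|_{W^{1,p}(B)}\le C\|F\|_{L^p(\mathbb{R}^n)}$, i.e.\ (\ref{mreyw}). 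Then I would set $R:=F-\nabla\psi\in L^2(B;\mathbb{C}^n)$; the weak formulation of this Neumann problem says precisely that $\int_B R\cdot\nabla\phi\,dx=0$ for all $\phi\in H^1(B)$, which means $\nabla\cdot R=0$ in $B$ together with vanishing normal trace $R\cdot\nu=0$ on $\partial B$, while $dR=dF-d(\nabla\psi)=dF\in H^{-1}(B;\Lambda^2)$ since $d(\nabla\psi)=0$ distributionally (mixed partials commute as distributions).

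For (\ref{mreyww}) I would invoke the Hodge theory of the ball: being bounded, smooth, simply connected with connected boundary, $B$ carries no nontrivial harmonic field among the $L^2$ vector fields that are divergence free with vanishing normal trace, so the Hodge decomposition and the attendant Gaffney--Friedrichs type inequality on $B$ (as in \cite{CP} and the references therein) bound such a field by its exterior derivative in the $H^{-1}$ scale, yielding $\|R\|_{L^2(B)}\le C\|dR\|_{H^{-1}(B)}=C\|dF\|_{H^{-1}(B)}$. Finally, for (\ref{rywww}) I would take $B'$ symmetric with respect to $\{x_n=0\}$ (for instance centred on $\Gamma_0$), so that $\overline{\Omega}\subset B'$ forces $\overline{\Omega^*}\subset B'$ and hence $\supp F\subset B'$; on $B\setminus\overline{B'}$ one then has $F\equiv0$, so $\nabla\psi=-R$ there, whence $\|\nabla\psi\|_{L^2(B\setminus\overline{B'})}\le\|R\|_{L^2(B)}\le C\|dF\|_{H^{-1}(B)}$ by (\ref{mreyww}), and the Poincar\'e--Wirtinger inequality applied to $\psi-\underline{\psi}$ on the connected domain $B\setminus\overline{B'}$ closes the argument.

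The hard part is (\ref{mreyww}): one must set up the auxiliary problem for $\psi$ so that $R$ inherits a boundary condition annihilating the harmonic fields on $B$ — this is exactly why the Neumann problem, rather than the Dirichlet one, is the right choice, the vanishing of $F$ near $\partial B$ being essential both for the Neumann compatibility condition and for the vanishing of the normal trace of $R$ — and then to deploy the correct $H^{-1}$-scale Hodge estimate on $B$. The remaining ingredients (the $W^{1,p}$ elliptic regularity and the two Poincar\'e--Wirtinger inequalities) are routine.
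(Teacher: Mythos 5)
The paper does not prove this lemma; it simply imports it from Caro--Pohjola \cite{CP} (Lemma~6.2 there), so there is no in-paper proof to compare against. Your reconstruction is, as far as I can tell, the right one and is consistent with how the lemma is actually invoked later: you decompose $F=\chi_{\Omega\cup\Omega^*}(\widetilde{A_1}-\widetilde{A_2})$ on $B$ into a gradient part $\nabla\psi$ (via the Neumann problem, giving the $W^{1,p}$ bound through elliptic regularity and Poincar\'e--Wirtinger) and a divergence-free remainder $R$ with vanishing normal trace, then control $R$ by $dR=dF$ in the $H^{-1}$ scale using the triviality of relative harmonic $1$-fields on the ball, and finally exploit the vanishing of $F$ near $\partial B$ together with another Poincar\'e--Wirtinger inequality for the annular estimate (\ref{rywww}). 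Two remarks are worth making. First, you correctly spotted that (\ref{rywww}) as stated only assumes $\overline\Omega\subset B'$, whereas the argument really needs $\supp F\subset B'$, i.e.\ $\overline{\Omega\cup\Omega^*}\subset B'$; the paper in fact uses exactly this stronger hypothesis in Proposition~\ref{electyc}, so the lemma's hypothesis is mildly imprecise and your symmetric-$B'$ fix is the right reading. Second, the crux of the lemma is the estimate $\|R\|_{L^2(B)}\le C\|dR\|_{H^{-1}(B)}$ for $L^2$ $1$-forms with $\delta R=0$ and $\iota_\nu R=0$, and you invoke it as a ``Gaffney--Friedrichs type inequality'' without further detail; note that this is one derivative below the usual Gaffney--Friedrichs inequality (which controls $H^1$ by $L^2$ data), so it needs its own justification -- e.g.\ writing $R=\delta\omega$ for a suitably gauged $2$-form potential $\omega$ and using $\|\delta\omega\|_{L^2}^2=\langle d\delta\omega,\omega\rangle\le\|dR\|_{H^{-1}}\|\omega\|_{H^1}$ together with a Poincar\'e inequality for $\omega$, or the equivalent whole-space Helmholtz projection computation. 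This step is exactly what \cite{CP} supply, so your outline matches the expected source; just be aware that it is the nontrivial part and should not be waved away as classical.
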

The second fact will be to use the solutions $U_j\in H^1(B)$ (j=1,2) (with the requirements of Lemma \ref{coAlidww}) constructed implicitly in the proof of Proposition \ref{construcsolutions}.

\subsection{A Fourier estimate for the electric potentials} This section will be devoted to prove the following Proposition. This is the analogous to Proposition \ref{ggggg5}.
\begin{prop}\label{electyc}
Let $\Omega\subset \mathbb{R}^n$ be a bounded open set. Let $B^\prime \subset \mathbb{R}^n$ and $B\subset \mathbb{R}^n$ be an open balls with $\overline{\Omega\cup \Omega^*}\subset B^\prime \subset \subset B$. Let $A_1, A_2\in L^{\infty}(\Omega, \mathbb{C}^n)$ and $q_1, q_2\in L^{\infty}(\Omega, \mathbb{C})$. Given $M>0$ and $s\in \left(0,1/2\right)$, assume that $\chi_\Omega A_1, \chi_\Omega A_2\in \mathcal{A}(\Omega, M, s)$ and $\chi_\Omega q_1, \chi_\Omega q_2\in \mathcal{Q}(\Omega, M, s)$. Then there exist three positive constants $C$, $\tau_0$ and $\epsilon_0$ (all depending on $\Omega, n,M, s$) such that the following estimate
\begin{equation}\label{final21}
\begin{aligned}
& \left |  \mathcal{F}\left[ \chi_{\Omega\cup\Omega^*}\widetilde{q_1}  \right] (\xi) -  \mathcal{F}\left[ \chi_{\Omega\cup\Omega^*}\widetilde{q_2}  \right] (\xi)     \right | \\
&  \leq C\left( e^{2\tau \kappa}  dist \, (C_1^\Gamma, C_2^\Gamma)  +\left |   \log  dist \, (C_1^\Gamma, C_2^\Gamma)   \right |^{-\frac{\theta s^2}{(s+2)(n+ns+2s)}}  \tau^{(s+4)/(s+2)}   \right.\\
&\qquad \qquad \qquad  \left. +  \tau^{-s/(s+2)} + e^{-4 \pi \epsilon^2 \tau^2 \frac{\left |\xi^\prime  \right |^2}{\left |  \xi \right |^2} }  + \epsilon^s\right),
\end{aligned}
\end{equation}
holds true for every $\xi\in \bigcap_{l=1}^{n-1}E_l$ (see (\ref{Er})), for all $\tau\geq \tau_0$ and for all $0<\epsilon<\epsilon_0$. Here $\theta\in \left( 0, 2/n\right)$.
\end{prop}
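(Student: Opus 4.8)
The plan is to reproduce the scheme of the proof of Proposition~\ref{ggggg5}, but starting from the integral identity of Lemma~\ref{coAlidww} --- in which the magnetic potential has been partially absorbed into a gauge $\nabla\varphi$ --- instead of Corollary~\ref{coAlid}, and then combining the Hodge decomposition of Lemma~\ref{hdecom} with the magnetic stability estimate already proved in Theorem~\ref{SMP} to dispose of the residual magnetic terms. Fix $\xi\in\bigcap_{l=1}^{n-1}E_l$, write $\widetilde{A}:=\chi_{\Omega\cup\Omega^*}(\widetilde{A_1}-\widetilde{A_2})$ and $\mathcal{E}:=\|d\widetilde{A}\|_{H^{-1}(B)}$, and let $\psi$ be the Hodge potential of Lemma~\ref{hdecom}, choosing the exponent $p$ in (\ref{mreyw}) as large as we please (legitimate since $\widetilde{A}\in L^\infty$). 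Put $\varphi:=\chi_0(\psi-\underline{\psi})$, where $\chi_0\in C^\infty_0(B)$ equals $1$ on $\overline{B'}$ and $\underline{\psi}$ is the average of $\psi$ over $B\setminus\overline{B'}$; then $\varphi\in W^{1,n}(B)\cap L^\infty(B)$, $\varphi|_{\partial B}=0$, $\nabla\varphi=\nabla\psi$ on $\Omega\cup\Omega^*$, and --- by (\ref{mreyww}) and (\ref{rywww}) --- both $\chi_{\Omega\cup\Omega^*}\widetilde{A_1}-(\chi_{\Omega\cup\Omega^*}\widetilde{A_2}+\nabla\varphi)$ and $\widetilde{A}-\nabla\varphi$ have $L^2(B)$-norm at most $C\mathcal{E}$, while (\ref{mreyw}) and $A_i\in L^\infty$ bound their $L^p(B)$-norms by a constant depending only on $M$; interpolating, their $L^n(B)$-norms are $\le C\mathcal{E}^\theta$ for any prescribed $\theta\in(0,2/n)$. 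Let $U_1,U_2\in H^1(B)$ be the reflected CGO solutions built in the proof of Proposition~\ref{construcsolutions} for $\chi_{\Omega\cup\Omega^*}\widetilde{A_1},\chi_{\Omega\cup\Omega^*}\widetilde{q_1}$ and for $\overline{\chi_{\Omega\cup\Omega^*}\widetilde{A_2}},\overline{\chi_{\Omega\cup\Omega^*}\widetilde{q_2}}$, with phases $\rho_1,\rho_2$ as in (\ref{jkw}) and $\mu_1,\mu_2$ as in Lemma~\ref{muum}; they solve the equations in $B$, vanish on $B\cap\{x_n=0\}$, and so are admissible in Lemma~\ref{coAlidww}. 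Feeding them and $\varphi$ into (\ref{uauww}), the right-hand side is $\le C\,e^{2\tau\kappa}\,dist(C_1^\Gamma,C_2^\Gamma)$ by (\ref{jaci1}).

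Next I expand $U_1\overline{U_2}$ and $DU_1\overline{U_2}+U_1\overline{DU_2}$ as in the proof of Proposition~\ref{ggggg5}. Each splits into two ``diagonal'' pieces carrying $e^{i\xi\cdot x}$ and $e^{i\xi^*\cdot x}$ (using $\tau(\rho_1+\overline{\rho_2})=i\xi$ and $\tau(\rho_1^*+\overline{\rho_2^*})=i\xi^*$ from (\ref{prs})) and two ``off-diagonal'' pieces with the non-stationary phases $e^{i(\xi',\pm2\sqrt{\tau^2-|\xi|^2/4}\,|\xi'|/|\xi|)\cdot x}$. To the off-diagonal $q$-terms I apply the quantitative Riemann--Lebesgue Lemma~\ref{RLe} as in (\ref{onuevt})--(\ref{seven7}): the amplitude $e^{i\varphi}\chi_{\Omega\cup\Omega^*}(\widetilde{q_1}-\widetilde{q_2})e^{\Phi^\sharp}$ satisfies the $C^s$-type bound needed there, since $\chi_{\Omega\cup\Omega^*}(\widetilde{q_1}-\widetilde{q_2})\in B^{2,\infty}_s$, $\|\Phi^\sharp\|_{C^s}\lesssim\tau^{s/(s+2)}$ by (\ref{prunoz}), and $e^{i\varphi}\in C^s(\mathbb{R}^n)$ with bounded norm (Sobolev embedding, $p$ large); this produces the terms $e^{-4\pi\epsilon^2\tau^2|\xi'|^2/|\xi|^2}+\epsilon^s$ (up to a power of $\tau$ that the eventual choice of $\epsilon$ in Theorem~\ref{SEP} absorbs). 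The $r$-contributions to the diagonal amplitudes and the replacement $\Phi_j^\sharp\rightsquigarrow\Phi_j$ (via (\ref{ineq}) and (\ref{prcuatroz})) cost only $O(\tau^{-s/(s+2)})$ by (\ref{prdosz}).

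The residual magnetic terms on the left of (\ref{uauww}) are $\int_B e^{i\varphi}(\chi_{\Omega\cup\Omega^*}\widetilde{A_1}-(\chi_{\Omega\cup\Omega^*}\widetilde{A_2}+\nabla\varphi))\cdot(DU_1\overline{U_2}+U_1\overline{DU_2})$, the quadratic term $\int_B e^{i\varphi}(\chi_{\Omega\cup\Omega^*}\widetilde{A_1}^2-(\chi_{\Omega\cup\Omega^*}\widetilde{A_2}+\nabla\varphi)^2)U_1\overline{U_2}$ (factor $a^2-b^2=(a-b)(a+b)$), and $\int_B e^{i\varphi}(\widetilde{A}-\nabla\varphi)\cdot\nabla\varphi\,U_1\overline{U_2}$. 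In each, one factor is the small quantity from the first paragraph ($L^n(B)$-norm $\le C\mathcal{E}^\theta$), while --- after the exponential cancellation of the second paragraph --- the remaining product of $U$'s (and, where present, $\nabla\varphi\in L^p$) lies in $L^{n/(n-1)}(B)$ with norm $\le C\tau^{(s+4)/(s+2)}$ by (\ref{prdosz}); H\"older bounds all three by $C\mathcal{E}^\theta\tau^{(s+4)/(s+2)}$. Finally, the magnetic estimate already established --- precisely the inequality (\ref{dmpoik}) obtained in the proof of Theorem~\ref{SMP}, applied to the extended potentials, together with $\|\cdot\|_{H^{-1}(B)}\le\|\cdot\|_{H^{-1}(\mathbb{R}^n)}$ --- gives $\mathcal{E}\le C|\log dist(C_1^\Gamma,C_2^\Gamma)|^{-s^2/((s+2)(n+ns+2s))}$ once $dist(C_1^\Gamma,C_2^\Gamma)$ is small enough, hence $\mathcal{E}^\theta\le C|\log dist(C_1^\Gamma,C_2^\Gamma)|^{-\theta s^2/((s+2)(n+ns+2s))}$.

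Collecting, (\ref{uauww}) now reads, up to the errors above, that $|\int_B e^{i\varphi}\chi_{\Omega\cup\Omega^*}(\widetilde{q_1}-\widetilde{q_2})(e^{i\xi\cdot x}e^{\Phi_1+\overline{\Phi_2}}+e^{i\xi^*\cdot x}e^{\Phi_1^*+\overline{\Phi_2^*}})|$ is bounded by the right-hand side of (\ref{final21}). By the even extension of the electric potentials, $\chi_{\Omega\cup\Omega^*}(\widetilde{q_1}-\widetilde{q_2})$ is even under $x_n\mapsto-x_n$, so the $e^{i\xi^*\cdot x}$-term produces $\mathcal{F}[\chi_{\Omega\cup\Omega^*}(\widetilde{q_1}-\widetilde{q_2})](\xi^*)=\mathcal{F}[\chi_{\Omega\cup\Omega^*}(\widetilde{q_1}-\widetilde{q_2})](\xi)$ after the same amplitude removal, doubling the main term. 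It remains to remove $e^{i\varphi+\Phi_1+\overline{\Phi_2}}$: combining the transport equations (\ref{mmmmmz}) for $\Phi_1,\Phi_2$ with $\rho_{1,0}=-\overline{\rho_{2,0}}=i\mu_1+\mu_2$ (see (\ref{jkwww})) gives $(i\mu_1+\mu_2)\cdot\nabla(i\varphi+\Phi_1+\overline{\Phi_2})=i(i\mu_1+\mu_2)\cdot(\nabla\varphi-\widetilde{A})$, whose right-hand side is $O(\mathcal{E})$ in $L^2(B)$ by Lemma~\ref{hdecom}; inverting the $\partial_{\overline z}$-operator (the solid Cauchy-transform estimate used in Section~\ref{mollification}), $i\varphi+\Phi_1+\overline{\Phi_2}$ differs in $L^\infty(B)$ by $O(\mathcal{E})$ from a function annihilated by $(i\mu_1+\mu_2)\cdot\nabla$, so Lemma~\ref{KyUh} (reduced to the scalar $\chi_{\Omega\cup\Omega^*}(\widetilde{q_1}-\widetilde{q_2})$) plus an $O(\mathcal{E})$ correction yields $\int_B e^{i\varphi}\chi_{\Omega\cup\Omega^*}(\widetilde{q_1}-\widetilde{q_2})e^{i\xi\cdot x}e^{\Phi_1+\overline{\Phi_2}}=\mathcal{F}[\chi_{\Omega\cup\Omega^*}(\widetilde{q_1}-\widetilde{q_2})](\xi)+O(\mathcal{E})$; assembling everything gives (\ref{final21}). \textbf{The main obstacle} is exactly this last removal: one must strip the three amplitudes $e^{i\varphi},e^{\Phi_1},e^{\overline{\Phi_2}}$ from the diagonal term simultaneously, which works only because the Hodge decomposition forces $\nabla\varphi$ to match $\widetilde{A}$ up to $O(\mathcal{E})$ --- the very combination entering the transport equation for $\Phi_1+\overline{\Phi_2}$ --- making $e^{i\varphi+\Phi_1+\overline{\Phi_2}}$ ``almost holomorphic''. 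A secondary nuisance, responsible for the loss $\mathcal{E}\rightsquigarrow\mathcal{E}^\theta$ with $\theta<2/n$, is that $\nabla\varphi$ is merely $L^p$, so the magnetic remainders must be paired with the CGO products in $L^{n/(n-1)}$ via interpolation rather than by a clean $L^2$--$L^2$ estimate.
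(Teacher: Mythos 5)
Your proposal follows the paper's proof of Proposition~\ref{electyc} step for step: the Hodge potential $\psi$ from Lemma~\ref{hdecom}, the cutoff gauge $\varphi=\chi(\psi-\underline\psi)$ fed into Lemma~\ref{coAlidww}, the diagonal/off-diagonal splitting of $U_1\overline{U_2}$ with the quantitative Riemann--Lebesgue Lemma~\ref{RLe} for the off-diagonal pieces, the $L^2$--$L^p$ interpolation giving $L^n$-control of the residual magnetic terms by $\mathcal{E}^\theta$ and hence (via Theorem~\ref{SMP}) by a negative power of $|\log dist|$, and finally the removal of the amplitude $e^{\Phi_1+\overline{\Phi_2}+i\varphi}$ using the transport identity. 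Two slips in your last step: the weighted-$L^2$ inversion of $(i\mu_1+\mu_2)\cdot\nabla$ gives smallness of $\Phi_1+\overline{\Phi_2}+i\varphi$ in $L^2(B)$, not in $L^\infty(B)$ as you assert, and $L^2$-smallness is in fact all that is needed --- paired with the $L^2$-bounded amplitude $\chi_{\Omega\cup\Omega^*}(\widetilde q_1-\widetilde q_2)$ and with inequality (\ref{ineq}); consequently the paper never invokes Lemma~\ref{KyUh} here (which as stated concerns the vector pairing $(i\mu_1+\mu_2)\cdot W$ and does not directly transfer to the scalar integrand $\chi_{\Omega\cup\Omega^*}(\widetilde q_1-\widetilde q_2)$), the amplitude being stripped by the $L^2$-smallness argument alone.
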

\begin{proof}
The proof is similar to the proof of Lemma $5.3$ in \cite{CP}. Consider the function $\psi$ given by Lemma \ref{hdecom} with $p>n$. Notice that this function not necessarily satisfies the vanishing condition on $\partial B$. We will remedy this by using a cutoff argument. So consider a smooth function $\chi\in C_0^\infty(B)$ such that $\chi(x)=1$ in $B^\prime$ and set $\varphi= \chi (\psi-\underline{\psi})$ (note that $\varphi|_{\partial B}=0$). Thus, by (\ref{mreyw}), Morrey's inequality and the boundedness of $B$ we get
\begin{equation}\label{phiiiii111}
\left \| \varphi  \right \|_{L^\infty(B)} + \left \| \nabla \varphi  \right \|_{L^n(B)} + \left \| \nabla \psi  \right \|_{L^n(B)}\leq C_1.
\end{equation}
Now we compute the left-hand side of (\ref{uauww}) by using the functions $U_1, U_2\in H^{1}(B)$ satisfying in $B$: $\mathcal{L}_{\chi_{\Omega	\cup\Omega^*} \widetilde{A_1},\, \chi_{\Omega\cup\Omega^*} \widetilde{q_1}}U_1=0$ with $U_1|_{\partial B^+\cap \left\{ x_n=0\right\}}=0$ and $\mathcal{L}_{\overline{\chi_{\Omega\cup\Omega^*} \widetilde{A_2}}, \overline{\chi_{\Omega\cup\Omega^*} \widetilde{q_2}}}U_2=0$ with $U_2|_{\partial B^+\cap \left\{ x_n=0\right\}}=0$. Such functions were constructed implicitly in the proof of Proposition \ref{construcsolutions}. Also consider $\rho_1$ and $\rho_2$ defined in the section $2$, see (\ref{jkw})-(\ref{jkwww}). Now we will divide the proof into three steps. The first step will be to prove the following claim. 
\subsection*{Claim $1$} If $\theta\in \left( 0, 2/n\right)$ then there exist a constant $C_2>0$ such that 
\begin{equation}\label{claim1}
\begin{aligned}
& \left | \int_B e^{i\varphi} \chi_{\Omega\cup\Omega^*}(\widetilde{q_1}-\widetilde{q_2}) U_1\overline{U_2} \right | \\
&  \leq C_2\left(   e^{2\tau \kappa}  dist \, (C_1^\Gamma, C_2^\Gamma)  +\left |   \log  dist \, (C_1^\Gamma, C_2^\Gamma)   \right |^{-\frac{\theta s^2}{(s+2)(n+ns+2s)}}  \tau^{(s+4)/(s+2)}    \right),
\end{aligned}
\end{equation}
for every $\xi \in E_R$. In fact, adding and subtracting the same terms we have the obvious identity:
\begin{equation}\label{cambio}
\begin{aligned}
&\int_B e^{i\varphi} \chi_{\Omega\cup\Omega^*}(\widetilde{q_1}-\widetilde{q_2}) U_1\overline{U_2}    \\
&  = \int_{B} e^{i\varphi} ( \chi_{\Omega\cup\Omega^*} \widetilde{A_1}-(\chi_{\Omega\cup\Omega^*} \widetilde{A_2}+\nabla \varphi))\cdot \left( DU_1\overline{U_2} +  U_1 \overline{DU}_2 \right)\\
&   + \int_B e^{i\varphi}\left(  \chi_{\Omega\cup\Omega^*} A_1^2-(\chi_{\Omega\cup\Omega^*} \widetilde{A_2}+\nabla \varphi)^2 + \chi_{\Omega\cup\Omega^*} (\widetilde{q_1}-\widetilde{q_2})\right.\\
& \qquad    \qquad  \left.  -(\chi_{\Omega\cup\Omega^*}( \widetilde{A_1}- \widetilde{A_2}) -\nabla \varphi)\cdot \nabla \varphi \right) U_1\overline{U}_2\\
&   - \int_{B} e^{i\varphi} ( \chi_{\Omega\cup\Omega^*} \widetilde{A_1}-(\chi_{\Omega\cup\Omega^*} \widetilde{A_2}+\nabla \varphi))\cdot \left( DU_1\overline{U_2} +  U_1 \overline{DU}_2 \right)\\
&  -  \int_B e^{i\varphi}\left[ \chi_{\Omega\cup\Omega^*} \widetilde{A_1}^2-(\chi_{\Omega\cup\Omega^*} \widetilde{A_2}+\nabla \varphi)^2 \right.\\
& \qquad  \qquad  \qquad  \qquad  \qquad \left.- (\chi_{\Omega\cup\Omega^*}( \widetilde{A_1}- \widetilde{A_2}) -\nabla \varphi)\cdot \nabla \varphi \right]  U_1\overline{U}_2.
\end{aligned}
\end{equation}
For convenience and as in \cite{CP}, we set $\varphi^\prime:=(1-\chi)(\psi-\underline{\psi})$.  Then we have  $\psi-\underline{\psi}= \chi(\psi-\underline{\psi})+ (1-\chi)(\psi-\underline{\psi})=\varphi + \varphi^\prime$, which implies that
\begin{equation}\label{diver}
\nabla \psi = \nabla \varphi+\nabla \varphi^\prime.
\end{equation}
From this identity, it follows that
\begin{equation}\label{decz}
\chi_{\Omega\cup\Omega^*} \widetilde{A_1}-(\chi_{\Omega\cup\Omega^*} \widetilde{A_2}+\nabla \psi)= \chi_{\Omega\cup\Omega^*} \widetilde{A_1}-(\chi_{\Omega\cup\Omega^*} \widetilde{A_2}+\nabla \varphi) -\nabla\varphi^\prime
\end{equation}
and
\begin{equation}
\begin{aligned}\label{diver1}
& \chi_{\Omega\cup\Omega^*}\widetilde{A_1}^2-(\chi_{\Omega\cup\Omega^*} \widetilde{A_2}+\nabla \psi )^2\\
&\;  = \left[ \chi_{\Omega\cup\Omega^*} \widetilde{A_1}-(\chi_{\Omega\cup\Omega^*}  \widetilde{A_2}+\nabla \psi )\right]\cdot \left[\chi_{\Omega\cup\Omega^*}  \widetilde{A_1}+(\chi_{\Omega\cup\Omega^*}  \widetilde{A_2}+\nabla \psi )\right]\\
&\;  = \left [ \chi_{\Omega\cup\Omega^*}  \widetilde{A_1}-(\chi_{\Omega\cup\Omega^*}  \widetilde{A_2}+\nabla \varphi) - \nabla\varphi^\prime   \right ]\\
& \qquad   \qquad   \qquad  \qquad   \qquad    \qquad   \qquad  \cdot  \left [\chi_{\Omega\cup\Omega^*}  \widetilde{A_1}+(\chi_{\Omega\cup\Omega^*}  \widetilde{A_2}+\nabla \varphi) +\nabla\varphi^\prime   \right ]\\
&\;  =  \chi_{\Omega\cup\Omega^*}  \widetilde{A_1}^2-(\chi_{\Omega\cup\Omega^*}  \widetilde{A_2}+\nabla \varphi )^2 +  \left [  \chi_{\Omega\cup\Omega^*}  \widetilde{A_1}-(\chi_{\Omega\cup\Omega^*}  \widetilde{A_2}+\nabla \varphi)  \right ]\cdot  \nabla\varphi^\prime\\
&\quad -   \left [  \chi_{\Omega\cup\Omega^*}  \widetilde{A_1}+(\chi_{\Omega\cup\Omega^*}  \widetilde{A_2}+\nabla \varphi)  \right ]\cdot  \nabla\varphi^\prime - \nabla \varphi^\prime \cdot \nabla \varphi^\prime.
\end{aligned}
\end{equation}
Hence, replacing (\ref{diver})-(\ref{diver1}) into (\ref{cambio}) and by a straightforward computation, we obtain 
\[
\int_B e^{i\varphi} \chi_{\Omega\cup\Omega^*} (\widetilde{q_1}-\widetilde{q_2}) U_1\overline{U_2}:= I +II + III + IV,
\]
where
\begin{equation}
I:=- \int_B e^{i\varphi} \left (\chi_{\Omega\cup\Omega^*}  \widetilde{A_1}^2 - (\chi_{\Omega\cup\Omega^*}  \widetilde{A_2} + \nabla \psi)^2\right) U_1\overline{U_2} ,
\end{equation}
\begin{equation}
II: \int_B e^{i\varphi} \left(\chi_{\Omega\cup\Omega^*}  \widetilde{A_1} - \chi_{\Omega\cup\Omega^*}  \widetilde{A_2} - \nabla \psi\right)\cdot \nabla \varphi \, U_1\overline{U_2},
\end{equation}
\begin{equation}
III:=- \int_B e^{i\varphi} \left(\chi_{\Omega\cup\Omega^*}  \widetilde{A_1} - \chi_{\Omega\cup\Omega^*}  \widetilde{A_2} - \nabla \psi\right)\cdot (DU_1 \overline{U_2}+U_1\overline{DU_2} ),
\end{equation}
and
\begin{equation}\label{cuatrr}
\begin{aligned}
& IV:= \int_B e^{i\varphi}  (\chi_{\Omega\cup\Omega^*}  \widetilde{A_1} - \chi_{\Omega\cup\Omega^*} \widetilde{A_2} - \nabla \psi) \cdot  (DU_1 \overline{U_2}+U_1\overline{DU_2} ) \\
&  \qquad  \quad +\int_B e^{i\varphi} \left[  \chi_{\Omega\cup\Omega^*}  \widetilde{A_1}^2 - (\chi_{\Omega\cup\Omega^*}  \widetilde{A_2} + \nabla \psi)^2 + \chi_{\Omega\cup\Omega^*}  (\widetilde{q_1}-\widetilde{q_2})\right.\\
& \qquad  \qquad \qquad  \qquad  \left. - (\chi_{\Omega\cup\Omega^*}  \widetilde{A_1} - \chi_{\Omega\cup\Omega^*} \widetilde{A_2} - \nabla \psi)\cdot \nabla \varphi \right] U_1\overline{U_2}.
\end{aligned}
\end{equation}
Then, by the triangle inequality, it is immediate that
\begin{equation}\label{ine11}
\left | \int_B e^{i\varphi} \chi_{\Omega\cup\Omega^*} (\widetilde{q_1}-\widetilde{q_2}) U_1\overline{U_2} \right | \leq \left |  I \right | + \left |II  \right | + \left | III \right | + \left | IV \right |.
\end{equation}
Now we will estimate  the terms from $\left | I \right |$ to $  \left | IV\right |$. 
From (\ref{unos})-(\ref{doss}) with $\Omega\cup\Omega^*$ replaced by $B$ and taking into account (\ref{prs}), we have 
\begin{equation}\label{u1u2ww}
\begin{aligned}
 U_1\overline{U_2}  &= e^{i\xi\cdot x} ( e^{\Phi_1^{\sharp} + \overline{\Phi_2^\sharp} }  +  e^{\Phi_1^\sharp} \overline{r_2} + e^{\overline{\Phi_2^\sharp}}r_1 + r_1\overline{r_2}  )\\
 &\quad  -e^{ i\left( \xi^\prime, 2  \sqrt{\tau^{2}- \frac{\left | \xi \right |^2}{4}} \frac{ \left |  \xi^\prime \right |}{ \left |  \xi \right |} \right)} ( e^{\Phi_1^{\sharp} + \overline{{\Phi_2^\sharp}^*} }  +  e^{\Phi_1^\sharp} \overline{r_2^*} + e^{\overline{{\Phi_2^\sharp}^*}}r_1 + r_1\overline{r_2^*}  )\\
 & \quad - e^{ i\left( \xi^\prime, -2  \sqrt{\tau^{2}- \frac{\left | \xi \right |^2}{4}} \frac{ \left |  \xi^\prime \right |}{ \left |  \xi \right |} \right)} ( e^{{\Phi_1^{\sharp}}^* + \overline{{\Phi_2^\sharp}^*} }  +  e^{{\Phi_1^\sharp}^*} \overline{r_2} + e^{\overline{{\Phi_2^\sharp}}}r_1^* + r_1^*\overline{r_2}  )\\
 & \quad + e^{i\xi^*\cdot x} ( e^{{\Phi_1^{\sharp}}^* + \overline{{\Phi_2^\sharp}^*} }  +  e^{{\Phi_1^\sharp}^*} \overline{r_2^*} + e^{\overline{{\Phi_2^\sharp}^*}}r_1^* + r_1^*\overline{r_2^*}  ).
\end{aligned}
\end{equation}
For $j=1,2$, from (\ref{prdoszss}) with $\Omega\cup\Omega^*$ replaced by $B$ and Sobolev's embedding, we deduce that there exist two positive constant $C_2$ and $C_3$ such that
\begin{equation}\label{r1r2ww} 
\begin{aligned}
& \left \| r_j \right \|_{L^{2n/(n-2)}(B)} + \left \| r_j^* \right \|_{L^{2n/(n-2)}(B)} \\
& \leq C_2 \left(  \left \| r_j \right \|_{W^{1,2}(B)} +  \left \| r_j^* \right \|_{W^{1,2}(B)}   \right)\\
&  \leq C_3\, \tau^{2/(s+2)}.
\end{aligned}
\end{equation}
Also, from (\ref{prunozss}) and the the boundedness of $B$, we get
\begin{equation}\label{trutyz1}
\left \| e^{\Phi_j^\sharp} \right \|_{L^{2n/(n-2)}(B)} + \left \| e^{\Phi_j^{\sharp^*}} \right \|_{L^{2n/(n-2)}(B)}\leq C_4.
\end{equation}
Thus, from (\ref{phiiiii111}) and the boundedness of $B$ it follows that $\chi_{\Omega\cup\Omega^*}  (\widetilde{A_1}-\widetilde{A_2})- \nabla \psi\in L^n(B)$. For the same reason we have that $\chi_{\Omega\cup\Omega^*}  (\widetilde{A_1}-\widetilde{A_2})+\nabla \psi\in L^n(B)$. Hence, from (\ref{phiiiii111}), (\ref{r1r2ww})-(\ref{trutyz1}), the boundedness of $B$ and applying H\"older's inequality for $1/n + 1/n + (n-2)/(2n)+ (n-2)/(2n)=1$, we get
\begin{equation}\label{uno11}
\begin{aligned}
  \left | I \right | &  =\left |  \int_B e^{i\varphi} (\chi_{\Omega\cup\Omega^*}  \widetilde{A_1} - (\chi_{\Omega\cup\Omega^*}  \widetilde{A_2} + \nabla \psi)) \right.\\
  & \qquad \qquad \qquad  \qquad \qquad \left. \cdot   (\chi_{\Omega\cup\Omega^*}  \widetilde{A_1} + (\chi_{\Omega\cup\Omega^*}  \widetilde{A_2} + \nabla \psi)) U_1\overline{U_2}   \right |. \\
  &\quad   \leq C_5 \left \| \chi_{\Omega\cup\Omega^*} (\widetilde{A_1}-\widetilde{A_2}) -\nabla \psi \right \|_{L^n(B)} \tau^{4/(s+2)}.
\end{aligned}
\end{equation}
To estimate the second term we use that $\chi_{\Omega\cup\Omega^*}  (\widetilde{A_1}-\widetilde{A_2})- \nabla \psi\in L^n(B)$. Moreover, from (\ref{phiiiii111}) we have that $\nabla \varphi \in L^n(B)$. Thus, applying H\"older's inequality for $1/n + 1/n + (n-2)/(2n)+ (n-2)/(2n)=1$, we get
\begin{equation}\label{dos22}
 \left | II \right | \leq C_6  \left \| \chi_{\Omega\cup\Omega^*} (\widetilde{A_1}-\widetilde{A_2}) -\nabla \psi \right \|_{L^n(B)} \tau^{4/(s+2)}.
\end{equation}
To estimate the terms $III$ and $IV$ we take into account the computations done in (\ref{lilax1}). Thus, for $j=1,2$, from (\ref{r1r2ww}), (\ref{prdoszss}) with $\Omega\cup\Omega^*$ replaced by $B$ and H\"older's inequality applied to $1/n+ (n-2)/(2n)+1/2=1$, we obtain
\begin{equation}\label{tres33}
 \left | III \right | \leq C_7  \left \| \chi_{\Omega\cup\Omega^*} (\widetilde{A_1}-\widetilde{A_2}) -\nabla \psi \right \|_{L^n(B)} \tau^{(s+4)/(s+2)}.
\end{equation}
The estimate for $IV$ requires a more delicate analysis. Replacing (\ref{decz})-(\ref{diver1}) into (\ref{cuatrr}) and by a straightforward computation, we get the following identity:
\begin{align*}
& IV:= \int_B e^{i\varphi}  (\chi_{\Omega\cup\Omega^*}  \widetilde{A_1} - \chi_{\Omega\cup\Omega^*}  \widetilde{A_2} - \nabla \varphi) \cdot  (DU_1 \overline{U_2}+U_1\overline{DU_2} ) \\
&  \quad  \quad +\int_B e^{i\varphi} \left[  \chi_{\Omega\cup\Omega^*}  \widetilde{A_1}^2 - (\chi_{\Omega\cup\Omega^*}  \widetilde{A_2} + \nabla \varphi)^2 + \chi_{\Omega\cup\Omega^*}  (\widetilde{q_1}-\widetilde{q_2})\right.\\
& \qquad  \qquad \qquad   \left. - (\chi_{\Omega\cup\Omega^*}  \widetilde{A_1} -\chi_{\Omega\cup\Omega^*}  \widetilde{A_2} - \nabla \varphi)\cdot \nabla \varphi \right] U_1\overline{U_2}\\
&\quad \quad  -\int_{B}e^{i\varphi}\nabla \varphi^{\prime}  \cdot  (DU_1 \overline{U_2}+U_1\overline{DU_2} ) \\
&\quad \quad -\int_B e^{i\varphi}\left(2\chi_{\Omega\cup\Omega^*}  \widetilde{A_2}\cdot \nabla \varphi^\prime+\nabla\psi\cdot \nabla\varphi^\prime\right)U_1\overline{U_2}.
\end{align*}
Notice that since the functions $\chi_{\Omega\cup\Omega^*}  \widetilde{A_2}$ and $\nabla \varphi^\prime$ have disjoint supports, it follows that $\int_B e^{i\varphi}(\chi_{\Omega\cup\Omega^*}  \widetilde{A_2})\cdot \nabla \varphi^{\prime}=0$. Hence, Lemma \ref{coAlidww} and the triangular inequality imply that
\begin{align*}
& \left | IV \right | \leq C_8 \left(   dist \, (C_1^\Gamma, C_2^\Gamma)  \left \| U_1 \right \|_{H^1(\Omega\cup\Omega^*)}  \left \| U_2 \right \|_{H^1(\Omega\cup\Omega^*)}  \right.\\
& \qquad \qquad \left. + \left | \int_B e^{i\varphi}  \nabla \varphi^\prime\cdot (U_1 \overline{DU_2}+ DU_1\overline{U_2}) + \nabla \psi \cdot \nabla \varphi^\prime U_1 \overline{U_2}  \right |    \right).
\end{align*}
Once again applying H\"older's inequality and by similar arguments as used to estimate $U_1 \overline{DU_2}+ DU_1\overline{U_2}$ and $U_1 \overline{U_2}$, for the terms $I, II$ and $III$;  we get
\begin{equation}\label{cuatro44}
\begin{aligned}
 \left | IV \right | &  \leq C_9 \left( dist \, (C_1^\Gamma, C_2^\Gamma)  \left \| U_1 \right \|_{H^1(\Omega\cup\Omega^*)}  \left \| U_2 \right \|_{H^1(\Omega\cup\Omega^*)} \right.\\
  &\qquad  \qquad + \left.  \left \| \nabla \varphi^\prime \right \|_{L^n(B)}  \tau^{(s+4)/(s+2)} \right). 
\end{aligned}
\end{equation}
Furthermore, an elementary interpolation, the boundedness of $B$, (\ref{mreyww}) and the estimate (\ref{dmpoik}), imply that
\begin{equation}\label{diez1010}
\begin{aligned}
& \left \| \chi_{\Omega\cup\Omega^*} (\widetilde{A_1}-\widetilde{A_2})- \nabla \psi \right \|_{L^n(B)} \\
&   \leq \left \| \chi_{\Omega\cup\Omega^*} (\widetilde{A_1}-\widetilde{A_2})- \nabla \psi \right \|_{L^2(B)}^\theta  \left \| \chi_{\Omega\cup\Omega^*}  (\widetilde{A_1}-\widetilde{A_2})- \nabla \psi \right \|_{L^p(B)}^{1-\theta}\\
& \qquad \leq C_{10}   \left \| d (\chi_{\Omega\cup\Omega^*} (\widetilde{A_1}-\widetilde{A_2})) \right \|_{H^{-1}(B)}^\theta \\
& \qquad \leq C_{10}  \left \| d (\chi_{\Omega\cup\Omega^*} (\widetilde{A_1}-\widetilde{A_2})) \right \|_{H^{-1}(\mathbb{R}^n)}^\theta\\
& \qquad \leq C_{11} \left |   \log  dist \, (C_1^\Gamma, C_2^\Gamma)   \right |^{-\frac{\theta s^2}{(s+2)(n+ns+2s)}},
\end{aligned}
\end{equation}
for every $\theta\in \left( 0,2/n \right)$ and $p$ is chosen such that $\theta/2 + (1-\theta)/p=1/n$. Notice that since $\varphi^\prime=(1-\chi)(\psi-\psi^*)$ we deduce that
\[
\left \| \nabla \varphi^\prime \right \|_{L^n(B)} \leq C_7 \left \| \psi - \psi^* \right \|_{W^{1,n}(B\setminus\overline{B^\prime})}
\]
and again by an elementary interpolation, (\ref{rywww}), the boundedness of $B$, Theorem \ref{SMP} and since $1-\chi\equiv 0$ in $B^\prime$, we get
\begin{equation}\label{once1111}
\begin{aligned}
 \left \| \nabla \varphi^\prime \right \|_{L^n(B)} & \leq C_{12}  \left \| \psi - \psi^* \right \|_{H^1(B\setminus\overline{B^\prime})}^\theta   \left \| \psi - \psi^* \right \|_{W^{1,p}(B\setminus\overline{B^\prime})}^{1-\theta}\\
&  \leq C_{13}   \left \| d (\chi_{\Omega\cup\Omega^*}(\widetilde{A_1}-\widetilde{A_2})) \right \|_{H^{-1}(B)}^\theta \\
& \leq C_{13}  \left \| d (\chi_{\Omega\cup\Omega^*}(\widetilde{A_1}-\widetilde{A_2})) \right \|_{H^{-1}(\mathbb{R}^n)}^\theta\\
&  \leq C_{14} \left |   \log  dist \, (C_1^\Gamma, C_2^\Gamma)   \right |^{-\frac{\theta s^2}{(s+2)(n+ns+2s)}}.
\end{aligned}
\end{equation}
Hence, using estimate (\ref{diez1010}) into (\ref{uno11})-(\ref{tres33}) we get
\[
\left |  I\right |+ \left | II \right | + \left | III \right |\leq C_{15}   \left |   \log  dist \, (C_1^\Gamma, C_2^\Gamma)   \right |^{-\frac{\theta s^2}{(s+2)(n+ns+2s)}} \tau^{(s+4)/(s+2)},
\]
and using estimate (\ref{once1111}) into (\ref{cuatro44}) we obtain
\[
\left |IV  \right | \leq C_{16} (   e^{2\tau k}  dist \, (C_1^\Gamma, C_2^\Gamma)  +\left |   \log  dist \, (C_1^\Gamma, C_2^\Gamma)   \right |^{-\frac{\theta s^2}{(s+2)(n+ns+2s)}}   \tau^{(s+4)/(s+2)} ).
\]
We conclude the proof of  Claim $1$ by combining the two above estimates into (\ref{ine11}) and then the estimate (\ref{claim1}) follows.\\ 

The second step will be to prove the following claim.
\subsection*{Claim $2$} There exists $C_{17}>0$ such that
\begin{equation}\label{claim2}
\begin{aligned}
& \left | \int_B e^{i\varphi}    \chi_{\Omega\cup\Omega^*}(\widetilde{q_1}-\widetilde{q_2}) e^{i\xi\cdot x} e^{\Phi_1^\sharp + \overline{\Phi_2^\sharp}}   + \int_B e^{i\varphi}   \chi_{\Omega\cup\Omega^*} (\widetilde{q_1}-\widetilde{q_2}) e^{i\xi^*\cdot x} e^{{\Phi_1^\sharp}^* + \overline{{\Phi_2^\sharp}^*}}  \right |\\
& \leq C_{17} \left(  \left | \int_B e^{i\varphi} \chi_{\Omega\cup\Omega^*}(\widetilde{q_1}-\widetilde{q_2}) U_1\overline{U_2} \right | \right. \\
&\qquad   \left. + \tau^{-s/(s+2)} +    \left |   \log  dist \, (C_1^\Gamma, C_2^\Gamma)   \right |^{-\frac{ s^2}{(s+2)(n+ns+2s)}} + e^{-4 \pi \epsilon^2 \tau^2 \frac{\left |\xi^\prime  \right |^2}{\left |  \xi \right |^2} }  + \epsilon^s  \right),
\end{aligned}
\end{equation}
for every $\xi \in \bigcap_{l=1}^{n-1}E_l$. In fact, the idea will be to isolate the function $\chi_{\Omega\cup\Omega^*}(\widetilde{q_1}-\widetilde{q_2})$ from the left-hand side of (\ref{claim1}). For convenience, we set
\begin{equation*}
V=   \int_B e^{i\varphi}  \chi_{\Omega\cup\Omega^*}(\widetilde{q_1}-\widetilde{q_2}) e^{i\xi^*\cdot x} \left(  e^{{\Phi_1^\sharp}^*} \overline{r_2^*} + e^{\overline{{\Phi_2^\sharp}^*}} r_1^* + r_1^* \overline{r_2^*} \right),
\end{equation*}
\begin{equation*}
VI  =   \int_B e^{i\varphi}  \chi_{\Omega\cup\Omega^*}(\widetilde{q_1}-\widetilde{q_2}) e^{i\xi\cdot x} \left(  e^{{\Phi_1^\sharp}} \overline{r_2} + e^{\overline{{\Phi_2^\sharp}}} r_1 + r_1 \overline{r_2} \right),
\end{equation*}
\begin{align*}
VII &=  \int_B e^{i\varphi}  \chi_{\Omega\cup\Omega^*}(\widetilde{q_1}-\widetilde{q_2})  e^{i\left( \xi^\prime, 2\tau  \sqrt{1-\tau^{-2} \frac{\left | \xi \right |^2}{4}} \frac{ \left |  \xi^\prime \right |}{ \left |  \xi \right |} \right)} \\
& \qquad  \qquad  \qquad  \qquad  \qquad  \qquad  \qquad  \qquad    \times \left(  e^{{\Phi_1^\sharp}} \overline{r_2^*} + e^{\overline{{\Phi_2^\sharp}^*}} r_1 + r_1 \overline{r_2^*} \right),
\end{align*}
\begin{align*}
VIII &=  \int_B e^{i\varphi} \chi_{\Omega\cup\Omega^*}(\widetilde{q_1}-\widetilde{q_2})  e^{i\left( \xi^\prime, -2\tau  \sqrt{1-\tau^{-2} \frac{\left | \xi \right |^2}{4}} \frac{ \left |  \xi^\prime \right |}{ \left |  \xi \right |} \right)}  \\
&  \qquad  \qquad  \qquad  \qquad  \qquad   \qquad  \qquad  \qquad  \times  \left(  e^{{\Phi_1^\sharp}^*} \overline{r_2} + e^{\overline{{\Phi_2^\sharp}^*}} r_1^* + r_1^* \overline{r_2} \right),
\end{align*}
\begin{equation*}
IX =  \int_B e^{i\varphi} \chi_{\Omega\cup\Omega^*}(\widetilde{q_1}-\widetilde{q_2})  e^{i\left( \xi^\prime, 2\tau  \sqrt{1-\tau^{-2} \frac{\left | \xi \right |^2}{4}} \frac{ \left |  \xi^\prime \right |}{ \left |  \xi \right |} \right)} e^{\Phi_1^\sharp + \overline{\Phi_2^\sharp}},
\end{equation*}
\begin{equation*}
X =  \int_B e^{i\varphi}  \chi_{\Omega\cup\Omega^*}(\widetilde{q_1}-\widetilde{q_2})  e^{i\left( \xi^\prime, -2\tau  \sqrt{1-\tau^{-2} \frac{\left | \xi \right |^2}{4}} \frac{ \left |  \xi^\prime \right |}{ \left |  \xi \right |} \right)} e^{{\Phi_1^\sharp}^* + \overline{{\Phi_2^\sharp}^*}}.
\end{equation*}
Thus, adding and subtracting terms we obtain
\begin{equation}\label{jijo}
\begin{aligned}
& \left | \int_B e^{i\varphi}     \chi_{\Omega\cup\Omega^*}(\widetilde{q_1}-\widetilde{q_2})  e^{i\xi\cdot x} e^{\Phi_1^\sharp + \overline{\Phi_2^\sharp}}   + \int_B e^{i\varphi}     \chi_{\Omega\cup\Omega^*}(\widetilde{q_1}-\widetilde{q_2})  e^{i\xi^*\cdot x} e^{{\Phi_1^\sharp}^* + \overline{{\Phi_2^\sharp}^*}}  \right |\\
& \leq  \left |  \int_B e^{i\varphi}  \chi_{\Omega\cup\Omega^*}(\widetilde{q_1}-\widetilde{q_2}) U_1\overline{U_2}  \right | + \left |  V+VI +VII +VIII +IX +X \right |.
\end{aligned}
\end{equation}
Now the task is to estimate each term of the right-hand side of the above inequality. From (\ref{prdoszss}) with $\Omega\cup\Omega^*$ replaced by $B$, (\ref{phiiiii111}), the boundedness of $B$ and H\"older's inequality in $L^2(B)$,  we get
\begin{equation}\label{jeji}
\left | V + VI + VII + VIII \right |  \leq C_{18}\, \tau^{-s/(s+2)}.
\end{equation}
To estimate $IX$ and $X$ require a more delicate analysis. Adding and subtracting terms it is easy to check that 
\begin{equation}\label{000}
\begin{aligned}
\left | IX \right |& \leq  \left |  \int_B  \chi_{\Omega\cup\Omega^*}(\widetilde{q_1}-\widetilde{q_2})  e^{i\left( \xi^\prime, 2\tau  \sqrt{1-\tau^{-2} \frac{\left | \xi \right |^2}{4}} \frac{ \left |  \xi^\prime \right |}{ \left |  \xi \right |} \right)} \right | \\
& \quad + \left |    \int_B  \chi_{\Omega\cup\Omega^*}(\widetilde{q_1}-\widetilde{q_2})  e^{i\left( \xi^\prime, 2\tau  \sqrt{1-\tau^{-2} \frac{\left | \xi \right |^2}{4}} \frac{ \left |  \xi^\prime \right |}{ \left |  \xi \right |} \right)} ( 1- e^{\Phi_1 + \overline{\Phi_2}+i\varphi} ) \right |\\
&\quad  + \left |    \int_B e^{i\varphi}  \chi_{\Omega\cup\Omega^*}(\widetilde{q_1}-\widetilde{q_2})  e^{i\left( \xi^\prime, 2\tau  \sqrt{1-\tau^{-2} \frac{\left | \xi \right |^2}{4}} \frac{ \left |  \xi^\prime \right |}{ \left |  \xi \right |} \right)} ( e^{\Phi_1 + \overline{\Phi_2}} - e^{\Phi_1^\sharp + \overline{\Phi_2^\sharp}}   ) \right |.
\end{aligned}
\end{equation}
On one hand, by Lemma \ref{heyw} we have $ \chi_{\Omega\cup\Omega^*}(\widetilde{q_1}-\widetilde{q_2})\in B^{2,\infty}_s(\mathbb{R}^n)$. Thus, by Lemma \ref{RLe} applied to $f= \chi_{\Omega\cup\Omega^*}(\widetilde{q_1}-\widetilde{q_2}) $ , $C_0=\left \|  \chi_{\Omega\cup\Omega^*}(\widetilde{q_1}-\widetilde{q_2}) \right \|_{B^{2,\infty}_s}$ and $\sigma=1$, we obtain
\begin{equation}\label{z100tr}
 \left |  \int_B  \chi_{\Omega\cup\Omega^*}(\widetilde{q_1}-\widetilde{q_2})  e^{i\left( \xi^\prime, 2\tau  \sqrt{1-\tau^{-2} \frac{\left | \xi \right |^2}{4}} \frac{ \left |  \xi^\prime \right |}{ \left |  \xi \right |} \right)} \right | \leq C_{19}\, \left( e^{-4 \pi \epsilon^2 \tau^2 \frac{\left |\xi^\prime  \right |^2}{\left |  \xi \right |^2} }  + \epsilon^s \right).
\end{equation}
On the other hand, from (\ref{xvtyu}) we deduce that
\begin{equation}\label{mukw2}
(i\mu_1+\mu_2)\cdot \nabla (\Phi_1+ \overline{\Phi_2} + i\varphi)= (\mu_1-i\mu_2)\cdot (\chi_{\Omega\cup\Omega^*}(\widetilde{A_1}-\widetilde{A_2})-\nabla \varphi).
\end{equation}
Thus, by the boundeness of  $((i\mu_1+\mu_2)\cdot \nabla)^{-1}$ in weighted $L^2$ spaces, the estimates (\ref{mreyww}) and (\ref{rywww}), we obtain
\begin{equation}\label{jyt091}
\begin{aligned}
& \left \|\Phi_1+ \overline{\Phi_2} + i\varphi  \right \|_{L^2(B)} \leq C_{19} \left \|  \chi_{\Omega\cup\Omega^*}(\widetilde{A_1}-\widetilde{A_2})-\nabla \varphi  \right \|_{L^2(B)}  \\
& \leq C_{20}\left( \left \|  \chi_{\Omega\cup\Omega^*}(\widetilde{A_1}-\widetilde{A_2})-\nabla \psi\right \|_{L^2(B)}  +  \left \| \psi-\psi^*  \right \|_{H^1(B\setminus \overline{B^\prime})}  \right)\\
& \leq C_{21} \left |   \log  dist \, (C_1^\Gamma, C_2^\Gamma)   \right |^{-\frac{ s^2}{(s+2)(n+ns+2s)}}.
\end{aligned}
\end{equation}
From (\ref{ineq}), the boundedness of $B$ and the above inequality we have
\begin{equation}\label{200}
\begin{aligned}
&  \left |    \int_B \chi_{\Omega\cup\Omega^*}(\widetilde{q_1}-\widetilde{q_2})  e^{i\left( \xi^\prime, 2\tau  \sqrt{1-\tau^{-2} \frac{\left | \xi \right |^2}{4}} \frac{ \left |  \xi^\prime \right |}{ \left |  \xi \right |} \right)}  \left( 1- e^{\Phi_1 + \overline{\Phi_2}+i\varphi} \right)  \right |\\  
& \leq C_{22} \left \|  \chi_{\Omega\cup\Omega^*}(\widetilde{q_1}-\widetilde{q_2}) \right \|_{L^2(B)} \left \|\Phi_1+ \overline{\Phi_2} + i\varphi  \right \|_{L^2(B)}\\
& \leq C_{23}   \left |   \log  dist \, (C_1^\Gamma, C_2^\Gamma)   \right |^{-\frac{ s^2}{(s+2)(n+ns+2s)}}.
 \end{aligned}
\end{equation}
From (\ref{zuno1}), (\ref{ineq}), (\ref{phiiiii111}) and following similar arguments to estimate (\ref{two2}), we get
\begin{equation}\label{300}
\begin{aligned}
& \left |    \int_B e^{i\varphi} \chi_{\Omega\cup\Omega^*}(\widetilde{q_1}-\widetilde{q_2}) e^{i(\xi^\prime, 2\tau \left | \xi^\prime \right |  )} ( e^{\Phi_1 + \overline{\Phi_2}} - e^{\Phi_1^\sharp + \overline{\Phi_2^\sharp}}   ) \right |  \\
&\qquad  \leq  \left \| e^{i\varphi} \chi_{\Omega\cup\Omega^*}(\widetilde{q_1}-\widetilde{q_2}) \right \|_{L^2(B)}  \left \| \Phi_1 - \Phi_1^\sharp+ \overline{\Phi_2} - \Phi_2^\sharp\right \|_{L^2(B)}\\
& \qquad \leq C_{24}\,  \tau^{-s/(s+2)}.
\end{aligned}
\end{equation}
Hence, by replacing (\ref{z100tr})-(\ref{300}) into (\ref{000}), we obtain
\begin{equation}\label{––p09}
\left | IX \right |\leq C_{25}( \tau^{-s/(s+2)} +    \left |   \log  dist \, (C_1^\Gamma, C_2^\Gamma)   \right |^{-\frac{ s^2}{(s+2)(n+ns+2s)}} + e^{-4 \pi \epsilon^2 \tau^2 \frac{\left |\xi^\prime  \right |^2}{\left |  \xi \right |^2} }  + \epsilon^s).
\end{equation}
Now we estimate the term $X$. From (\ref{xvtyu}) and (\ref{mukw2}) we deduce 
\begin{equation}
(i\mu_1^*+\mu_2^*)\cdot \nabla (\Phi_1^*+ \overline{\Phi_2^*} + i\varphi)= (\mu_1^*-i\mu_2^*)\cdot (\chi_{\Omega\cup\Omega^*}(\widetilde{A_1}-\widetilde{A_2})-\nabla \varphi),
\end{equation}
which imply that
\begin{equation}\label{jyt0911}
\begin{aligned}
& \left \|\Phi_1^*+ \overline{\Phi_2^*} + i\varphi  \right \|_{L^2(B)} \leq C_{19} \left \|  \chi_{\Omega\cup\Omega^*}(\widetilde{A_1}-\widetilde{A_2})-\nabla \varphi  \right \|_{L^2(B)}  \\
& \leq C_{20}\left( \left \|  \chi_{\Omega\cup\Omega^*}(\widetilde{A_1}-\widetilde{A_2})-\nabla \psi\right \|_{L^2(B)}  +  \left \| \psi-\psi^*  \right \|_{H^1(B\setminus \overline{B^\prime})}  \right)\\
& \leq C_{21} \left |   \log  dist \, (C_1^\Gamma, C_2^\Gamma)   \right |^{-\frac{ s^2}{(s+2)(n+ns+2s)}}.
\end{aligned}
\end{equation}
Thus, by similar reasoning applied to estimate $IX$, we can deduce that
\begin{equation}\label{––p091}
\left | X \right |\leq C_{26}( \tau^{-s/(s+2)} +    \left |   \log  dist \, (C_1^\Gamma, C_2^\Gamma)   \right |^{-\frac{ s^2}{(s+2)(n+ns+2s)}} + e^{-4 \pi \epsilon^2 \tau^2 \frac{\left |\xi^\prime  \right |^2}{\left |  \xi \right |^2} }  + \epsilon^s).
\end{equation}
We conclude the proof of Claim $2$ by combining the estimates (\ref{jeji}) and (\ref{––p09})-(\ref{––p091}) into (\ref{jijo}).\\

The last step will be to use the estimates from the claims $1$ and $2$ in order to deduce the assertion of this proposition. By a straightforward computation (adding and subtracting terms), for every $\xi\in \bigcap_{l=1}^{n-1}E_l$, we can deduce that
\begin{equation}\label{mmmmmzqa123}
\begin{aligned}
&\left |  \mathcal{F}\left[ \chi_{\Omega\cup\Omega^*} \widetilde{q_1}\right](\xi) -\mathcal{F}\left[ \chi_{\Omega\cup\Omega^*} \widetilde{q_2}\right](\xi)  \right |\\
& \quad= \left |   \int_B \chi_{\Omega\cup\Omega^*}  (\widetilde{q_1}-\widetilde{q_2})e^{i\xi\cdot x} + \int_B  \chi_{\Omega\cup\Omega^*}  (\widetilde{q_1}-\widetilde{q_2}) e^{i\xi^*\cdot x}  \right | \\
& \quad :=\left |   M_1+ M_2 +M_3+ M_4 + M_5   \right |,
\end{aligned}
\end{equation}
where
\[
M_1=  \int_B e^{i\varphi}      \chi_{\Omega\cup\Omega^*}  (\widetilde{q_1}-\widetilde{q_2}) e^{i\xi\cdot x} e^{\Phi_1^\sharp + \overline{\Phi_2^\sharp}}   + \int_B e^{i\varphi}     \chi_{\Omega\cup\Omega^*}  (\widetilde{q_1}-\widetilde{q_2}) e^{i\xi^*\cdot x} e^{{\Phi_1^\sharp}^* + \overline{{\Phi_2^\sharp}^*}},
\]
\[
M_2=  \int_B  \chi_{\Omega\cup\Omega^*}  (\widetilde{q_1}-\widetilde{q_2}) e^{i\xi\cdot x} (1- e^{\Phi_1+ \overline{\Phi_2}+i\varphi}),
\]
\[
M_3= \int_B \chi_{\Omega\cup\Omega^*}  (\widetilde{q_1}-\widetilde{q_2}) e^{i\xi^*\cdot x} (1- e^{\Phi_1^*+ \overline{\Phi_2^*}+i\varphi}), 
\]
\[
M_4=  \int_B e^{i\varphi} \chi_{\Omega\cup\Omega^*}  (\widetilde{q_1}-\widetilde{q_2}) e^{i\xi\cdot x} (e^{\Phi_1+ \overline{\Phi_2}} - e^{\Phi_1^\sharp + \overline{\Phi_2^\sharp}}  )  
\]
and
\[
M_5=  \int_B e^{i\varphi} \chi_{\Omega\cup\Omega^*}  (\widetilde{q_1}-\widetilde{q_2}) e^{i\xi^*\cdot x} (e^{\Phi_1^*+ \overline{\Phi_2^*}} - e^{{\Phi_1^\sharp}^* + \overline{{\Phi_2^\sharp}^*}}  ). 
\]
The task is now to estimate each term of the above identities. From the claims $1$ and $2$, we obtain
\begin{equation}\label{mmmmmzqa1}
\begin{aligned}
& \left | M_1 \right | \leq C_{26}( e^{2\tau \kappa}  dist \, (C_1^\Gamma, C_2^\Gamma)  +\left |   \log  dist \, (C_1^\Gamma, C_2^\Gamma)   \right |^{-\frac{\theta s^2}{(s+2)(n+ns+2s)}}  \tau^{(s+4)/(s+2)}   \\
&\quad \quad   +   \tau^{-s/(s+2)} +    \left |   \log  dist \, (C_1^\Gamma, C_2^\Gamma)   \right |^{-\frac{ s^2}{(s+2)(n+ns+2s)}} + e^{-4 \pi \epsilon^2 \tau^2 \frac{\left |\xi^\prime  \right |^2}{\left |  \xi \right |^2} }  + \epsilon^s ).
\end{aligned}
\end{equation}
From the boundedness of $B$, (\ref{ineq}) and (\ref{jyt091}), we get 
\begin{equation}
\begin{aligned}
\left |M_2  \right |&\leq C_{27} \left \|  \chi_{\Omega\cup\Omega^*}  (\widetilde{q_1}-\widetilde{q_2}) \right \|_{L^2(B)}   \left \| \Phi_1 + \overline{\Phi_2}+i\varphi \right \|_{L^2(B)}\\
& \leq C_{28}  \left |   \log  dist \, (C_1^\Gamma, C_2^\Gamma)   \right |^{-\frac{ s^2}{(s+2)(n+ns+2s)}}.
\end{aligned}
\end{equation}
By similar arguments and from (\ref{jyt0911}), we have
\begin{equation}
\left |M_3  \right |\leq C_{29}\left |   \log  dist \, (C_1^\Gamma, C_2^\Gamma)   \right |^{-\frac{ s^2}{(s+2)(n+ns+2s)}}.
\end{equation}
From (\ref{zuno1}), (\ref{ineq}), (\ref{phiiiii111}) and as was estimated (\ref{two2}) and (\ref{300}), we get
\begin{equation}\label{mmmmmzqa12}
\left |M_4 + M_5  \right |\leq C_{30}\,\tau^{-s/(s+2)}.
\end{equation}
We conclude the proof of this Proposition by replacing (\ref{mmmmmzqa1})-(\ref{mmmmmzqa12}) into (\ref{mmmmmzqa123}).
\end{proof}

\subsection{Proof of Theorem \ref{SEP}}

The proof of Theorem \ref{SEP} is standard and similar to the proof of Theorem \ref{SMP}.
By Proposition \ref{ggggg5}, taking into account that the constant $C>0$ in the estimate (\ref{final2}) is independent of $\xi\in \bigcap_{l=1}^{n-1}E_{l}$ and since the set $\bigcap_{l=1}^{n-1}E_{l}$ is dense in $\mathbb{R}^n$, it follows that the following estimate
\begin{equation}\label{final21zswq}
\begin{aligned}
& \left |  \mathcal{F}\left[ \chi_{\Omega\cup\Omega^*}\widetilde{q_1}  \right] (\xi) -  \mathcal{F}\left[ \chi_{\Omega\cup\Omega^*}\widetilde{q_2}  \right] (\xi)     \right | \\
&  \leq C\left( e^{2\tau \kappa}  dist \, (C_1^\Gamma, C_2^\Gamma)  +\left |   \log  dist \, (C_1^\Gamma, C_2^\Gamma)   \right |^{-\frac{\theta s^2}{(s+2)(n+ns+2s)}}  \tau^{(s+4)/(s+2)}   \right.\\
&\qquad \qquad \qquad  \left. +  \tau^{-s/(s+2)} + e^{-4 \pi \epsilon^2 \tau^2 \frac{\left |\xi^\prime  \right |^2}{\left |  \xi \right |^2} }  + \epsilon^s\right),
\end{aligned}
\end{equation}
holds true for all $\xi\in \mathbb{R}^n$. Now consider $R\geq 1$ (which will be fixed later) and denote by $B_R(0)$ the open ball in $\mathbb{R}^n$ centered at $0$ of radius $R$. For convenience we denote $\widetilde{q}:=\chi_{\Omega\cup\Omega^*}(\widetilde{q_1}-\widetilde{q_2})$. By Plancherel's theorem we have
\begin{equation}\label{w3zsd}
\left \| \widetilde{q} \right \|_{L^{2}(\mathbb{R}^n)}^2 =  \int_{B_R(0)\setminus \left \{0  \right \}}\left | \mathcal{F}\left[ \widetilde{q} \right]  (\xi) \right |^2 d\xi    +  \int_{\mathbb{R}^n\setminus B_R(0)}\left | \mathcal{F}\left[ \widetilde{q} \right]  (\xi) \right |^2 d\xi.
\end{equation}
From (\ref{final21zswq}), we get 
\begin{align*}
 &   \int_{B_R(0)\setminus \left \{0  \right \}}\left | \mathcal{F}\left[ \widetilde{q} \right]  (\xi) \right |^2 d\xi\\
 & \leq C_1 R^n \left(   \tau^{-2s/(s+2)}+\epsilon^{2s}  +  e^{4\tau \kappa  }  dist \, (C_1^\Gamma, C_2^\Gamma)^{2}    \right.\\
 &\qquad \qquad  \qquad \left.  +\left |   \log  dist \, (C_1^\Gamma, C_2^\Gamma)   \right |^{-\frac{2\theta s^2}{(s+2)(n+ns+2s)}}  \tau^{2(s+4)/(s+2)}    \right)    \\
 & \quad + C_1\, \int_{B_R(0)\setminus \left \{0  \right \}} e^{-8 \pi \epsilon^2 \tau^2 \frac{\left |\xi^\prime  \right |^2}{\left |  \xi \right |^2} }  \\
 &  \quad \leq C_2\, R^n\left(  \tau^{-2s/(s+2)}  + e^{4\tau \kappa  }  dist \, (C_1^\Gamma, C_2^\Gamma)^{2} + \epsilon^{2s}+\epsilon^{-2} \tau^{-2}\right. \\
 & \qquad \qquad \qquad  \left.  + \left |   \log  dist \, (C_1^\Gamma, C_2^\Gamma)   \right |^{-\frac{2\theta s^2}{(s+2)(n+ns+2s)}}  \tau^{2(s+4)/(s+2)}      \right).
 \end{align*}
Thus, by equating $\epsilon^{2s}$ and $\epsilon^{-2}\tau^{-2}$, that is $\epsilon=\tau^{-1/(s+1)}$, we obtain
\begin{equation}\label{w1}
\begin{aligned}
&  \int_{B_R(0)\setminus \left \{0  \right \}}\left | \mathcal{F}\left[ \widetilde{q} \right]  (\xi) \right |^2 d\xi\\
&\leq C_3\, R^n\left(\tau^{-2s/(s+2)} +  e^{4\tau \kappa  }  dist \, (C_1^\Gamma, C_2^\Gamma)^{2} \right.\\
&\qquad \qquad \qquad\left.  + \left |   \log  dist \, (C_1^\Gamma, C_2^\Gamma)   \right |^{-\frac{2\theta s^2}{(s+2)(n+ns+2s)}}  \tau^{2(s+4)/(s+2)}      \right).
\end{aligned}
\end{equation}

We now turn to estimate the integral term on $\mathbb{R}^n\setminus B_R(0)$ from the right-hand side of (\ref{w3zsd}). By hypothesis, the functions $\chi_\Omega q_1$ and $\chi_\Omega q_2$ belong to the class of admissible electric potentials $\mathcal{Q}(\Omega, M, s)$. Hence, from Lemma \ref{heyw}, it follows that $\widetilde{q}\in \mathcal{Q}(\Omega\cup\Omega^*, 2M, s)$. In particular, $\widetilde{q}\in B^{2,\infty}_s(\mathbb{R}^n)$ and $\left \| \widetilde{q} \right \|_{B^{2,\infty}_s}\leq 2M$. By combining Proposition $10$ and Theorem $5$, both in \cite{St1}, we obtain the following chain of embeddings:   
\[
B^{2,\infty}_s(\mathbb{R}^n) \subset B^{2, 2}_{s/2}(\mathbb{R}^n) \subset H^{s/2}(\mathbb{R}^n).
\]
Hence, we deduce that $\widetilde{q}\in H^{s/2}(\mathbb{R}^n)$ and its norm in $H^{s/2}(\mathbb{R}^n)$ only depends on a priori bounds for the magnetic and electric potentials. Then, we have
\begin{equation}\label{w22}
\begin{aligned}
 & \int_{\mathbb{R}^n\setminus B_R(0)}\left | \mathcal{F}\left[ \widetilde{q} \right]  (\xi)\right |^2 d\xi\\
 &= \int_{\mathbb{R}^n\setminus B_R(0)}(1+\left | \xi \right |^2   )^{-s/2} (1+\left | \xi \right |^2   )^{s/2} \left | \mathcal{F}\left[ \widetilde{q} \right]  (\xi)\right |^2 d\xi \\
 & \leq  R^{-s}  \int_{\mathbb{R}^n\setminus B_R(0)}(1+\left | \xi \right |^2   )^{s/2} \left |  \mathcal{F}\left[ \widetilde{q} \right]  (\xi) \right |^2 d\xi \\
& \leq    R^{-s}  \left \|\widetilde{q}  \right \|^2_{H^{s/2}(\mathbb{R}^n)} \leq C_4\, R^{-s}.
\end{aligned}
\end{equation}
Thus, replacing (\ref{w1}) and (\ref{w22}) into (\ref{w3zsd}) we have that there exist two positive constants $C_5$ and $\tau_1$ such that the estimate
\begin{equation}\label{leypoma}
\begin{aligned}
&\left \| \widetilde{q} \right \|_{L^{2}(\mathbb{R}^n)}^2 \leq C_5\, \left( R^n \tau^{-2s/(s+2)} + R^{-s}+  R^n e^{4\tau \kappa  }  dist \, (C_1^\Gamma, C_2^\Gamma)^{2} \right.\\
&\qquad \qquad \qquad\left.  + R^n \left |   \log  dist \, (C_1^\Gamma, C_2^\Gamma)   \right |^{-\frac{2\theta s^2}{(s+2)(n+ns+2s)}}  \tau^{2(s+4)/(s+2)}      \right).
\end{aligned}
\end{equation}
holds true for all $\tau\geq \tau_1$. 
By equating the two first terms on the left-hand side of the above inequality, we take $R=\tau^{2s/((n+s)(s+2))}$. Moreover, there exist two positive constants $C_6$ and $\tau_2$ such that
\[
R^n=\tau^{2ns/((n+s)(s+2)) }  \leq C_6 e^{\tau \kappa}, \quad \tau\geq \tau_2.
\]
Hence, from (\ref{w1}) and (\ref{leypoma}) into (\ref{w3zsd}), we have
\begin{equation}\label{x3}
\begin{aligned}
& \left \| \widetilde{q} \right \|_{L^{2}(\mathbb{R}^n)}^2\leq C_7\left( \tau^{-4s/((n+s)(s+2))}+ e^{5\tau \kappa  }  dist \, (C_1^\Gamma, C_2^\Gamma)^{2}  \right.\\
&\qquad \qquad  \left. +  \left |   \log  dist \, (C_1^\Gamma, C_2^\Gamma)   \right |^{-\frac{2\theta s^2}{(s+2)(n+ns+2s)}}  \tau^{2(2ns+4n+s^2+4s)/((n+s)(s+2))}   \right).
\end{aligned}
\end{equation}
Now we consider a large enough $\tau>0$ such that
\[
  \left |   \log  dist \, (C_1^\Gamma, C_2^\Gamma)   \right |^{-\frac{2\theta s^2}{(s+2)(n+ns+2s)}}    \tau^{2(2ns+4n+s^2+4s)/((n+s)(s+2))}  
\] 
will be comparable with 
\[
\left |   \log  dist \, (C_1^\Gamma, C_2^\Gamma)   \right |^{-\frac{\theta s^2}{(s+2)(n+ns+2s)}} . 
\]
Hence, taking $\tau_0\geq \max \left \{\tau_1,\tau_2  \right \}$ such that $5\kappa\tau_0\geq 1$, it is easy to check that
\begin{equation}\label{3xs}
\tau:= \dfrac{1}{5\kappa}\left |   \log  dist \, (C_1^\Gamma, C_2^\Gamma)   \right |^{\frac{\theta s^2(n+s)}{2(n+ns+2s)(2ns+4n+s^2+4s)}}\geq \tau_0,
\end{equation}
whenever
\begin{equation}\label{lmjbkdsk}
 dist \, (C_1^\Gamma, C_2^\Gamma)  \leq e^{-(5\kappa\tau_0)^{\frac{2(n+ns+2s)(2ns+4n+s^2+4s)}{\theta s^2 (n+s)}}}.
\end{equation}
Thus, from (\ref{3xs}) it follows that 
\begin{equation}\label{zxp091}
 \tau^{-4s/((n+s)(s+2))} \leq C_8 \left |   \log  dist \, (C_1^\Gamma, C_2^\Gamma)   \right |^{-\frac{2\theta s^3}{(s+2)(n+ns+2s)(2ns+4n+s^2+4s)}}.
\end{equation}
From (\ref{3xs}), we have
\[
e^{5\tau \kappa  }  dist \, (C_1^\Gamma, C_2^\Gamma)^{2}=e^{5\tau\kappa - 2(5\tau\kappa)^{\frac{2(n+ns+2s)(2ns+4n+s^2+4s)}{\theta s^2 (n+s)}}} \leq  e^{-5\tau \kappa  }, 
\]
where we have used that $dist \, (C_1^\Gamma, C_2^\Gamma) \leq e^{-1}$. This fact can be easily  deduced from (\ref{lmjbkdsk}) and $5\kappa \tau\geq 1$. Then
\begin{equation}\label{zxp0912}
e^{5\tau \kappa  }  dist \, (C_1^\Gamma, C_2^\Gamma)^{2}\leq \frac{1}{5\tau \kappa}= \left |   \log  dist \, (C_1^\Gamma, C_2^\Gamma)   \right |^{\frac{-\theta s^2(n+s)}{2(n+ns+2s)(2ns+4n+s^2+4s)}}.
\end{equation}
By construction, the last term on the right-hand side of (\ref{x3}) satisfies 
\begin{equation}\label{zxp0913}
\begin{aligned}
& \left |   \log  dist \, (C_1^\Gamma, C_2^\Gamma)   \right |^{-\frac{2\theta s^2}{(s+2)(n+ns+2s)}}  \tau^{4(ns+2n+s+4)/((n+2)(s+2))} \\
&\qquad  \leq C_{10}\left |   \log  dist \, (C_1^\Gamma, C_2^\Gamma)   \right |^{-\frac{\theta s^2}{(s+2)(n+ns+2s)}}.
\end{aligned}
\end{equation}
By replacing (\ref{zxp091})-(\ref{zxp0913}) into (\ref{x3}), we obtain
\begin{equation}\label{msdbfdhjsfaen}
\left \| \widetilde{q} \right \|_{L^{2}(\mathbb{R}^n)}\leq C_{11}   \left |   \log  dist \, (C_1^\Gamma, C_2^\Gamma)   \right |^{-\frac{\theta s^3}{(s+2)(n+ns+2s)(2ns+4n+s^2+4s)}}.
\end{equation}
Now since $n\geq 3$, we have
\[
 n + \frac{n}{2}+1\leq 2n, \qquad 5n + \frac{9}{4} \leq 6n,
\]
an since  $s\in \left( 0, 1/2 \right)$, we get 
\begin{align*}
&\frac{\theta s^3}{(s+2)(n+ns+2s)(2ns+4n+s^2+4s)} \\
&\qquad \qquad  \geq  \dfrac{2\theta s^3}{5(n + \frac{n}{2}+1 )( 5n+ \frac{9}{4})}\geq \frac{\theta s^3}{30 n^2}.
\end{align*}
By replacing this inequality into (\ref{msdbfdhjsfaen}), we get
\[
\left \| \widetilde{q} \right \|_{L^{2}(\mathbb{R}^n)}\leq C_{11}   \left |   \log  dist \, (C_1^\Gamma, C_2^\Gamma)   \right |^{- \frac{\theta s^3}{30 n^2}}.
\]
Thus, we conclude the proof by taking $\lambda=\theta/{30}$, $C$ as follows
\[
C=\max\left\{(5\kappa\tau_0)^{\frac{2(n+ns+2s)(2ns+4n+s^2+4s)}{\theta s^2 (n+s)}}, C_{11} \right\}
\]
and finally taking into account that
\[
\left \| q_1-q_2 \right \|_{L^{2}(\Omega)}\leq \left \| \widetilde{q} \right \|_{L^{2}(\mathbb{R}^n)}.
\]

\section{Identifiability for the magnetic field and the electric potential}
\subsection{Proof of Theorem \ref{identibiafgteru}} We only give the main ideas to prove the identifiability for the magnetic field and electric potential since it is just the qualitative version of what we have proved in the previous sections. We consider $\rho_1$ and $\rho_2$ given by (\ref{jkw}). Now let $U_1, U_2\in H^1(\Omega)$ satisfying $\mathcal{L}_{A_1,q_1}U_1=0$ with $U_1|_{\Gamma_0}=0$ and $\mathcal{L}_{\overline{A_2},\overline{q_2}}U_2=0$ with $U_2|_{\Gamma_0}=0$. The existence of such functions are given by Proposition \ref{construcsolutions}, except that we replaced the estimates (\ref{prunoz})-(\ref{prcuatroz}) by the estimates from Remark \ref{remesbaxzq}, (\ref{prunozxa})-(\ref{prcuatrozxa}). Thus, since $C_{A_1, q_1}^\Gamma= C_{A_2, q_2}^\Gamma$, Corollary \ref{coAlid} ensures that
\[
 \int_{\Omega}  (A_1-A_2)\cdot \left( DU_1\overline{U_2} +  U_1 \overline{DU_2} \right) + \left( A_1^2-A_2^2+q_1-q_2 \right) U_1\overline{U_2} =0.
\]

From this integral identity we can prove the identifiability for the magnetic potentials, following the proof of the Proposition \ref{ggggg5}, applying the Riemann--Lebesgue lemma to the function $\chi_\Omega(A_1-A_2)e^{\Phi_1^\sharp+ \overline{\Phi_2^{\sharp^*}} }$ to estimate (\ref{lilax9})-(\ref{lilax10}) and taking into account Lemma \ref{KyUh} in order to remove the term $e^{\Phi_1+\overline{\Phi_2}}$ in the left-hand side of (\ref{removing}). At this point, since the Fourier transform is analytic, Proposition \ref{ggggg5} and (\ref{final2zq}) give us the following equality in the sense of the distributions in $\mathbb{R}^n$:
\[
d(\chi_{\Omega\cup\Omega^*}\widetilde{A_1})= d(\chi_{\Omega\cup\Omega^*}\widetilde{A_2}),
\]
which imply that $dA_1=dA_2$ in $\Omega$.\\

The proof of the identifiability for the electric potential is as follows. We consider the Hogde decomposition for $\chi_{\Omega\cup\Omega^*}(\widetilde{A_1}- \widetilde{A_2})$ in a ball $B$ satisfying $\Omega\cup\Omega^*\subset\subset B$. We also take into account the estimates from Remark \ref{remesbaxzq} and the Riemann--Lebesgue lemma applied to the function $\chi_{\Omega\cup\Omega^*}(\widetilde{q_1}-\widetilde{q_2})$. Finally, since the Fourier transform is analytic, Proposition \ref{electyc} and (\ref{final21zswq}) imply that
\[
\chi_{\Omega\cup\Omega^*}\widetilde{q_1}=\chi_{\Omega\cup\Omega^*}\widetilde{q_2},
\]
so we have $q_1=q_2$ in $\Omega$.

\section*{Acknowledgements} The author would like to thank Alberto Ruiz  for the very nice  discussions. For his support and encouragement during the preparation of this paper. I would also want to thank Pedro Caro for fruitful conversations, suggestions and comments which allowed us to improve our original results. I would also want to thank Mikko Salo for his hospitality during my research stay in Jyv\"askyl\"a and also for several nice conversations. This paper is part of my PhD. dissertation and it is supporting by the Project MTM$2011-28198$ of Ministerio de Econom\'ia y Competividad de Espa\~na.


\begin{thebibliography}{10} %
%











\bibitem{BU} 
A. Bukhgeim and G. Uhlmann, \textit{Recovering a potential from partial Cauchy data}, 
Comm. Partial Differential Equations \textbf{27} (2002), 653--668.



\bibitem{Ca11}
P. Caro, \textit{On an inverse problem in electromagnetism with local data: stability and uniqueness}, Inverse Probl. Imaging \textbf{5} (2011), 297--322.



\bibitem{CP} P. Caro and V. Pohjola, \textit{Stability estimates for an inverse problem for the magnetic Schr\"odinger operator}, International Math. Research Notices \textbf{21} (2015), 11083--11116.





\bibitem{CDSFR} 
P. Caro, D. Dos Santos Ferreira and A. Ruiz, \textit{Stability estimates for the Radon transform with restricted data and applications}, Adv. in Mathematics \textbf{267} (2012), 523--564.

\bibitem{CDSFR1} 
P. Caro, D. Dos Santos Ferreira and A. Ruiz, \textit{Stability estimates for the Calderon problem with partial data}, J. Differential Equations \textbf{260} (2016), 2457--2489.


\bibitem{Ch}
F. J. Chung, \textit{A Partial Data Result for the Magnetic Schr\"odinger Inverse Problem}, Analysis and PDE \textbf{7} (2014), 117--157.

\bibitem{DSFKSU0} D. Dos Santos Ferreira, C. E. Kenig, J. Sj\"strand and G. Uhlmann, \textit{On the linearized local Calder\'on problem}, Math. Res. Lett., \textbf{16} (2009), 955-970.




\bibitem{DSFKSjU} 
D. Dos Santos Ferreira,  C. E. Kenig, J. Sj\"ostrand and G.~Uhlmann, \textit{Determining a magnetic Schr\"odinger operator from partial Cauchy data}, Comm. Mathematical Physics \textbf{271} (2007), 467--488.


\bibitem{FRo}
D. Faraco and K. Rogers, \textit{The Sobolev norm of characteristic functions with applications to the Calder\'on Inverse Problem}, Quart. J. Math. \textbf{64} (2013), 133-147.


\bibitem{HW}
H. Heck and J. N. Wang, \textit{Stability estimates for the inverse boundary value problem by partial Cauchy data}, Inverse Problems \textbf{22} (2006), 1787--1796.

\bibitem{HW1}
H. Heck and J. N. Wang, \textit{Optimal stability estimate of the inverse boundary value problem by partial measurements}, Rend. Instit. Mat. Univ. Trieste \textbf{48} (2016), 1--15.



\bibitem{Is} 
V. Isakov, \textit{On uniqueness in the inverse conductivity problem with local data}, Inverse Probl. Imaging \textbf{1} (2007), 95--105.










\bibitem{KSU} 
C.E. Kenig, J. Sj\"ostrand and G. Uhlmann,  \textit{The Calder\'on problem with partial data}, 
Annals Math. \textbf{165} (2007), 567--591.


\bibitem{KLU} K. Krupchyk, M. Lassas and G. Uhlmann, \textit{Inverse problems with partial data for a magnetic Schr\"odinger operator in an infinite slab and on a bounded domain}, Commun. Math. \textbf{313} (2012), 87-126.


\bibitem{KU}
K. Krupchyk, G. Uhlmann,\textit{ Uniqueness in an inverse boundary problem for a magnetic Schr\"odinger operator with a bounded magnetic potential},  Comm. Math. Phys., \textbf{327} (2014), 993-1009.




\bibitem{Ma}
N. Mandache, \textit{Exponential instability in an inverse problem of the Schr\"odinger equation}, Inverse Problems, \textbf{17} (2001), 1435--1444.




\bibitem{NSU}
G. Nakamura, Z. Sun, G. Uhlmann, \textit{Global identifiability for an inverse problem for the Schr\"odinger equation in a magnetic field}, Math. Ann. \textbf{303} (1995), 377--388.


\bibitem{Po} L. Potenciano-Machado, \textit{Stability estimates for a magnetic Schr\"odinger operator with partial data}, preprint (2016).






\bibitem{SaM} 
M. Salo, \textit{Inverse problems for nonsmooth first-order perturbations of the Laplacian}, Ann. Acad. Sci. Fenn. Math. Diss. \textbf{139} (2004).


\bibitem{ST}
M. Salo and L. Tzou, \textit{Carleman estimates and inverse problems for Dirac operators}, Math. Ann. \textbf{344}(2009), 161--184.




\bibitem{St1}
E. Stein, \textit{singular Integrals and Differentiability Properties of Functions}, Princeton University Press (1970).


\bibitem{Sun}
Z. Sun, \textit{An inverse boundary value problem for the Schr\"odinger operator with vector potentials}, Trans. Amer. Math. Soc. \textbf{338} (1992), 953--969.

\bibitem{Tr}
H. Triebel, \textit{Function spaces in Lipschitz domains and on Lipschitz manifolds. Characteristic functions as pointwise multipliers.}, Revista Matem\'atica Complutense \textbf{15}, no.2 (2002).





\bibitem{Tz} 
L.~Tzou, \textit{Stability estimates for coefficients of the magnetic Schr\"odinger equation from full and partial boundary measurements}, Comm. Partial Differential Equations \textbf{33} (2008), 1911--1952. 

%
\end{thebibliography}
\end{document}